\newtheorem{alg}{Algorithm}
\newtheorem{thm}{Theorem}
\newtheorem{defn}{Definition}
\newtheorem{lem}{Lemma}
\newtheorem{prop}{Proposition}
\newtheorem{cor}{Corollary}
\newtheorem{remark}{Remark}
\newcommand{\unit}{\texttt{1}\!\!\texttt{l}}
\newcommand{\M}{\mathcal{M}}
\let\gplgaddtomacro\g@addto@macro
\gdef\gplbacktext{}%
\gdef\gplfronttext{}%
\numberwithin{equation}{section}
\author{Charles M. Elliott\footnotemark[1]~ and Hans Fritz\footnotemark[1]}
\title{On algorithms with good mesh properties for problems with moving boundaries based on the Harmonic Map Heat Flow and the DeTurck trick}
\date{}
\begin{document}
\maketitle
\renewcommand{\thefootnote}{\fnsymbol{footnote}}
\footnotetext[1]{Mathematics Institute, Zeeman Building, University of Warwick, Coventry. CV4 7AL. UK\\ 
C.M.Elliott@warwick.ac.uk, hans.fritz@ur.de}
\abstract{
In this paper, we present a general approach to obtain numerical schemes with good mesh properties
for problems with moving boundaries, that is for evolving submanifolds with boundaries. 
This includes moving domains and surfaces with boundaries.
Our approach is based on a variant of the so-called the DeTurck trick.
By reparametrizing the evolution of the submanifold via solutions 
to the harmonic map heat flow of manifolds with boundary, we obtain a new velocity field for the motion 
of the submanifold. Moving the vertices of the computational mesh according to this velocity field
automatically leads to computational meshes of high quality both for the submanifold 
and its boundary. Using the ALE-method in \cite{ES12}, this idea can be easily built into algorithms
for the computation of physical problems with moving boundaries. 
}\\

\textbf{Key words.} 
Moving boundary, surface finite elements, mesh improvement, harmonic map heat flow, DeTurck trick
\\

\textbf{AMS subject classifications.} 65M50, 65M60, 35R01, 35R35
\\

\section{Introduction}
\subsection{Background}

Developing efficient methods for solving boundary values problems in complex domains 
is one of the main topics in Numerical Analysis. Often such problems arise from physical applications
for which it is quite natural that the boundary changes in time.  
The time-development of the boundary might be given explicitly.
In other cases, only the evolution law for the motion of the boundary might be known.
In any of these cases, the computational task is to solve a PDE
on a moving domain bounded by some time-dependent boundary. 
Similar problems might appear in curved spaces.
Then the task is to solve a PDE on a moving surface with boundary.

In the following, we will consider the most general case, that is an evolving submanifold
of arbitrary dimension
with boundary embedded into some higher-dimensional 
Euclidean space. While from an analytical point of view 
such problems are already more demanding than problems on
stationary submanifolds, their numerical treatment is even more involved.

Due to the nature of a computer, an approximation to a solution of a PDE can only be described
by a finite set of numbers. This entails that numerical solutions are not
defined on the actual submanifold but on a computational mesh, which we here assume
to be the union of some simplices. The approximation to the solution of the original problem
is then determined by its values on the vertices of the simplicial mesh. The fact that the
computational domain is, in general, not equal to the original submanifold --
in particular, they will usually have different boundaries -- certainly affects
the quality of the approximation of the solution and is sometimes labelled as a variational crime.

The most natural way to think of a computational mesh for an evolving submanifold
would certainly be a mesh changing in time. Alternatively, it might be possible to formulate
some sophisticated extension of the original problem 
to a time-independent ambient domain of the submanifold, which then
would enable to compute the solution on a stationary mesh. However, since 
such approaches certainly have their own difficulties, we will here fully neglect this possibility. 
This means that in the following we will only consider moving meshes. 
Of course, a moving mesh here only means a finite number of meshes that 
approximate the evolving submanifold with boundary at different discrete time levels.
The problem, which then arises, is to find a method to construct such a family of meshes.
A rather ad hoc approach would be to directly construct a mesh for the submanifold at each
discrete time level. However, this would require that the submanifold is explicitly known
at each time level. Furthermore, one would have to define a rule how to use the solution
from the previous time step defined on the previous mesh 
to compute the solution of the next
time step on the new mesh. If the new mesh is given by a deformation of the previous mesh
such a rule can be easily implemented. We therefore arrive at the task to deform a mesh
efficiently in such a way that the variational crime remains small.

If the motion of the submanifold is given by a velocity field which is either 
explicitly known or defined implicitly by the solution of
some other problem, the easiest way to deform a mesh would be to move the mesh vertices
according to this velocity field. Unfortunately, this would, in general, lead to mesh degenerations,
that is to the formation of meshes with very sharp simplices. However, it is well known that 
on such meshes the approximation of the solution to a PDE is very bad.
The idea of this paper is therefore to change the original velocity field in such a way
that firstly the shape of the submanifold is not changed and secondly mesh degenerations
are prevented. Certainly, the optimum would be to have a velocity field that can even be used to
improve the quality of the mesh. 
Indeed, it turns out that there is a quite general way
based on a PDE approach for this problem.
As we will present here, this approach is practical and 
can be easily built into numerical schemes for solving
PDEs on evolving submanifolds.
In the following, a triangulation is called a good mesh if the quotient of the diameter of a simplex and of the radius of the
largest ball contained in it is reasonably small for all simplices of the triangulation, see also
definition (\ref{definition_sigma_max}) below.

\subsection{Our approach}

Our approach is based on a variant of an idea that was originally introduced 
to prove short-time existence and uniqueness for some geometric PDEs such as the Ricci flow 
(see in \cite{CLN06, DeT83, Ham95}) and the
mean curvature flow (see in \cite{Ba10, Ham89}) on closed manifolds, 
that is on compact manifolds without boundary.
This idea, which is nowadays called the DeTurck trick, uses solutions to the harmonic map
heat flow on manifolds without boundary (see \cite{ES64}) in order to reparametrize the 
evolution of the curvature flows. This then leads to new PDEs for the reparametrized flows.
The advantage is that in contrast to the original PDEs, the reparametrized PDEs are strongly
parabolic. 
We recently showed in \cite{EF15} that this trick is also quite useful in Numerics.
A well-established method to compute the mean curvature flow (see \cite{Dz91}), 
that is the deformation of an embedded hypersurface
into the negative direction of its mean curvature vector, is based on
evolving  surface finite elements; see \cite{DE13}. Although this method is very appealing,
since it gives direct access to the evolving surface and is very efficient at the same time,
a big disadvantage was until recently that it often leads to mesh degenerations. We have tackled
this problem in \cite{EF15} 
by using a reformulation of the mean curvature flow based on the DeTurck trick.
As we have demonstrated in numerical experiments, this approach indeed leads to schemes
which prevent the mesh from degenerating.

In this paper, we aim to extend this idea to evolving submanifolds with boundary.
In order to keep our approach as general as possible, we here do not assume
that the motion of the submanifold is determined by a special PDE.
Instead, we just assume that the submanifold moves according to some velocity field,
which can be given either explicitly or implicitly. 
Moreover, we assume that the submanifold is given as the image of a time-dependent embedding
of some reference manifold with boundary.
For the DeTurck trick we will apply
the harmonic map heat flow on manifolds with boundary.
This flow has been used in \cite{Ham75} to prove existence of harmonic maps between Riemannian
manifolds with boundary. 
We will consider this flow on the reference manifold of the evolving submanifold.
In contrast to the setting in \cite{EF15}, where only closed
manifolds were considered, we now have to choose boundary conditions for the harmonic
map heat flow. In fact, this is already a delicate issue. For example, for pure Dirichlet boundary
conditions the harmonic map heat flow would be fixed on the boundary.
However, a reparametrization of the evolving submanifold by the harmonic map heat flow
would then not change the velocity field on the boundary of the evolving submanifold.
Hence, the computational mesh would then, in general, behave badly close to its boundary.
On the other hand, Neumann boundary conditions cannot be applied, since to generate a reparametrization
we need the harmonic map heat flow to map the boundary onto itself. 
It thus turns out that we have to apply some mixed boundary conditions for our purpose.
In \cite{Ham75} it is discussed why existence of the harmonic map heat flow subject to such 
boundary conditions can only be ensured if the boundary of the manifold satisfies some geometric 
constraints. We therefore have to choose the reference manifold very carefully. In fact, it turns 
out that there are good reasons to use a curved reference manifold even for flat submanifolds
such as moving domains in $\mathbb{R}^n$. It is therefore quite natural to formulate our approach in
a rather geometrical setting.

\subsection{Related work}
Finding good meshes for computational purposes 
is a research field that has been studied for quite a long time,
see for example \cite{Wi66}.
For stationary surfaces, a reasonable way of addressing this problem is to compute good parametrizations 
such as conformal parametrizations. Methods to compute conformal surface parametrizations
can be, for example, found in \cite{HATKSH00, JWYG04}.
In \cite{CLR04}, the authors relax the idea of using conformal parametrization.
A common feature of these approaches is that they were originally designed for stationary surfaces.
One possibility to transfer 
them to evolving surfaces is to apply the reparametrization on the discrete time levels.
For example, in \cite{CD03, St14} harmonic maps are used for the remeshing of closed moving surfaces.
Another approach for closed evolving manifolds based on elliptic PDEs was recently suggested 
in \cite{MRSS14}. However, we believe that the remeshing of evolving submanifolds 
should rather be based on reparametrizations by solutions to parabolic equations.

Remeshing schemes based on solutions to the harmonic map heat flow 
have also been used within the $r$-refinement (relocation refinement)
moving mesh method; see \cite{Hu01, HR99}.
In contrast to $hp$-methods, where the computational mesh is locally refined or coarsened
based on \textit{a posteriori} error estimates in order to obtain PDE-solutions within
prescribed error bounds, the $r$-refinement moving mesh method follows a different path
to get the smallest error possible for a fixed number of mesh vertices.
The idea behind this method is to move vertices to those regions where the PDE-solution
has "interesting behaviour"; see \cite{BHR09, HR11} for recent surveys of the method.  
This is achieved by mapping a given mesh for some reference domain, 
which is called the \textit{logical} or \textit{computational
domain} in this instance, into the \textit{physical domain} in which the underlying PDE
is posed in such a way that the associated map satisfies some moving mesh equations.
These equations depend on the underlying PDE via 
a so-called \textit{monitor function} which is specially designed to
guide the positions of the mesh vertices. 
One example of a moving mesh equation is
a gradient flow equation of an adaptation functional, which includes the energy of a harmonic 
mapping; see, for example, \cite{Hu01, HR99}. This approach is then 
called the moving mesh PDE method (MMPDE).
It is based on results developed in \cite{Dv90}, where harmonic maps on Riemannian manifolds are used for 
the generation of solution adaptive grids. 

Despite the formal similarities between
the MMPDE and the approach presented in this paper, such as the use of the
harmonic map heat flow, there are some crucial differences between both methods.
Firstly, the objectives of both methods are different. While the MMPDE aims to adapt the mesh
to a solution of some underlying PDE, 
the objective of our approach is to provide a good mesh for evolving submanifolds
undergoing large deformations, where the submanifold is allowed to have any dimension
or codimension. We here consider special boundary conditions
which will enable us to obtain high-quality meshes also at the boundary of the evolving submanifold
by solving just one equation!
This is in contrast to the MMPDE, where the point distribution at the boundary is often
obtained by some lower dimensional MMPDE for the boundary mesh; see the discussion in \cite{Hu01} and Section $5$ in \cite{MMNI15}. 
In order to apply our boundary conditions, we have to choose the reference submanifold
very carefully; see Section \ref{Section_numerical_examples}, where the reference manifolds are a half-sphere or a cylinder.
In contrast, the logical domain in the MMPDE is 
usually some rectangular domain. However, this does not mean that our approach is restricted in any way. It just means that we have to include curved geometries in our approach. 
Using the evolving surface finite element method (see \cite{DE13}), this can be realized very easily.
A crucial property of our approach is that it is not necessary to solve the harmonic map heat flow explicitly. 
Instead, it is sufficient to compute a reparametrized evolution equation for the motion of the evolving submanifold.
If the original velocity of the submanifold is known explicitly, this can be done by just inverting two mass matrices!
Our method is therefore computationally very cheap.
Moreover, our approach does not depend on a solution to another PDE, since  we use the Riemannian metric
determined by the embedding of the submanifold into some Euclidean space
and not by the solution to another PDE like in the MMPDE method. 
Since our method also makes use of mesh refinement and coarsening (see Algorithm 
\ref{algo_refinement_and_coarsening_strategy}), it is clearly not in the spirit of an
$r$-refinement method.

In \cite{MMNI15}, the MMPDE method was recently used for the generation of bulk and surface meshes in order to solve 
coupled bulk-surface reaction-diffusion equations on evolving two-dimensional domains.
Such problems occur, for example, in the modelling of cell migration and chemotaxis.
The mesh algorithm in \cite{MMNI15} is based on two moving mesh PDEs
-- one for the boundary and one for the interior of the domain. More precisely, the idea in \cite{MMNI15} is to use the
updated boundary points from the solution to the boundary problem as Dirichlet data for the MMPDE method applied to the interior mesh points.
Since we only make use of one PDE, that is the harmonic map heat flow with mixed boundary conditions,
the here presented method is clearly different from the approach in \cite{MMNI15}.

\subsection{Outline of the paper}
This paper is organised as follows. In Sections \ref{Section_setting} and \ref{Section_notations},
we introduce the formal setting of our approach and recall some basic facts from differential geometry.
In particular, we describe the evolving submanifold by a time-dependent embedding of some
fixed reference manifold with boundary into an Euclidean space.
This will be the framework for our further analysis.
We then present the harmonic map heat flow for manifolds with boundary in Section \ref{Section_harmonic_map_heat_flow}. 
We will consider the case of mixed boundary conditions. 
This means that the harmonic map heat flow is assumed to map the boundary of the reference manifold 
onto itself. However, the map is allowed to change in 
the tangential direction of the boundary. 
These boundary conditions imposed on the harmonic map heat flow on the boundary of the
reference manifold are the reason why we will obtain tangential 
redistributions of the mesh vertices on the boundary of the evolving submanifold
in our remeshing algorithm.
It is very important that the redistribution of the mesh vertices on the boundary only takes place 
in the tangential direction of the boundary
in order to ensure that the shape of the evolving submanifold is not changed by the remeshing method. 
We reparametrize the motion of the submanifold with boundary by the solution
to the harmonic map heat flow with boundary. This is done in Section \ref{Section_reparametrization}. 
This leads to a new velocity field for the motion of the reparametrized submanifold. 
In Section \ref{Section_weak_formulation}, a weak formulation is provided.
Since tangential gradients on submanifolds can be discretized quite naturally, we will reformulate our results
using tangential gradients in Section \ref{Section_reformulation_tangential_gradients}.
For the spatial discretization, we define appropriate finite element spaces in Section \ref{Section_finite_elements}.
In Section \ref{Section_discrete_problems},
we discretize the weak formulation based on tangential gradients and obtain a numerical scheme for the motion of the computational mesh.
In this scheme, the mesh vertices are
moved according to the reparametrized velocity field. 
A natural side effect of our approach is 
that the area of the mesh simplices tends to decrease or increase
non-homogeneously. We take this problem into account 
by introducing a refinement and coarsening strategy, which makes sure that the mesh simplices
have approximately a similar size. 
Since refinement and coarsening change the mesh quality only slightly, this step
does not affect the potential of the whole approach.
Details on the implementation of our novel scheme are given in Section \ref{Section_implementation}.
In Section \ref{Section_numerical_examples}, we present
numerical experiments which demonstrate the performance of our scheme to produce meshes of high quality in different settings. 
We show that our algorithm can be easily adapted to solve different problems with moving boundaries.
The paper ends with a short discussion of our results in Sections \ref{section_discussion}.

\section{Reparametrizations via the DeTurck trick}
\label{reparametrizations_via_the_DeTurck_trick}
\subsection{The setting}
\label{Section_setting}
Let $\Gamma(t) \subset \mathbb{R}^{n}$, $0 \leq t < T$, be a smooth family of $(n-d)$-dimensional, 
compact submanifolds with boundary in the Euclidean space $\mathbb{R}^n$,
and let $\Omega := \bigcup_{t \in [0,T)} \Gamma(t) \times \{ t \}$ 
be the corresponding space-time cylinder.
Here, $d \in \mathbb{N}_0$ denotes the co-dimension of the submanifold.
Without loss of generality, we can assume in this paper that $\Gamma(t)$ is (path-)connected. 
For example, if $d=0$, then $\Gamma(t)$ is the closure of some bounded domain $U(t) \subset \mathbb{R}^n$,
and if $d=1$, then $\Gamma(t)$ is a compact and connected hypersurface with boundary. The Euclidean metric
$\mathfrak{e}$ in $\mathbb{R}^n$
induces a metric on $\Gamma(t)$ which we denote by $e(t)$.

We now make the assumption that $\Gamma(t)$ is given as the image of 
a smooth, time-dependent embedding 
$x: \mathcal{M} \times [0,T) \rightarrow \Omega$ 
with $\Gamma(t) = x(\M,t)$ and $\partial \Gamma(t) = x(\partial \M,t)$, where $(\M,m)$ is an $(n-d)$-dimensional,
compact and connected smooth Riemannian manifold with boundary $\partial \M$. 
The manifold $\M$ is called the reference manifold of the problem.
The time-independent metric $m$ on $\M$ is arbitrary yet fixed.
We call $m$ the background metric in order to distinguish it from the
pull-back metric $g(t) := x(t)^\ast \mathfrak{e}$ on $\M$
induced by the embedding $x(t)$.
See Table \ref{list_of_symbols} for an overview of symbols used in the text.

The motion of the evolving submanifold $\Gamma(t)$ is described by the velocity field 
$v: \Omega \rightarrow \mathbb{R}^n$ given by
\begin{equation}
	v \circ x = x_t.
	\label{equation_of_motion_non-reparametrized}
\end{equation}

The embedding $x$ can be replaced by every reparametrization of the form
$\hat{x}:= x \circ \psi^{-1}$ without changing the space-time cylinder $\Omega$.
Here, $\psi: \M \times [0,T) \rightarrow \M$ 
denotes an arbitrary smooth family of diffeomorphisms on $\M$ 
with $\partial \M = \psi(\partial \M, t)$ for all $t \in [0,T)$. 
The velocity field 
$\hat{v}: \Omega \rightarrow \mathbb{R}^n$ of the reparametrization, that is
$\hat{v} := \hat{x}_t \circ \hat{x}^{-1}$, satisfies 
\begin{align*}
	\hat{v} \circ \hat{x} &= (x \circ \psi^{-1})_t 
	= x_t \circ \psi^{-1} + (\nabla x \circ \psi^{-1})(\psi^{-1})_t
	\\
	&=  v \circ x \circ \psi^{-1} + (\nabla x \circ \psi^{-1})(\psi^{-1})_t
	\\
	&=  v \circ \hat{x} + (\nabla x \circ \psi^{-1})(\psi^{-1})_t .
\end{align*}
Here, $\nabla x$ denotes the differential of the embedding $x$.
In local coordinates $\mathcal{C}$, it is given by 
$(\nabla x) \circ \mathcal{C} \big(\frac{\partial \mathcal{C}^{-1}}{\partial \theta^j} \big) = 
\frac{\partial X}{\partial \theta^j}$, where $X := x \circ \mathcal{C}^{-1}$.
Using the identities
\begin{align*}
	& (\psi^{-1})_t \circ \psi = - ( \nabla \psi^{-1} \circ \psi ) \psi_t,
	& \nabla \hat{x} = ( \nabla x \circ \psi^{-1} ) \nabla \psi^{-1},
\end{align*}
we conclude that 
$(\nabla x \circ \psi^{-1})(\psi^{-1})_t = - \nabla \hat{x} (\psi_t \circ \psi^{-1})$,
and hence,
$$
	\hat{v} \circ \hat{x} = v \circ \hat{x} - \nabla \hat{x} (\psi_t \circ \psi^{-1}).
$$
The reparametrized velocity field $\hat{v}$ depends on the time-derivative of
the reparametrization $\psi$. It is therefore an interesting question whether there is 
a general class of reparametrizations $\psi$ that lead to advantageous velocities $\hat{v}$
in the following sense: The easiest way to do computations on evolving submanifolds would
be to move the computational mesh according to the velocity field $v$. 
However, in general, this would lead to a degeneration of the mesh almost immediately.
This problem remains even if the problem is solved on the reference manifold
$\M$. Instead of mesh degenerations, one would then have to handle an induced metric $g(t)$
which becomes singular -- at least from a computational perspective. 
It would therefore be a big advantage in numerical simulations to have a velocity field
that does not lead to mesh degenerations
when it is used to move the mesh vertices. 
If such a velocity field is based on a reparametrization like above,
it does not change the space-time cylinder $\Omega$.  
This leads to the problem to find a good family of reparametrizations $\psi(t)$.
\begin{remark}
In applications, either the embedding $x$ or the velocity field $v$ might be given.
In the latter case, the embedding $x$ can be determined by solving the system of 
ordinary differential equations (\ref{equation_of_motion_non-reparametrized}) for a given initial embedding $x(\cdot, 0) = x_0(\cdot)$.
Note that the velocity field $v$ defines the parameterisation $x(t)$ as well as the domain $\Gamma(t)$. 
It does not necessarily correspond to some physical velocity. 
Often, examples with given velocity fields are free and moving boundary problems. 
\end{remark}

\subsection{Further notations}
\label{Section_notations}
Henceforward, the components of an arbitrary metric tensor $h$ 
with respect to some coordinate chart of an $(n-d)$-dimensional manifold
are denoted by $h_{ij}$ for $i,j= 1, \ldots, n-d$. The components of the inverse of the matrix 
$(h_{ij})_{i,j = 1 , \ldots, n-d}$ are denoted by $h^{ij}$ for $i,j= 1, \ldots, n-d$.
We here make use of the convention to sum over repeated indices.
The Christoffel symbols with respect to the metric $h$
are defined by
$$
\Gamma(h)^k_{ij} := \frac{1}{2} h^{km} \left( 
	\frac{\partial h_{mj}}{\partial \theta^i} + \frac{\partial h_{mi}}{\partial \theta^j}
	- \frac{\partial h_{ij}}{\partial \theta^m}
\right).
$$
The gradient $grad_{h} f$ of a differentiable function $f$ on a Riemannian manifold 
with respect to the metric $h$ is 
defined by  
$h(p)(grad_{h} f(p), \xi) := (\nabla f)(p)(\xi)$ for all tangent vectors $\xi$ at $p$.
In local coordinates, we have
\begin{align*}
	&	\big((grad_{h} f) \circ \mathcal{C}^{-1} \big)^\kappa 
	= h^{\kappa \sigma} \frac{\partial F}{\partial \theta^\sigma},
\end{align*} 
where $F := f \circ \mathcal{C}^{-1}$ and $\mathcal{C}$ is a local coordinate chart.
The Laplacian of a twice differentiable function $f$ with 
respect to the metric $h$ is defined by
$$
	( \Delta_{h} f ) \circ \mathcal{C}^{-1} := h^{\iota \eta} 
	\left( \frac{\partial^2 F}{\partial \theta^\iota \partial \theta^\eta} 
	- \Gamma(h)^\rho_{\iota \eta} \frac{\partial F}{\partial \theta^\rho} \right)
	= \frac{1}{\sqrt{\det (h_{\alpha \beta})}} \frac{\partial}{\partial \theta^\iota}
	\left( \sqrt{\det (h_{\alpha \beta})} h^{\iota \eta} \frac{\partial F}{\partial \theta^\eta} \right).
$$
The map Laplacian $\Delta_{g, m}$ of a
map $\psi: (\M, g) \rightarrow (\M, m)$ with respect to the metrics $g$ and $m$
is defined by
\begin{align}
	\left( \mathcal{C}_2 \circ (\Delta_{g,m} \psi) \circ \mathcal{C}_1^{-1} \right)^\kappa
	:= g^{ij} \left( \frac{\partial^2 \Psi^\kappa}{\partial \theta^i \partial \theta^j} 
		- \Gamma(g)^k_{ij} \frac{\partial \Psi^\kappa}{\partial \theta^k}
		+ \Gamma(m)^\kappa_{\beta \gamma} \circ \Psi \frac{\partial \Psi^\beta}{\partial \theta^i}
			\frac{\partial \Psi^\gamma}{\partial \theta^j}	
	\right),
	\label{map_laplacian_in_coordinates}
\end{align}
where $\mathcal{C}_1, \mathcal{C}_2$ are two coordinate chats of $\M$, 
and $\Psi := \mathcal{C}_2 \circ \psi \circ \mathcal{C}_1^{-1}$; see, for example, in \cite{CLN06}.
The indices $i,j,k$ refer to the chart $\mathcal{C}_1$, whereas $\kappa, \beta, \gamma$
refer to $\mathcal{C}_2$.  

\begin{table}
\small
\begin{center}
\begin{tabular}{|l | l|}
\hline
 $(\M,m)$    & reference manifold with fixed background metric $m$ \\
 $\Gamma(t) \subset \mathbb{R}^n$ & moving $(n-d)$-dimensional submanifold \\
 $x: \M \times [0,T) \rightarrow \Gamma(t)$ & embedding of $\M$ \\
 $\hat{x}(t) := x(t) \circ \psi(t)^{-1}$  & reparametrization of the embedding $x$ \\
 $\hat{y}(t) := \hat{x}(t)^{-1}$ & inverse of the embedding $\hat{x}$ \\
 $\psi: \M \times [0,T) \rightarrow \M$ & solution to the harmonic map heat flow \\
 $u(t): \Gamma(t) \rightarrow \Gamma(t)$ & identity function on $\Gamma(t)$ \\
 $\mathfrak{e}$ & Euclidean metric in the ambient space \\
 $e(t)$ & metric on $\Gamma(t)$ induced by the Euclidean metric\\
 $\hat{h}(t) := \hat{y}(t)^\ast m$ & pull-back metric on $\Gamma(t)$ \\
 $g(t) := x(t)^\ast \mathfrak{e}$,  $\hat{g}(t) := \hat{x}(t)^\ast \mathfrak{e}$ 
 & pull-back metrics on $\M$ \\
 $\nu(t)$ & unit co-normal vector field to $\partial \Gamma(t)$ with respect to $e(t)$\\
 $\mu(t)$ & unit co-normal vector field to $\partial \M$ with respect to $g(t)$ \\ 
 $\lambda$ & unit co-normal vector field to $\partial \M$ with respect to $m$ \\
\hline 
\end{tabular}
\caption{List of symbols}
\label{list_of_symbols}
\end{center}
\end{table}

The boundary $\partial \M$ of a Riemannian manifold $(\M,m)$ is called totally geodesic 
if any geodesic on the submanifold $\partial \M$ with respect to the metric induced by $m$ is also a geodesic in $(\M, m)$. 
This is equivalent to the fact that a geodesic $\gamma: (- \epsilon, \epsilon) \rightarrow \M$
in $(\M,m)$ with $\gamma(0) \in \partial \M$ and $\gamma'(0)$ tangential to $\partial M$
stays in $\partial M$.  
A simple class of such manifolds are given
by the $(n-1)$-dimensional half-spheres
\begin{equation}
	\mathbb{H}^{n-1} 
	:= \bigg\{ x \in \mathbb{R}^{n} \; | \; \sum_{j=1}^{n} x_j^2 = 1
	\; \textnormal{and} \; x_{1} \geq 0 \; \bigg\},
	\label{half_sphere_definition}
\end{equation}
with the metric induced by the Euclidean metric of the ambient space. The boundary
$$
	\partial \mathbb{H}^{n-1} 
	:= \bigg\{ x \in \mathbb{R}^{n} \; | \; \sum_{j=1}^{n} x_j^2 = 1
	\; \textnormal{and} \; x_{1} = 0 \; \bigg\}
$$
with respect to this metric is totally geodesic.

\subsection{The harmonic map heat flow on $\M$ }
\label{Section_harmonic_map_heat_flow}
In the following, we choose $\psi(t)$ to be the solution to the harmonic map heat flow
of manifolds with boundary. We will see that for this choice, a computational mesh 
of high quality is automatically generated by moving the mesh vertices 
according to the new velocity field $\hat{v}$. 
This is demonstrated in Section \ref{Section_numerical_examples} by numerical experiments.
To be precise we seek  $\psi$ solving the following initial-boundary value problem for the harmonic map heat flow 
\[
\left.
\begin{aligned}
	& \psi_t = \tfrac{1}{\alpha} \Delta_{g(t), m} \psi \quad 
	\textnormal{in $\M \times (0,T)$,}
\end{aligned}
\right\} =: (HMF) 
\]
with $g(t) = x(t)^\ast \mathfrak{e}$ and mixed boundary conditions
\[
\left.
\begin{aligned}	
	& \psi(\cdot, 0) = id(\cdot)
	& \textnormal{on $\M$}, \\
	& \nabla_{\mu(t)} \psi \perp_{m} \partial \M  
	& \textnormal{on $\partial \M \times (0,T)$}, \\
	& \psi(\partial \M, t) \subset \partial \M 
	& \textnormal{for all $t \in [0,T)$.}
\end{aligned}
\right\} =: (BC)
\]
Here, $\mu(t)$ denotes a unit co-normal vector field on $\partial \M$ with respect to the metric $g(t)$.
The second condition says that the normal derivative $\nabla_{\mu(t)} \psi$ is supposed to be
perpendicular to the boundary of $\M$ 
with respect to the metric $m$. 

\begin{itemize}
\item
We have introduced the inverse diffusion
constant $\alpha > 0$ in order to control the size of the velocity 
$\nabla \hat{x}(\psi_t \circ \psi^{-1})$ in $\hat{v}$. It corresponds to having differing time scales for the 
reparametrization and for the evolution of the surface.
This is important in applications, in particular, 
if the submanifold $\Gamma(t)$ moves very fast and the time scale $\alpha$, on which the 
redistribution of the mesh nodes takes place, has to be very small.
\item
The reason for using the mixed boundary conditions $\nabla_{\mu(t)} \psi \perp_{m} \partial \M$
and $\psi(\partial \M, t) \subset \partial \M$ is that these conditions ensure that
the boundary of $\M$ is mapped onto itself -- which would not be the case for
Neumann boundary conditions -- and that simultaneously, this map is flexible 
-- which would not be true for pure Dirichlet boundary conditions.
The latter point is crucial in order to obtain good submeshes at the boundary of $\Gamma(t)$.
\end{itemize}

\subsection{The reparametrization of the embedding}
\label{Section_reparametrization}
\begin{figure}
\begin{center}
\includegraphics[width = \textwidth]{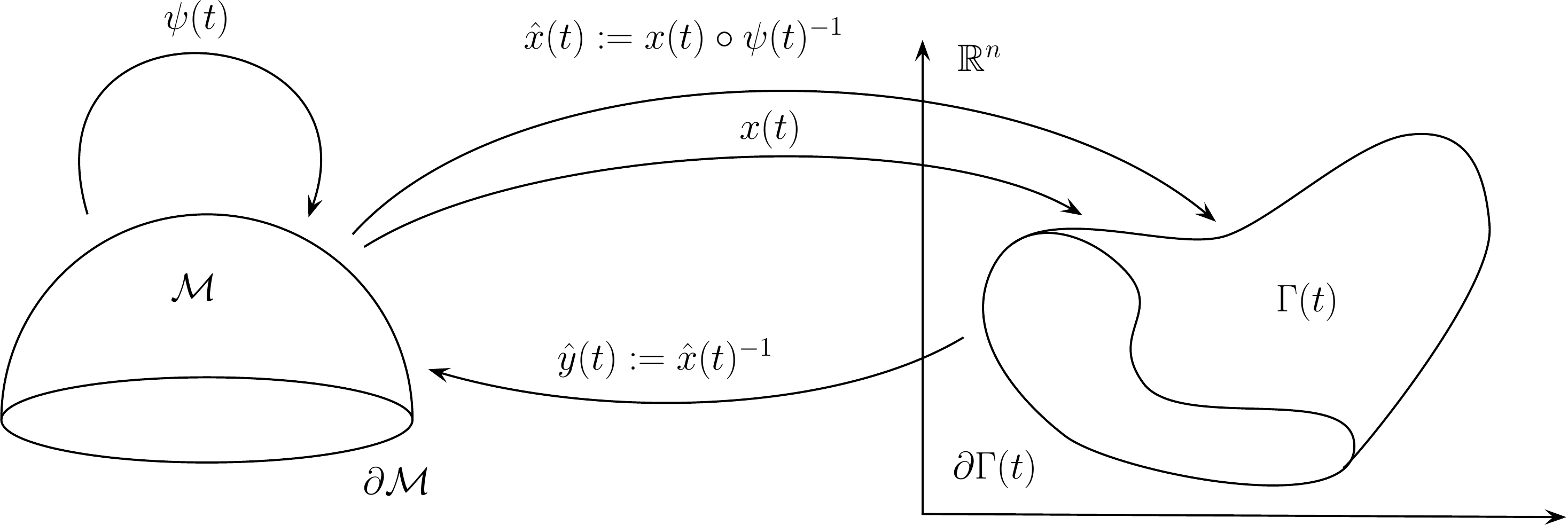}
\end{center}
\caption{Schematic picture of the reparametrization of the time-dependent embedding $x(t)$ by the solution $\psi(t)$ of the harmonic map heat flow
$(HMF)$. $\M$ is the reference manifold and $\Gamma(t) := x(\M,t)$ is the moving submanifold for which we aim to find a good computational mesh.}
\end{figure}
\begin{prop}
\label{Prop_reparametrization}
Suppose that $\psi(t): \M \rightarrow \M$ with $0 \leq t < T$ is a smooth family of diffeomorphism that solve 
the harmonic map heat flow $(HMF)$. Let $\Gamma(t) = x(\M,t)$ for $0 \leq t < T$ be a moving embedded submanifold in $\mathbb{R}^n$.  
The map $\hat{x}(t): \M \rightarrow \Gamma(t)$ for $0 \leq t < T$ defined as the pull-back 
$\hat{x}(t) := (\psi(t)^{-1})^\ast x := x(t) \circ \psi(t)^{-1}$ of the embedding $x(t)$
then satisfies the equation
\begin{align}
	& \hat{x}_t = v \circ \hat{x} - \tfrac{1}{\alpha} \nabla \hat{x}(w),
	\label{equation_for_reparametrized_motion} 
\end{align}
where $w$ is a tangent vector field on $\M$ whose components $W^k$ with respect to a coordinate chart $\mathcal{C}$,
that is $w \circ \mathcal{C}^{-1} = W^k \frac{\partial \mathcal{C}^{-1}}{\partial \theta^k}$,
are defined by
\begin{align}
	W^k := \hat{g}^{ij} \left( \Gamma(m)^k_{ij} - \Gamma(\hat{g})^k_{ij} \right).
	\label{w_in_local_coordinates}
\end{align}
Here, $\Gamma(\hat{g})^k_{ij}$ and $\Gamma(m)^k_{ij}$ denote the Christoffel symbols
with respect to the metrics $\hat{g}(t) := \hat{x}(t)^\ast \mathfrak{e}$ and $m$, respectively.
\end{prop}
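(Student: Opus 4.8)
The plan is to reuse the reparametrization computation already carried out in Section~\ref{Section_setting}. Since $\hat{x}=x\circ\psi^{-1}$, that computation literally evaluates $\hat{x}_t=(x\circ\psi^{-1})_t$ and yields $\hat{x}_t=v\circ\hat{x}-\nabla\hat{x}(\psi_t\circ\psi^{-1})$, where $\psi_t\circ\psi^{-1}$ is a bona fide tangent vector field on $\M$ (at $q\in\M$ its value $\psi_t(\psi^{-1}(q))$ lies in $T_{\psi(\psi^{-1}(q))}\M=T_q\M$, so $\nabla\hat x$ may be applied to it). Inserting the harmonic map heat flow equation $\psi_t=\tfrac1\alpha\Delta_{g(t),m}\psi$ of $(HMF)$ turns this into $\hat x_t=v\circ\hat x-\tfrac1\alpha\nabla\hat x(w)$ with $w:=(\Delta_{g(t),m}\psi)\circ\psi^{-1}$, a vector field on $\M$. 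Thus the entire content of the proposition is reduced to identifying the local components of $w$ with (\ref{w_in_local_coordinates}). Note that only the interior equation of $(HMF)$ enters; the boundary conditions $(BC)$ are irrelevant for this statement.

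The key structural observation for that identification is that $\psi$ is an isometry onto the reparametrized metric: from $\hat g=\hat x^\ast\mathfrak e=(\psi^{-1})^\ast x^\ast\mathfrak e=(\psi^{-1})^\ast g$ one gets $\psi^\ast\hat g=g$, i.e.\ $\psi:(\M,g(t))\to(\M,\hat g(t))$ is an isometry for each $t$. This lets me replace $\Delta_{g(t),m}\psi$ by the map Laplacian of an identity map. Concretely, I would fix a chart $\mathcal C$ of $\M$ and use the adapted chart $\mathcal C_1:=\mathcal C\circ\psi$ on the source in formula (\ref{map_laplacian_in_coordinates}); then the local representative $\Psi=\mathcal C\circ\psi\circ\mathcal C_1^{-1}$ is the identity of coordinate space, its second derivatives vanish and $\partial_i\Psi^\kappa=\delta^\kappa_i$, so (\ref{map_laplacian_in_coordinates}) collapses to $\tilde g^{ij}(\Gamma(m)^\kappa_{ij}-\Gamma(\tilde g)^\kappa_{ij})$, where $\tilde g_{ij}$ are the components of $g$ in the chart $\mathcal C_1$. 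By the isometry property and the choice $\mathcal C_1=\mathcal C\circ\psi$ these equal the components of $\hat g$ in $\mathcal C$, hence $\Gamma(\tilde g)^\kappa_{ij}=\Gamma(\hat g)^\kappa_{ij}$; and since $\mathcal C_1(p)$ corresponds to $\mathcal C(\psi(p))$, this expression is precisely the $\mathcal C$-representative of $w=(\Delta_{g(t),m}\psi)\circ\psi^{-1}$. So $W^\kappa=\hat g^{ij}(\Gamma(m)^\kappa_{ij}-\Gamma(\hat g)^\kappa_{ij})$, which is (\ref{w_in_local_coordinates}), and substituting back gives (\ref{equation_for_reparametrized_motion}).

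Equivalently, one may phrase the middle step as the naturality of the tension field under isometries of the domain, namely $\Delta_{g(t),m}\psi=(\Delta_{\hat g(t),m}\mathrm{id}_{\M})\circ\psi$, followed by the one-line coordinate computation $(\Delta_{\hat g(t),m}\mathrm{id}_{\M})^\kappa=\hat g^{ij}(\Gamma(m)^\kappa_{ij}-\Gamma(\hat g)^\kappa_{ij})$. The main (and essentially only) obstacle is this chart bookkeeping: choosing the source chart so that $\psi$ becomes the identity in coordinates, and then correctly tracking how the isometry of the second paragraph converts $g$-data into $\hat g$-data. Everything else is a matter of substituting identities already displayed in the text.
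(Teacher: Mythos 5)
Your proposal is correct and follows essentially the same route as the paper: compute $\hat{x}_t$ from the reparametrization identity of Section \ref{Section_setting}, substitute the flow equation $\psi_t=\tfrac1\alpha\Delta_{g(t),m}\psi$, and identify $w=(\Delta_{g(t),m}\psi)\circ\psi^{-1}$ with $\Delta_{\hat g(t),m}\,id$ using $(\psi^{-1})^\ast g=\hat g$, after which (\ref{w_in_local_coordinates}) is read off from (\ref{map_laplacian_in_coordinates}). The only difference is cosmetic: where the paper cites Remark 2.46 of \cite{CLN06} for the identity $\Delta_{g(t),m}\psi=(\Delta_{\hat g(t),m}\,id)\circ\psi$, you verify it directly (and correctly) by the adapted-chart computation with $\mathcal{C}_1=\mathcal{C}\circ\psi$.
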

\begin{proof}
With a slight abuse of notation, we first define $w$ to be the tangent vector field $w := \alpha ~ \psi_t \circ \psi^{-1}$ on $\M$. 
We then find that $\psi(t)$ and $\hat{x}(t)$ solve the following system of partial differential equations 
\begin{align*}
	& \hat{x}_t = v \circ \hat{x} - \tfrac{1}{\alpha} \nabla \hat{x}(w), 
	\\
	& w = \alpha ~ \psi_t \circ \psi^{-1}
	\\
	& \psi_t = \tfrac{1}{\alpha} \Delta_{g(t), m} \psi \quad 
\end{align*}
in $\M \times (0,T)$. 
The differential equation for the embedding $\hat{x}$ depends on the vector field $w$.
We will show now that we can eliminate the harmonic map heat flow in the above system of equations.
The reason is that the vector field $w$ can be computed from the reparametrized embedding $\hat{x}(t)$
by using formula (\ref{w_in_local_coordinates}).
This follows from Remark 2.46 in \cite{CLN06}, which states that
$$
	\Delta_{g(t), m} \psi = (\Delta_{(\psi(t)^{-1})^\ast g(t), m} id) \circ \psi,
$$
and from the fact that the pull-back metric $(\psi(t)^{-1})^\ast g(t)$ is equal to the induced
metric $\hat{g}(t)$, which can be seen as follows 
$$
	(\psi(t)^{-1})^\ast g(t) = (\psi(t)^{-1})^\ast x(t)^\ast \mathfrak{e}
	= ( x(t) \circ \psi(t)^{-1})^\ast \mathfrak{e} = \hat{x}(t)^\ast \mathfrak{e} = \hat{g}(t).
$$
This implies that
$$
	w = \alpha \; \psi_t \circ \psi^{-1} = ( \Delta_{g(t),m} \psi ) \circ \psi^{-1}
	= \Delta_{\hat{g}(t),m} id.
$$
From the definition of the map Laplacian in (\ref{map_laplacian_in_coordinates}),
we obtain that
$$
 (\mathcal{C} \circ (\Delta_{\hat{g}(t),m} id) \circ \mathcal{C}^{-1})^k
  = \hat{g}^{ij} \left( - \Gamma(\hat{g})^k_{ij} + \Gamma(m)^k_{ij} \right),
$$
which then gives (\ref{w_in_local_coordinates}). 
\end{proof}
Under sufficient smoothness conditions, the evolution equation for $\hat{x}$ also holds -- in the trace sense -- on the boundary of $\M$.
The above result is rather astonishing, since it shows that the evolution equation for the reparametrized embedding $\hat{x}(t)$
does not depend on the solution of the harmonic map heat flow $\psi(t)$. This is an important fact with respect to the computational costs
of our approach, because it means that it will not be necessary to compute the solution $\psi(t)$ to the harmonic map heat flow.
In the above proposition, we have not made use of the boundary conditions $(BC)$, which we will do now.
\begin{lem}
Suppose the harmonic map heat flow $\psi(t)$ satisfies the boundary condition $\psi(\partial \M, t) \subset \partial \M$.
Then the vector field $w$ on $\partial \M$, defined in Proposition \ref{Prop_reparametrization}, is tangential to $\partial \M$ and
$\nabla \hat{x}(w)$ is tangential to the boundary of $\Gamma(t)$.
Hence, the reparametrization by $\psi(t)$ only induces tangential motions on the boundary of $\Gamma(t)$.
\end{lem}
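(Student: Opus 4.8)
The plan is to work from the defining relation $w = \alpha\,\psi_t\circ\psi^{-1}$ together with the two boundary conditions of $(BC)$. First I would note that the condition $\psi(\partial\M,t)\subset\partial\M$ means that $\psi(t)$ restricts to a map $\partial\M\to\partial\M$ for every $t$; differentiating this in $t$ at a point of $\partial\M$ shows that $\psi_t$ is tangential to $\partial\M$ there, and hence so is $w=\alpha\,\psi_t\circ\psi^{-1}$. More carefully: pick $p\in\partial\M$ and write $q(t):=\psi(t)^{-1}(p)\in\partial\M$; then $t\mapsto\psi(t)(q(t))\equiv p$ is constant, and also $t\mapsto\psi(t)(q_0)$ for fixed $q_0\in\partial\M$ is a curve in $\partial\M$, so $\psi_t(q_0)=\partial_t\psi(t)(q_0)$ is tangent to $\partial\M$ at $\psi(t)(q_0)\in\partial\M$. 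Evaluating at $q_0=q(t)$ gives that $w(p)$ lies in $T_p(\partial\M)$. This is the first assertion.

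For the second assertion, I would use that $\hat x(t)=x(t)\circ\psi(t)^{-1}$ maps $\partial\M$ onto $\partial\Gamma(t)=x(\partial\M,t)$ (this was part of the standing assumptions on $x$, since $\psi(t)^{-1}$ preserves $\partial\M$). Therefore the differential $\nabla\hat x(t)$ at a boundary point maps $T_p(\partial\M)$ into $T_{\hat x(p)}(\partial\Gamma(t))$. Combining this with the first assertion — $w(p)\in T_p(\partial\M)$ — gives immediately that $\nabla\hat x(w)$ is tangential to $\partial\Gamma(t)$ at every boundary point. Finally, from the reparametrized evolution equation $\hat x_t = v\circ\hat x - \tfrac1\alpha\nabla\hat x(w)$ of Proposition \ref{Prop_reparametrization}, the difference between the reparametrized velocity $\hat v\circ\hat x=\hat x_t$ and the original velocity $v\circ\hat x$ on $\partial\Gamma(t)$ equals $-\tfrac1\alpha\nabla\hat x(w)$, which we have just shown is tangential to $\partial\Gamma(t)$; hence the reparametrization alters the boundary velocity only in directions tangent to $\partial\Gamma(t)$, proving the last sentence.

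I expect the only genuinely delicate point to be justifying that $w$ and $\hat x_t$ make sense on $\partial\M$ in the first place and that the evolution equation and the chain-rule identities extend to the boundary in a trace sense; the remark immediately preceding the lemma already flags this (``under sufficient smoothness conditions, the evolution equation for $\hat x$ also holds -- in the trace sense -- on the boundary of $\M$''), so I would simply invoke the assumed smoothness of the family $\psi(t)$ of diffeomorphisms up to the boundary. The Neumann-type condition $\nabla_{\mu(t)}\psi\perp_m\partial\M$ is not actually needed for this lemma — it governs the \emph{normal} behaviour of $\psi$, whereas here we only need the constraint $\psi(\partial\M,t)\subset\partial\M$ — so I would remark that the tangentiality of $w$ on $\partial\M$ uses only the third line of $(BC)$. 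Everything else is a direct consequence of differentiating the boundary-preservation identity and of the fact that an embedding carrying $\partial\M$ to $\partial\Gamma(t)$ carries tangent vectors of $\partial\M$ to tangent vectors of $\partial\Gamma(t)$; there are no serious estimates or calculations to grind through.
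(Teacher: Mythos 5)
Your proposal is correct and follows exactly the route of the paper's own (one-line) proof: the paper also deduces the tangentiality of $w$ from $w=\alpha\,\psi_t\circ\psi^{-1}$ together with the boundary-preservation condition, and the tangentiality of $\nabla\hat{x}(w)$ from $\hat{x}(\partial\M,t)\subset\partial\Gamma(t)$. You have simply written out the differentiation of the boundary-preservation identity and the push-forward of boundary tangent vectors that the paper leaves implicit; your observation that only the third line of $(BC)$ is used is also consistent with the paper's hypotheses for this lemma.
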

\begin{proof}
The statement easily follows from $w = \alpha \; \psi_t \circ \psi^{-1}$, see in the proof of Proposition \ref{Prop_reparametrization},
and from $\hat{x}(\partial \M, t) \subset \partial \Gamma(t)$.
\end{proof}
\begin{lem}
\label{Lemma_bc_normal_derivative}
Suppose the harmonic map heat flow $\psi(t)$ satisfies the boundary condition $\nabla_{\mu(t)} \psi \perp_m \partial \M$ on $\partial \M \times (0,T)$.
Furthermore, let $\hat{y}: \Omega \rightarrow \M$ be the map defined by $\hat{y}(t) := \hat{x}(t)^{-1} = \psi(t) \circ x(t)^{-1}$ for all $t \in [0,T)$,
where $\hat{x}(t)$ is the reparametrized embedding from Proposition \ref{Prop_reparametrization}. 
Then $\hat{y}(t)$ satisfies the condition
$$
\nabla_{\nu(t)} \hat{y} \perp_{m} \partial \M  \; \textnormal{on $\partial \Gamma(t) \times (0,T)$.}
$$
\end{lem}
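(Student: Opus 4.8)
The plan is to transfer the boundary condition $\nabla_{\mu(t)}\psi\perp_m\partial\M$ from the reference manifold $\M$ to the submanifold $\Gamma(t)$ via the relation $\hat y(t)=\psi(t)\circ x(t)^{-1}$, using the chain rule for differentials. The key observation is that the co-normal vector fields are related in exactly the way needed: $\nu(t)$ is the unit co-normal to $\partial\Gamma(t)$ with respect to $e(t)=x(t)_*$-nothing, i.e.\ the metric induced by the Euclidean metric on $\Gamma(t)$, while $\mu(t)$ is the unit co-normal to $\partial\M$ with respect to the pull-back metric $g(t)=x(t)^\ast\mathfrak e$. Since $x(t):(\M,g(t))\to(\Gamma(t),e(t))$ is by construction an isometry, its differential $\nabla x(t)$ maps the $g(t)$-orthonormal frame adapted to $\partial\M$ to the $e(t)$-orthonormal frame adapted to $\partial\Gamma(t)$; in particular it carries the inward (or outward) $g(t)$-unit co-normal $\mu(t)$ of $\partial\M$ to the $e(t)$-unit co-normal $\nu(t)$ of $\partial\Gamma(t)$, up to sign. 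Equivalently, $\nabla(x(t)^{-1})$ applied to $\nu(t)$ equals $\pm\mu(t)$.

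First I would write, for a point $p\in\partial\Gamma(t)$ and its preimage $q=x(t)^{-1}(p)=\hat y(t)(p)\in\partial\M$,
\[
\nabla_{\nu(t)}\hat y \;=\; \nabla\hat y(t)\bigl(\nu(t)\bigr)
\;=\; \nabla\bigl(\psi(t)\circ x(t)^{-1}\bigr)\bigl(\nu(t)\bigr)
\;=\; \nabla\psi(t)\Bigl(\nabla\bigl(x(t)^{-1}\bigr)\bigl(\nu(t)\bigr)\Bigr).
\]
By the isometry remark above, $\nabla(x(t)^{-1})(\nu(t))=\pm\mu(t)$, so the right-hand side equals $\pm\nabla\psi(t)(\mu(t))=\pm\nabla_{\mu(t)}\psi$. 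The hypothesis $\nabla_{\mu(t)}\psi\perp_m\partial\M$ then gives $\nabla_{\nu(t)}\hat y\perp_m\partial\M$ immediately, since the sign is irrelevant to perpendicularity. I would also note, as in the previous lemma, that $\hat y(t)(\partial\Gamma(t))\subset\partial\M$ guarantees that $\nabla\psi(t)(\mu(t))$ is being evaluated at a boundary point of $\M$, so the statement ``$\perp_m\partial\M$'' is meaningful.

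The only genuine point to verify carefully — the main (minor) obstacle — is the claim that $\nabla(x(t)^{-1})$ sends $\nu(t)$ to a $g(t)$-unit co-normal of $\partial\M$, i.e.\ that the notion of ``co-normal to the boundary'' is preserved under the isometry $x(t)$. This is where one must use that $x(t)$ is an isometry from $(\M,g(t))$ onto $(\Gamma(t),e(t))$ \emph{as manifolds with boundary}: isometries map boundary to boundary and preserve orthogonal complements of the boundary tangent space, hence map co-normals to co-normals. Once this is granted, the computation is a one-line application of the chain rule for $\nabla$, and the trace-sense remark following Proposition \ref{Prop_reparametrization} covers the regularity needed to evaluate all differentials on $\partial\Gamma(t)$.
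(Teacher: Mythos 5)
Your proposal is correct and takes essentially the same approach as the paper: the paper likewise identifies $\nu(t) = (\nabla_{\mu(t)}x)\circ x^{-1}$ as a unit co-normal on $\partial\Gamma(t)$ by using that $x(t)$ is an isometry from $(\M,g(t))$ onto $(\Gamma(t),e(t))$ (checking $\mathfrak{e}(\nabla_{\mu(t)}x,\nabla_{\mu(t)}x)=g(t)(\mu,\mu)=1$ and the orthogonality to boundary-tangential directions), and then applies the chain rule to $\hat{y}=\psi\circ x^{-1}$ to get $\nabla_{\nu(t)}\hat{y}=(\nabla_{\mu(t)}\psi)\circ\psi^{-1}\circ\hat{y}$, from which the conclusion follows exactly as you argue.
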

\begin{proof}
Since $\mathfrak{e}(\nabla_{\mu(t)} x, \nabla_{\mu(t)} x) = g(t)(\mu, \mu) = 1$
and $\mathfrak{e}(\nabla_{\mu(t)} x, \nabla_{\xi} x) = g(t)(\mu(t), \xi) = 0$
for all vector fields $\xi$ that are tangential to $\partial \M$,
it follows that $\nu(t) := (\nabla_{\mu(t)} x) \circ x^{-1}$ is a unit co-normal vector field
on $\Gamma(t)$ with respect to the metric $e(t)$.
Using the fact that $\hat{y}(t) = \psi(t) \circ x^{-1}(t)$, we find the 
additional boundary condition
$$
\nabla_{\nu(t)} \hat{y} = (\nabla \psi) \circ x^{-1} (\nabla_{\nu(t)} x^{-1})
= (\nabla \psi)(\mu(t))\circ x^{-1} = (\nabla_{\mu(t)} \psi) \circ \psi^{-1} \circ \hat{y} 
\quad \textnormal{on $\partial \Gamma(t)$}.
$$
Since $(\nabla_{\mu(t)} \psi) \circ \psi^{-1} \perp_m \partial \M$,
this means that $m(\hat{y}(p,t), t)(\nabla_{\nu(t)} \hat{y}(p,t), \xi) =0$
for all tangent vectors $\xi$ of $\partial \M$ at the point $\hat{y}(p,t)$.
Hence, we have $\nabla_{\nu(t)} \hat{y} \perp_m \partial \M$
on $\partial \Gamma(t) \times (0,T)$.
\end{proof}

\begin{remark}
Due to the initial condition $\psi(\cdot,0) = id(\cdot)$ in $(BC)$, we have $\hat{x}(0) = x(0) = x_0$. 
This result is also true for arbitrary $\psi(\cdot,0) = \psi_0$, if we replace the definition of $\hat{x}(t)$
by $\hat{x}(t) := x(t) \circ \psi_0 \circ \psi(t)^{-1}$. In this case, Proposition \ref{Prop_reparametrization} still holds.  
\end{remark}

\subsubsection*{Uniqueness}
Suppose that the embeddings $\hat{x}_1(t)$ and $\hat{x}_2(t)$ for the submanifold $\Gamma(t)$ (i.e. $\Gamma(t) = \hat{x}_1(\M,t) = \hat{x}_2(\M,t)$)
are solutions to (\ref{equation_for_reparametrized_motion}), that is
\begin{equation*}
	(\hat{x}_r)_t = v \circ \hat{x}_r - \tfrac{1}{\alpha} \nabla \hat{x}_r(w_r),
\end{equation*}
with $\hat{x}_r(\cdot, 0) = x_0(\cdot)$ for $r=1,2$.
Here, $w_{r} \circ \mathcal{C}^{-1} = W^k_{r} \frac{\partial \mathcal{C}^{-1}}{\partial \theta^k}$ is given by
$$
	W^k_{r} = \hat{g}^{ij}_{r} \left( 
		\Gamma(m)^k_{ij} - \Gamma(\hat{g}_{r})^k_{ij}	
	\right),
$$
where $\hat{g}_r(t) := \hat{x}_r(t)^\ast \mathfrak{e}$ for $r=1,2$. Furthermore, assume
that the vector fields $w_r$ are tangential to the boundary of $\M$ and that the inverse maps 
$\hat{y}_r(t) := \hat{x}_r(t)^{-1}$ satisfy the boundary condition from Lemma \ref{Lemma_bc_normal_derivative}, that is
$
\nabla_{\nu(t)} \hat{y}_r \perp_{m} \partial \M.
$ 
We will now show that $\hat{x}_1(t) = \hat{x}_2(t)$, provided that $w_r$ is regular enough to ensure that the solutions 
$\psi_r: \M \times[0,T) \rightarrow \M$ for $r= 1,2$ to the ODEs 
$$
	(\psi_r)_t = \tfrac{1}{\alpha} w_r \circ \psi_r
$$
with $\psi_r(\cdot, 0) = id(\cdot)$ on $\M$, remain diffeomorphisms for all times $t \in [0,T)$.
A short calculation shows that the maps $x_r(t) := \hat{x}_r(t) \circ \psi_r(t)$, for $r=1,2$, then satisfy
$$
	(x_r)_t = v \circ x_r
$$
with $x_r(\cdot,0) = x_0(\cdot)$. Since the solution to this ODE is unique, we indeed have $x_1(t) = x_2(t)$. 
It therefore remains to show that $\psi_1(t) = \psi_2(t)$.
We observe that
\begin{align*}
	(\psi_r)_t &= \tfrac{1}{\alpha} \big( \Delta_{\hat{g}_r(t),m} id \big) \circ \psi_r
	\\
	&= \tfrac{1}{\alpha} \Delta_{\psi_r(t)^\ast \hat{g}_r(t),m} \psi_r,
\end{align*}
where we have made use of Remark 2.46 in \cite{CLN06} again.
Furthermore, we have
$$
	\psi_r(t)^\ast \hat{g}_r(t) = \psi_r(t)^\ast (\hat{x}_r(t)^\ast \mathfrak{e})
	= (\hat{x}_r(t) \circ \psi_r(t))^\ast \mathfrak{e}
	= x_r(t)^\ast \mathfrak{e}.
$$
Since $x_1(t) = x_2(t)$, this shows that $\psi_1(t)^\ast \hat{g}_1(t) = \psi_2(t)^\ast \hat{g}_2(t)$.
Hence,
$$
	(\psi_r)_t = \tfrac{1}{\alpha} \Delta_{g(t),m} \psi_r,
$$
with $g(t):= \psi_r(t)^\ast \hat{g}_r(t)$ and $\psi_r(\cdot, 0) = id(\cdot)$ on $\M$.
Since $w_r$ is tangential on the boundary of $\M$,  it also follows that $\psi_r(\partial \M, t) \subset \partial \M$.
Furthermore, we conclude that
$$
	\nabla_{\nu(t)} \hat{y}_r = \nabla_{\nu(t)} \hat{x}_r^{-1} = \nabla \psi_r (\nabla_{\nu(t)} x_r^{-1}),
$$
and thus $\nabla \psi_r (\nabla_{\nu(t)} x_r^{-1}) \perp_m \partial \M$. Like in the proof of Lemma \ref{Lemma_bc_normal_derivative},
we can choose the co-normal $\nu(t) = (\nabla_{\mu(t)} x_r) \circ x_r^{-1}$, where $\mu(t)$ is a unit co-normal field on $\partial \M$
with respect to the metric $g(t)$. 
This implies that $\nabla_{\mu(t)} \psi_r \perp_m \partial \M$. 
From the uniqueness of the harmonic map heat flow, we finally obtain that $\psi_1(t) = \psi_2(t)$ and therefore 
$\hat{x}_1(t) = x_1(t) \circ \psi_1(t)^{-1} = x_2(t) \circ \psi_2(t)^{-1} = \hat{x}_2(t)$.

\subsubsection*{Existence}
Existence of solutions to equation (\ref{equation_for_reparametrized_motion}) directly follows from the proof of Proposition \ref{Prop_reparametrization}
and the existence of solutions to the harmonic map heat flow $(HMF)$ with mixed boundary conditions $(BC)$.
In \cite{Ham75}, uniqueness and existence of solutions to this flow was proved under certain assumptions.
For long-time existence, sufficient conditions are that the Riemannian curvature of $\M$
with respect to the metric $m$ is non-positive and that the boundary $\partial \M$ is totally geodesic
with respect to the metric $m$.

\subsubsection*{Assumptions on the reference manifold $\M$}
Henceforward, we will drop the condition that $\M$ has Riemannian curvature $\leq 0$ 
with respect to the metric $m$ for the following reasons. 
First, short-time existence to $(HMF)$ does not depend on the curvature of $\M$.
This means that the following statements are valid as long as the harmonic map heat flow exists.
Second, it is known that for harmonic map heat flows with Dirichlet boundary conditions, 
the curvature condition can be replaced by a small range condition, see \cite{Jo81}.
We therefore think that the negation of the curvature condition will not affect the performance of our numerical method
in applications. 

In contrast, we will keep the condition that $\M$ has totally geodesic boundary with respect to $\M$.
The reason is that such reference manifolds can be found or constructed very easily (see the remark below).
Furthermore, it will turn out in Section \ref{Section_implementation} that for typical examples
of such reference manifolds such as the half-sphere and the cylinder (\ref{definition_cylinder}), 
the implementation of the boundary condition (\ref{boundary_condition_lambda_zeta})
becomes straightforward, since then the co-normal with respect to $m$ is a constant vector field.

The condition that $\M$ is supposed to have totally geodesic boundary, however, implies that the reference manifold
must be curved even if the moving submanifold $\Gamma(t)$ is flat. This can be seen from the following argument.
Since geodesics in an Euclidean space are straight lines, there is no bounded domain
in $\mathbb{R}^2$ that has a smooth totally geodesic boundary and that can therefore be used as reference manifold.
\begin{remark}
The half-spheres $\mathbb{H}^{n-1}$ defined in (\ref{half_sphere_definition}) provide a reference manifold $\M$ for a wide range of applications,
that is for all evolving submanifolds $\Gamma(t)$ 
that are given as a time-dependent embedding of $\mathbb{H}^{n-1}$.
For example, for $\Gamma(t)$ being the closure of a moving, simply-connected domain 
$U(t) \subset \mathbb{R}^2$, the reference manifold $\M$ can be chosen to be the 
two-dimensional half-sphere $\mathbb{H}^2 \subset \mathbb{R}^3$. 
\end{remark}

\subsubsection{The identity map $u$}
Since we are interested in the motion of $\Gamma(t)$ and not in the embedding $\hat{x}(t)$,
we aim to reformulate 
\begin{equation*}
	\hat{x}_t = v \circ \hat{x} - \tfrac{1}{\alpha} \nabla \hat{x}(w),
\end{equation*}
with $w$ given by (\ref{w_in_local_coordinates})
on the evolving submanifold $\Gamma(t)$. We therefore introduce the map $u: \Omega \rightarrow \Omega$ with
$u(p,t) = p$ for all $p \in \Gamma(t)$ and $t \in [0,T)$.
\begin{defn}
The material derivative $\partial^\bullet f$ of a differentiable function $f$ on $\Gamma(t)$ with respect to the embedding $\hat{x}$
is defined by
\begin{equation}
	(\partial^\bullet f) \circ \hat{x} := \frac{d}{dt} (f \circ \hat{x}).
	\label{material_derivative_definition}
\end{equation}
\end{defn}
The material derivative $\partial^\bullet u$ of $u$ is obviously given by
$$
	\partial^\bullet u = \hat{x}_t \circ \hat{x}^{-1}.
$$
This directly leads to the following result.
\begin{cor}
The identity map satisfies the equation
$$
	\partial^\bullet u = v - \tfrac{1}{\alpha} \big( \nabla \hat{x}(w) \big) \circ \hat{y}.
$$
\end{cor}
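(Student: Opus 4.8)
The plan is to combine the definition of the material derivative of $u$ with the reparametrized evolution equation from Proposition \ref{Prop_reparametrization}. First I would observe that by the definition (\ref{material_derivative_definition}) applied to $f = u$, and since $u(p,t)=p$ means $u \circ \hat{x} = \hat{x}$, we get $(\partial^\bullet u) \circ \hat{x} = \frac{d}{dt}(u \circ \hat{x}) = \frac{d}{dt}\hat{x} = \hat{x}_t$, which is precisely the stated identity $\partial^\bullet u = \hat{x}_t \circ \hat{x}^{-1}$ after composing with $\hat{x}^{-1} = \hat{y}$.

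Next I would substitute the right-hand side of equation (\ref{equation_for_reparametrized_motion}), namely $\hat{x}_t = v \circ \hat{x} - \tfrac{1}{\alpha}\nabla\hat{x}(w)$, into this expression. Composing with $\hat{y} = \hat{x}^{-1}$ term by term gives $\partial^\bullet u = (v \circ \hat{x})\circ\hat{y} - \tfrac{1}{\alpha}\big(\nabla\hat{x}(w)\big)\circ\hat{y}$. Since $\hat{x}\circ\hat{y} = \mathrm{id}$ on $\Gamma(t)$, the first term simplifies to just $v$, and I obtain exactly the claimed formula
\[
\partial^\bullet u = v - \tfrac{1}{\alpha}\big(\nabla\hat{x}(w)\big)\circ\hat{y}.
\]

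There is essentially no obstacle here: the corollary is a direct substitution, and the only thing to be slightly careful about is the bookkeeping of compositions — which maps are evaluated on $\M$ versus on $\Gamma(t)$, and the fact that $\hat{y}(t)$ is by definition the inverse of the (assumed) diffeomorphism $\hat{x}(t)$, so that $v \circ \hat{x} \circ \hat{y} = v$ makes sense pointwise on $\Gamma(t)$. One might also remark that $w$ here is the tangent vector field on $\M$ with components (\ref{w_in_local_coordinates}), so $\big(\nabla\hat{x}(w)\big)\circ\hat{y}$ is a genuine vector field along $\Gamma(t)$, tangential to $\Gamma(t)$; by the earlier lemma it is tangential to $\partial\Gamma(t)$ on the boundary. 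The entire argument is the one-line computation $\partial^\bullet u = \hat{x}_t \circ \hat{x}^{-1} = \big(v\circ\hat{x} - \tfrac{1}{\alpha}\nabla\hat{x}(w)\big)\circ\hat{y}$, and I would present it as such.
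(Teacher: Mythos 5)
Your argument is correct and is exactly the one the paper intends: the paper notes $\partial^\bullet u = \hat{x}_t \circ \hat{x}^{-1}$ immediately before the corollary and says it "directly leads" to the result, i.e.\ substitute (\ref{equation_for_reparametrized_motion}) and compose with $\hat{y}=\hat{x}^{-1}$, which is precisely your one-line computation. No gaps; your extra remarks on the bookkeeping of compositions and the tangentiality of $\big(\nabla\hat{x}(w)\big)\circ\hat{y}$ are consistent with the surrounding lemmas.
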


Due to the Nash embedding theorem, we can w.l.o.g. assume that the Riemannian manifold $(\M,m)$
is isometrically embedded into a $k$-dimensional Euclidean space $(\mathbb{R}^k, \mathfrak{e})$
for $k$ sufficiently large. The half-spheres $\mathbb{H}^{n-1}$, which we will use as
reference manifolds in Section \ref{section_numerical_results},
are embedded into $\mathbb{R}^{n}$ by definition.
Under this assumption, the metric $m$ is induced by the Euclidean metric
of the ambient space.
We are now going to prove the following result.
\begin{prop}
\label{Proposition_for_identity_map}
Suppose that the reference manifold $(\M,m)$ is isometrically embedded into an Euclidean space $(\mathbb{R}^k, \mathfrak{e})$.
The identity map $u$ then satisfies the equation
\begin{align}
	& \partial^\bullet u = v 
	- \tfrac{1}{\alpha} \nabla u \big( (grad_{\hat{h}(t)} \hat{y})^T \zeta \big),
	\label{equation_for_u}
\end{align}	
where $\hat{h}(t) := \hat{y}(t)^\ast m$ is the pull-back metric of $m$ onto $\Gamma(t)$ and the vector field $\zeta: \Omega \rightarrow \mathbb{R}^k$ is given by
\begin{align}
	&\zeta - \Delta_{e(t)} \hat{y} = 0.
	\label{equation_for_zeta}
\end{align}
Furthermore, on the boundary we find the following equations for $\zeta(t)$
and $\hat{y}(t)$,
\begin{align}	
	& (\lambda \circ \hat{y}) \cdot \zeta = 0
	& \textnormal{on $\partial \Gamma(t)$}, 
	\label{boundary_condition_lambda_zeta}
	\\
	& \nabla_{\nu(t)} \hat{y} \perp_{m} \partial \M  
	& \textnormal{on $\partial \Gamma(t) \times (0,T)$}, 
	\label{boundary_condition_nabla_y}	
	\\
	& \hat{y}(\partial \Gamma(t), t) \subset \partial \M 
	& \textnormal{for all $t \in [0,T)$}. \nonumber
\end{align}
Here, $\lambda$ is a unit co-normal vector field on $\partial \M$ with respect to the metric $m$
and  $\nu(t)$ is a unit co-normal field on $\partial \Gamma(t)$ with respect to the metric $e(t)$.
\end{prop}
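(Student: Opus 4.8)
The plan is to start from the Corollary stated immediately above, which already gives $\partial^\bullet u = v - \tfrac{1}{\alpha}\big(\nabla\hat{x}(w)\big)\circ\hat{y}$, and to rewrite the term $\big(\nabla\hat{x}(w)\big)\circ\hat{y}$ entirely in terms of $\hat{y}$, the induced metric $e(t)$ and the pull-back metric $\hat{h}(t)=\hat{y}(t)^\ast m$. The first step is to identify $w$, transported to $\Gamma(t)$, as a map Laplacian. Recall from the proof of Proposition \ref{Prop_reparametrization} that $w=\Delta_{\hat{g}(t),m}\,id$ with $\hat{g}(t)=(\psi(t)^{-1})^\ast g(t)=\hat{x}(t)^\ast\mathfrak{e}=\hat{x}(t)^\ast e(t)$; hence $\hat{x}(t)\colon(\M,\hat{g}(t))\to(\Gamma(t),e(t))$ is an isometry, and since $\hat{y}(t)\circ\hat{x}(t)=id_\M$, naturality of the map Laplacian under this isometry (equivalently, Remark 2.46 in \cite{CLN06} applied to $\hat{y}(t)$) yields $w\circ\hat{y}(t)=\Delta_{e(t),m}\hat{y}(t)$, the map Laplacian of $\hat{y}(t)\colon(\Gamma(t),e(t))\to(\M,m)$. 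Moreover, because $\hat{x}(t)=\hat{y}(t)^{-1}$, we have $\nabla\hat{x}(t)|_{\hat{y}(p)}=(\nabla\hat{y}(t)|_{p})^{-1}$, so that $\big(\nabla\hat{x}(w)\big)\circ\hat{y}=(\nabla\hat{y})^{-1}\big(\Delta_{e(t),m}\hat{y}\big)$.

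Next I would pass from the intrinsic map Laplacian $\Delta_{e(t),m}\hat{y}$ to the componentwise Laplacian $\zeta:=\Delta_{e(t)}\hat{y}$ of $\hat{y}$ regarded as an $\mathbb{R}^k$-valued map (recall that, by the Nash embedding theorem, $(\M,m)$ is isometrically embedded into $(\mathbb{R}^k,\mathfrak{e})$). The standard composition formula for tension fields gives $\zeta=\Delta_{e(t)}\hat{y}=\Delta_{e(t),m}\hat{y}+\tr_{e(t)}\mathrm{II}_{\M}(\nabla\hat{y},\nabla\hat{y})$, where $\mathrm{II}_{\M}$ is the second fundamental form of $\M$ in $\mathbb{R}^k$ and the correction term is normal to $\M$; thus $\Delta_{e(t),m}\hat{y}$ is exactly the component of $\zeta$ tangential to $\M$. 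Since $\nabla\hat{y}$ takes values in $T\M$, one then checks that $(\nabla\hat{y})^{-1}$ applied to this tangential component is the unique tangent vector $Z$ on $\Gamma(t)$ determined by $\hat{h}(t)(Z,X)=\mathfrak{e}\big(\nabla\hat{y}(X),\zeta\big)$ for all tangent vectors $X$ to $\Gamma(t)$ — here one uses $\hat{h}(t)=\hat{y}(t)^\ast m$ together with $m=\mathfrak{e}|_{T\M}$ — and this $Z$ is precisely $(grad_{\hat{h}(t)}\hat{y})^T\zeta$, which in a local chart reads $\hat{h}^{ij}\tfrac{\partial\hat{y}^a}{\partial\theta^j}\zeta^a$. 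Applying $\nabla u$, which is simply the inclusion $T\Gamma(t)\hookrightarrow\mathbb{R}^n$, then turns the Corollary into (\ref{equation_for_u}), while (\ref{equation_for_zeta}) is nothing but the definition of $\zeta$.

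For the boundary relations, (\ref{boundary_condition_nabla_y}) is exactly Lemma \ref{Lemma_bc_normal_derivative}, and $\hat{y}(\partial\Gamma(t),t)\subset\partial\M$ follows from $\hat{y}(t)=\psi(t)\circ x(t)^{-1}$, $x(t)(\partial\M)=\partial\Gamma(t)$ and the boundary condition $\psi(\partial\M,t)\subset\partial\M$ in $(BC)$. For (\ref{boundary_condition_lambda_zeta}), note that at $p\in\partial\Gamma(t)$ the co-normal $\lambda$ lies in $T_{\hat{y}(p)}\M$, so the normal-to-$\M$ part of $\zeta$ does not contribute and $(\lambda\circ\hat{y})\cdot\zeta=(\lambda\circ\hat{y})\cdot\Delta_{e(t),m}\hat{y}=(\lambda\circ\hat{y})\cdot(w\circ\hat{y})$; but $w=\alpha\,\psi_t\circ\psi^{-1}$ is tangential to $\partial\M$ on $\partial\M$ because $\psi(\partial\M,t)\subset\partial\M$, hence $m$-orthogonal — equivalently $\mathfrak{e}$-orthogonal, both vectors lying in $T\M$ — to $\lambda$, which gives $(\lambda\circ\hat{y})\cdot\zeta=0$ on $\partial\Gamma(t)$.

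I expect the only genuine work to lie in the middle paragraph: keeping the four metrics $m,\mathfrak{e},e(t),\hat{h}(t)$ and the two orthogonal projections (onto $T\Gamma(t)\subset\mathbb{R}^n$ and onto $T\M\subset\mathbb{R}^k$) consistent, and verifying that $(\nabla\hat{y})^{-1}$ composed with the tangential-to-$\M$ projection is literally the operator $(grad_{\hat{h}(t)}\hat{y})^T$ appearing in the statement. The remaining ingredients are either direct citations — Remark 2.46 in \cite{CLN06} and the tension-field composition formula — or immediate consequences of Lemma \ref{Lemma_bc_normal_derivative} and the earlier lemma on the tangentiality of $w$.
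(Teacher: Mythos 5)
Your argument is correct and arrives at the same key identity as the paper, namely that the correction term $\big(\nabla\hat{x}(w)\big)\circ\hat{y}$ equals $(grad_{\hat{h}(t)}\hat{y})^T\Delta_{e(t)}\hat{y}$, but it gets there by a genuinely different route in the middle step. The paper works entirely in the adapted coordinates with $\hat{Y}^i(\theta)=\theta^i$, converts $W^k=\hat{g}^{ij}\big(\Gamma(m)^k_{ij}-\Gamma(\hat{g})^k_{ij}\big)$ into $Z^\kappa=e^{\iota\eta}\big(\Gamma(\hat{h})^\kappa_{\iota\eta}-\Gamma(e)^\kappa_{\iota\eta}\big)$, and then checks by a direct Christoffel-symbol computation that
$e^{\iota\eta}\Gamma(\hat{h})^\rho_{\iota\eta}\tfrac{\partial \hat{Y}}{\partial\theta^\rho}\cdot\tfrac{\partial \hat{Y}}{\partial\theta^\sigma}=e^{\iota\eta}\tfrac{\partial^2 \hat{Y}}{\partial\theta^\iota\partial\theta^\eta}\cdot\tfrac{\partial \hat{Y}}{\partial\theta^\sigma}$,
which is precisely the coordinate incarnation of your observation that the part of $\zeta=\Delta_{e(t)}\hat{y}$ normal to $\M$ drops out when paired with $\nabla\hat{y}$. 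You phrase this invariantly instead: $w\circ\hat{y}$ is the tension field $\Delta_{e(t),m}\hat{y}$ by naturality of the map Laplacian under the isometry $\hat{x}(t):(\M,\hat{g}(t))\to(\Gamma(t),e(t))$; the componentwise Laplacian $\zeta$ differs from it by a term normal to $\M$ via the composition formula with the second fundamental form of $\M\subset\mathbb{R}^k$; and $(\nabla\hat{y})^{-1}$ restricted to $T\M$ coincides with $(grad_{\hat{h}(t)}\hat{y})^T$ because $\hat{h}(Z,X)=\mathfrak{e}(\nabla\hat{y}(Z),\nabla\hat{y}(X))$ -- a verification that does go through exactly as you anticipate. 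Your route is cleaner and makes the geometric role of $\hat{h}$ transparent, at the cost of importing the tension-field composition formula as an external ingredient, whereas the paper's version is self-contained coordinate algebra. Your derivation of (\ref{boundary_condition_lambda_zeta}) also differs slightly: you compute $(\lambda\circ\hat{y})\cdot\zeta=(\lambda\circ\hat{y})\cdot(w\circ\hat{y})$ and use tangentiality of $w$ to $\partial\M$, while the paper argues via the equivalence with tangentiality of $z$ to $\partial\Gamma(t)$ using the $\hat{h}$-co-normal; both are sound. The remaining ingredients (the Corollary as starting point, Lemma \ref{Lemma_bc_normal_derivative} for (\ref{boundary_condition_nabla_y}), and the inclusion $\hat{y}(\partial\Gamma(t),t)\subset\partial\M$) coincide with the paper's.
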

\begin{proof}
Obviously, $\hat{y}$ satisfies the boundary condition 
$\hat{y}(\partial \Gamma(t), t) \subset \partial \M$ for all $t \in [0,T)$.
The boundary condition (\ref{boundary_condition_nabla_y}) has already been proved in Lemma \ref{Lemma_bc_normal_derivative}.
In the following, we use the notation 
$\hat{X}(t) := \hat{x}(t) \circ \mathcal{C}^{-1}$ for the map $\hat{x}$ on $\M$,
where $\mathcal{C}$ is a local coordinate chart of $\M$,
as well as $U(t) := u(t) \circ \hat{X} = \hat{X}$ and 
$\hat{Y}(t) := \hat{y}(t) \circ \hat{X} = \mathcal{C}^{-1}$ for the maps
$u(t)$ and $\hat{y}(t)$ on $\Gamma(t)$.
The bull-back metric $\hat{h}(t) = \hat{y}(t)^\ast m$ on $\Gamma(t)$ is then locally given by
$$
	\hat{h}_{\kappa \eta} = (m_{ij} \circ \mathcal{C} \circ \hat{Y}) 
	\frac{\partial \hat{Y}^i}{\partial \theta^\kappa} 
	\frac{\partial \hat{Y}^j}{\partial \theta^\eta},
$$
where Greek indices refer to the coordinate chart $\hat{X}(t)^{-1}$ of $\Gamma(t)$
and Latin indices to the chart $\mathcal{C}$ of $\M$. 
Please note that the charts $\mathcal{C}$ and $\hat{X}(t)^{-1}$ have the same image
and that $\hat{Y}^i(\theta) = (\mathcal{C} \circ \hat{Y})^i(\theta) = \theta^i$.
Hence, we indeed have
$$
	\hat{h}_{\kappa \eta} = m_{ij} \delta_\kappa^i \delta_\eta^j.
$$ 
Since the metric $e(t)$ satisfy $e(t) = \hat{y}(t)^\ast \hat{g}(t)$,
a similar relation holds between the components $e_{\kappa \eta}$ of $e(t)$
and the components $\hat{g}_{ij}$ of $\hat{g}(t)$. Using these relations, a short calculation
in local coordinates gives 
\begin{align*}
	\big( \nabla \hat{x} (w) \big) \circ \hat{Y}
	&= \big( \nabla \hat{x} (w) \big) \circ \mathcal{C}^{-1} 
	= W^k \frac{\partial \hat{X}}{\partial \theta^k}
	= \hat{g}^{ij} \left( \Gamma(m)^k_{ij} - \Gamma(\hat{g})^k_{ij} \right) 
		\frac{\partial \hat{X}}{\partial \theta^k}
\\	&= 	e^{\iota \eta} \left( \Gamma(\hat{h})^\kappa_{\iota \eta} 
									- \Gamma(e)^\kappa_{\iota \eta} \right) 
		\frac{\partial U}{\partial \theta^\kappa}.	
\end{align*}
We define the tangential vector field $z(t)$ on $\Gamma(t)$ locally by 
$z \circ \hat{X} := Z^\kappa \frac{\partial \hat{X}}{\partial \theta^\kappa}$
with components
$$
	Z^\kappa := e^{\iota \eta} \big( \Gamma(\hat{h})^\kappa_{\iota \eta} 
									- \Gamma(e)^\kappa_{\iota \eta} \big). 
$$   
We then have
$$
	\big( \nabla \hat{x} (w) \big) \circ \hat{y} = \nabla u (z),
$$
and thus,
$$
	\partial^\bullet u = v - \tfrac{1}{\alpha} \nabla u (z).
$$
In order to solve the above equation, we need an efficient way to compute the vector field $z$.
Since $(\M, m)$ is isometrically embedded into $(\mathbb{R}^k, \mathfrak{e})$,
the local components of the metric $\hat{h}(t) = \hat{y}(t)^\ast m$
satisfy
$$
	\hat{h}_{\kappa \eta} = \frac{\partial \hat{Y}}{\partial \theta^\kappa}
		\cdot \frac{\partial \hat{Y}}{\partial \theta^\eta},
$$
where the Euclidean metric $\mathfrak{e}$ is denoted by $\cdot$ for the sake of convenience.
We now write $Z^\kappa$ by
$$
	Z^\kappa 
	= e^{\iota \eta} \big( \Gamma(\hat{h})^\rho_{\iota \eta} - \Gamma(e)^\rho_{\iota \eta} \big)
	\frac{\partial \hat{Y}}{\partial \theta^\rho} \cdot 
	\frac{\partial \hat{Y}}{\partial \theta^\sigma} \hat{h}^{\sigma \kappa}.
$$   
A short calculation shows that
$$
  e^{\iota \eta} \Gamma(\hat{h})^\rho_{\iota \eta} 
  \frac{\partial \hat{Y}}{\partial \theta^\rho} \cdot
  \frac{\partial \hat{Y}}{\partial \theta^\sigma}
  = e^{\iota \eta} \frac{\partial^2 \hat{Y}}{\partial \theta^\iota \partial \theta^\eta}
  \cdot \frac{\partial \hat{Y}}{\partial \theta^\sigma},
$$
and thus,
$$
	Z^\kappa 
	= e^{\iota \eta} \left( \frac{\partial^2 \hat{Y}}{\partial \theta^\iota \partial \theta^\eta} 
	- \Gamma(e)^\rho_{\iota \eta} \frac{\partial \hat{Y}}{\partial \theta^\rho} \right)
	\cdot 
	\frac{\partial \hat{Y}}{\partial \theta^\sigma} \hat{h}^{\sigma \kappa}.
$$
Using the notation of the gradient with respect to the metric $\hat{h}(t)$ and of the Laplacian with respect to the metric $e(t)$, see Section \ref{Section_notations}, we conclude that 
$$
	z = (grad_{\hat{h}(t)} \hat{y}(t))^T \Delta_{e(t)} \hat{y}(t),
$$
with 
$$
	(grad_{\hat{h}(t)} \hat{y}(t))^T = \left(
			grad_{\hat{h}(t)} \hat{y}^1(t),
			\hdots,
			grad_{\hat{h}(t)} \hat{y}^k(t)
		\right),
$$
where $(\hat{y}^1(t), \ldots, \hat{y}^k(t))^T$ are the components of $\hat{y}(t)$
with respect to the coordinates of the ambient space.
We define $\zeta(t) := \Delta_{e(t)} \hat{y}(t)$ and finally obtain the system
\begin{align*}
	& \partial^\bullet u = v 
	- \tfrac{1}{\alpha} \nabla u \big( (grad_{\hat{h}(t)} \hat{y})^T \zeta \big),
	\\
	&\zeta - \Delta_{e(t)} \hat{y} = 0.
\end{align*}
The vector field $(grad_{\hat{h}(t)} \hat{y})^T \zeta$ must be tangential to the boundary
of $\Gamma(t)$, since this is true for $\nabla u (z) = z$.  
However, this is equivalent to the fact that 
$(\lambda \circ \hat{y}) \cdot \zeta = 0$, where $\lambda$ is a unit co-normal 
vector field on $\partial \M$ with respect to the metric $m$. 
Please recall that a unit co-normal vector field on $\partial \M$
with respect to the metric $g(t)$ is denoted by $\mu(t)$.
This equivalence follows easily from the fact that
$\sum_{j=1}^k \hat{h}(p,t)(grad_{\hat{h}(t)} \hat{y}^j, \xi) \zeta_j = \nabla_{\xi} \hat{y} \cdot \zeta$ 
for all tangent vectors $\xi$ of $\Gamma(t)$ at $p$ and that
$\nabla_{\xi} \hat{y} = \lambda \circ \hat{y}$ (or respectively, $\nabla_{\xi} \hat{y} = -\lambda \circ \hat{y}$)
if $\xi$ is a unit co-normal on $\partial \Gamma(t)$
with respect to $\hat{h}(t)$. 
The latter point is a direct consequence of $\hat{h}(t) := \hat{y}(t)^\ast m$
and $\hat{y}(\partial \Gamma(t), t) \subset \partial \M$. 
\end{proof}
Note, that, in general, the vector field $\zeta$ is not tangential to $\M$.

\subsection{Weak formulation}
\label{Section_weak_formulation}
In order to derive a weak formulation of (\ref{equation_for_u}) and (\ref{equation_for_zeta}), we multiply 
by test functions $\varphi \in L^2(\Gamma(t), \mathbb{R}^n)$, and respectively, by
$\phi \in \mathcal{S} := \left\{ H^{1,2}(\Gamma(t), \mathbb{R}^k) \; | \; 
(\lambda \circ \hat{y}) \cdot \phi = 0 \; \textnormal{on $\partial \Gamma(t)$} \right\}$.
On the boundary $\partial \Gamma(t)$, we multiply the equation for the identity map $u$
by a test function $\eta \in L^2(\partial \Gamma(t), \mathbb{R}^n)$.
We then integrate on $\Gamma(t)$ and on $\partial \Gamma(t)$ with respect to the Riemannian volume forms associated with the metric $e(t)$ on $\Gamma(t)$.
By abuse of notation, $do$ therefore denotes the Riemannian volume form induced by $e(t)$ on $\Gamma(t)$ and also on its boundary $\partial \Gamma(t)$. 
Altogether, we obtain
\begin{align}
& \left.
\begin{aligned}
	&\int_{\Gamma(t)} \partial^\bullet u \cdot \varphi
	+ \tfrac{1}{\alpha} \nabla u \big( (grad_{\hat{h}(t)} \hat{y})^T \zeta \big) \cdot \varphi \; do
	= \int_{\Gamma(t)} v \cdot \varphi \; do,
	\quad \forall \varphi \in L^2(\Gamma(t), \mathbb{R}^n),
	\\
	&\int_{\partial \Gamma(t)} \partial^\bullet u \cdot \eta
	+ \tfrac{1}{\alpha} \nabla u \big( (grad_{\hat{h}(t)} \hat{y})^T \zeta \big) \cdot \eta \; do
	= \int_{\partial \Gamma(t)} v \cdot \eta \; do,
	\quad \forall \eta \in L^2(\partial \Gamma(t), \mathbb{R}^n),
\end{aligned}
\right\}	
	\label{weak_formulation_for_u}
	\\
	&\int_{\Gamma(t)} \zeta \cdot \phi \; do
	+ \int_{\Gamma(t)} grad_{e(t)} \hat{y} : grad_{e(t)} \phi \; do = 0,
	\quad \forall \phi \in \mathcal{S},
	\label{weak_formulation_for_zeta}
\end{align}
where $grad_{e(t)} \hat{y} : grad_{e(t)} \phi 
:= \sum_{j=1}^k e(t)( grad_{e(t)} \hat{y}^j, grad_{e(t)} \phi^j)$	.
The equation (\ref{weak_formulation_for_zeta}) follows from the fact that
$$
	\sum_{j=1}^k \int_{\partial \Gamma(t)} e(t)(grad_{e(t)} \hat{y}^j, \nu(t)) \phi^j \; do 
	= \int_{\partial \Gamma(t)} \nabla_{\nu(t)} \hat{y} \cdot \phi \; do
	= \int_{\partial \Gamma(t)} (\nabla_{\nu(t)} \hat{y} \cdot (\lambda \circ \hat{y})) 
		(\lambda \circ \hat{y}) \cdot \phi \; do
	= 0,
$$
where we have used (\ref{boundary_condition_nabla_y}) and $(\lambda \circ \hat{y}) \cdot \phi = 0$.

Equation (\ref{weak_formulation_for_zeta}) is the only leftover from the harmonic map heat flow.
In particular, the mixed boundary conditions on $\partial \M$ are hidden in this equation.
For example, the condition $\nabla_{\mu(t)} \psi \perp_m \partial \M$ was first reformulated
as $\nabla_{\nu(t)} \hat{y} \perp_{m} \partial \M$, which we then used in order
to derive the weak formulation. The condition $\psi(\partial \M, t) \subset \partial \M$
on the other hand led to the condition $(\lambda \circ \hat{y}) \cdot \zeta = 0$.
We will take this equation into account by solving (\ref{weak_formulation_for_zeta})
in an appropriate space. For the moment, we just observe that $(\lambda \circ \hat{y}) \cdot \zeta = 0$ if $\zeta \in \mathcal{S}$.
The harmonic map heat flow itself will never be computed in our approach.

\subsection{Reformulation using tangential gradients}
\label{Section_reformulation_tangential_gradients}
In order to discretize the above weak formulation in space, we first rewrite it
using tangential gradients. Since the definition of the tangential gradient does not make use of any 
coordinate charts, it can be easily generalized to simplicial meshes.
The discretization of a weak formulation based on tangential gradients 
is hence straightforward. 
\begin{defn}
Let $f$ be a differentiable function on the submanifold $\Gamma(t) \subset \mathbb{R}^n$.
The tangential gradient of $f$ in $p \in \Gamma(t)$ is defined by
\begin{equation}
	\nabla_{\Gamma(t)} f(p) := (P \nabla \tilde{f})(p),
	\label{definition_tangential_gradient}
\end{equation} 
where $\nabla \tilde{f}$ is the usual gradient in $\mathbb{R}^n$ of a differentiable extension
$\tilde{f}$ of $f$ to an open neighbourhood of $p$. 
Here, $P$ denotes the tangential projection onto the tangent bundle of $\Gamma(t)$.
\end{defn}
It is easy to show that this definition
does not depend on the choice of the extension, see \cite{DDE05}.
Since we have 
$$
	(\nabla_{\Gamma(t)} f) \circ \hat{X} = e^{\kappa \sigma} 
	\frac{\partial F}{\partial \theta^\sigma}
	\frac{\partial \hat{X}}{\partial \theta^\kappa},
$$
with $F = f \circ \hat{X}$, it follows that
$$
	\nabla_{\Gamma(t)} f = grad_{e(t)} f,
$$
and in particular,
$$
	\xi \cdot \nabla_{\Gamma(t)} f = e(t)(\xi, grad_{e(t)} f) = \nabla_{\xi} f,
$$
for all tangent vector fields $\xi$ on $\Gamma(t)$.
In order to find a similar expression for $grad_{\hat{h}(t)} f$, we introduce the following
representation of the metric $\hat{h}(t)$.
\begin{defn}
The map $\hat{H}: \Omega \rightarrow \mathbb{R}^{n \times n}$ is defined by
$$
	\hat{H} := (\nabla_{\Gamma(t)} \hat{y})^T \nabla_{\Gamma(t)} \hat{y}
	+ \unit - P.
$$
\end{defn}
The map $\hat{H}(p)$ acts as a linear isomorphism on the tangent space of $\Gamma(t)$ in the point $p$,
and on the corresponding normal space, it is the identity. This implies that $\hat{H}$ is
invertible in each point $p$. Furthermore, we find the following results.
\begin{lem}
\label{H_represents_h}
In local coordinates the following identity holds
$$
	\frac{\partial \hat{X}}{\partial \theta^\kappa} \cdot (\hat{H} \circ \hat{X})
	\frac{\partial \hat{X}}{\partial \theta^\sigma} = \hat{h}_{\kappa \sigma}.
$$
\end{lem}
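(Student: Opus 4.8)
The plan is to substitute the definition $\hat{H} = (\nabla_{\Gamma(t)} \hat{y})^T \nabla_{\Gamma(t)} \hat{y} + \unit - P$ into the bilinear form $\frac{\partial \hat{X}}{\partial \theta^\kappa} \cdot (\hat{H} \circ \hat{X}) \frac{\partial \hat{X}}{\partial \theta^\sigma}$ and split it into the three contributions coming from the summands $(\nabla_{\Gamma(t)} \hat{y})^T \nabla_{\Gamma(t)} \hat{y}$, $\unit$, and $-P$. The observation that makes two of these collapse is that $\frac{\partial \hat{X}}{\partial \theta^\kappa}$ and $\frac{\partial \hat{X}}{\partial \theta^\sigma}$ are, by construction, tangent vectors of $\Gamma(t)$ at the point $\hat{X}(\theta)$, so the tangential projection $P$ acts as the identity on them; consequently $\frac{\partial \hat{X}}{\partial \theta^\kappa} \cdot (\unit - P)\frac{\partial \hat{X}}{\partial \theta^\sigma} = 0$ and only the first summand survives.

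For that first summand I would use the pointwise identity $(\nabla_{\Gamma(t)} \hat{y})^T \nabla_{\Gamma(t)} \hat{y}\, b = \sum_{j=1}^k (\nabla_{\Gamma(t)} \hat{y}^j)\,(\nabla_{\Gamma(t)} \hat{y}^j \cdot b)$, valid for every $b \in \mathbb{R}^n$, which follows directly from the convention $(\nabla_{\Gamma(t)} \hat{y})^T = (\nabla_{\Gamma(t)} \hat{y}^1, \dots, \nabla_{\Gamma(t)} \hat{y}^k)$ already fixed for the analogous object $grad_{\hat{h}(t)}\hat{y}$. Pairing with $\frac{\partial \hat{X}}{\partial \theta^\kappa}$ then turns the bilinear form into $\sum_{j=1}^k \big(\tfrac{\partial \hat{X}}{\partial \theta^\kappa} \cdot \nabla_{\Gamma(t)} \hat{y}^j\big)\big(\tfrac{\partial \hat{X}}{\partial \theta^\sigma} \cdot \nabla_{\Gamma(t)} \hat{y}^j\big)$.

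Finally I would invoke the relation $\xi \cdot \nabla_{\Gamma(t)} f = \nabla_{\xi} f$ established just above for tangent vector fields $\xi$ on $\Gamma(t)$, applied with $\xi = \frac{\partial \hat{X}}{\partial \theta^\kappa}$ and $f = \hat{y}^j$. By the chain rule $\nabla_{\partial \hat{X}/\partial\theta^\kappa}\,\hat{y}^j = \frac{\partial (\hat{y}^j \circ \hat{X})}{\partial\theta^\kappa} = \frac{\partial \hat{Y}^j}{\partial\theta^\kappa}$, and likewise for $\sigma$. Substituting gives $\sum_{j=1}^k \frac{\partial \hat{Y}^j}{\partial\theta^\kappa}\frac{\partial \hat{Y}^j}{\partial\theta^\sigma} = \frac{\partial \hat{Y}}{\partial\theta^\kappa}\cdot\frac{\partial \hat{Y}}{\partial\theta^\sigma}$, which is exactly the local expression for $\hat{h}_{\kappa\sigma}$ derived in the proof of Proposition \ref{Proposition_for_identity_map} using the isometric embedding of $(\M,m)$ into $(\mathbb{R}^k,\mathfrak{e})$, completing the argument.

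The computation is essentially routine, so there is no real obstacle; the only point needing care is the bookkeeping of conventions --- that $\hat{X}(t)^{-1}$ is the coordinate chart of $\Gamma(t)$ with respect to which both $\hat{h}_{\kappa\sigma}$ and the tangent frame $\partial\hat{X}/\partial\theta^\kappa$ are expressed, so that the chain-rule identity in the last step lines up with the formula for $\hat{h}_{\kappa\sigma}$, together with keeping straight the column/row layout of $\nabla_{\Gamma(t)}\hat{y}$ when expanding $(\nabla_{\Gamma(t)}\hat{y})^T\nabla_{\Gamma(t)}\hat{y}$.
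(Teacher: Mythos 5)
Your proposal is correct and follows essentially the same route as the paper's proof: both drop the $\unit - P$ term on tangent vectors, expand $(\nabla_{\Gamma(t)}\hat{y})^T\nabla_{\Gamma(t)}\hat{y}$ as a sum over the components $\hat{y}^j$, and reduce to $\frac{\partial \hat{Y}}{\partial\theta^\kappa}\cdot\frac{\partial \hat{Y}}{\partial\theta^\sigma} = \hat{h}_{\kappa\sigma}$ via the isometric embedding. The only cosmetic difference is that you contract the metric factors through the identity $\xi\cdot\nabla_{\Gamma(t)}f = \nabla_\xi f$, whereas the paper writes out the $e_{\kappa\iota}e^{\iota\eta}$ contraction explicitly.
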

\begin{proof}
\begin{align*}
	\frac{\partial \hat{X}}{\partial \theta^\kappa} \cdot (\hat{H} \circ \hat{X})
	\frac{\partial \hat{X}}{\partial \theta^\sigma}
	&= \bigg( \frac{\partial \hat{X}}{\partial \theta^\kappa} \cdot
	          \frac{\partial \hat{X}}{\partial \theta^\iota}  \bigg) e^{\iota \eta} 
	   \bigg( \frac{\partial \hat{Y}}{\partial \theta^\eta}   \cdot 
	          \frac{\partial \hat{Y}}{\partial \theta^\gamma} \bigg) e^{\gamma \beta}
	   \bigg( \frac{\partial \hat{X}}{\partial \theta^\beta}  \cdot
		      \frac{\partial \hat{X}}{\partial \theta^\sigma} \bigg)
	\\
	&= e_{\kappa \iota} 	e^{\iota \eta} \hat{h}_{\eta \gamma} e^{\gamma \beta} e_{\beta \sigma}	      
	= \hat{h}_{\kappa \sigma}.	      
\end{align*}
\end{proof}
\begin{lem}
Let $f$ be a differentiable function on $\Gamma(t)$. Then we have
$$
	\hat{H}^{-1} \nabla_{\Gamma(t)} f = grad_{\hat{h}(t)} f.
$$
\end{lem}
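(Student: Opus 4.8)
The plan is to reduce the identity to the defining property of the Riemannian gradient: $grad_{\hat h(t)} f$ is the \emph{unique} tangent vector field on $\Gamma(t)$ satisfying $\hat h(t)(grad_{\hat h(t)} f,\xi)=(\nabla f)(\xi)$ for every tangent vector $\xi$ on $\Gamma(t)$. Since $\hat H=(\nabla_{\Gamma(t)}\hat y)^T\nabla_{\Gamma(t)}\hat y+\unit-P$ is symmetric and, as already observed right after its definition, restricts to a linear isomorphism of the tangent space of $\Gamma(t)$ and to the identity on the normal space, its inverse $\hat H^{-1}$ has the same block structure; in particular $\hat H^{-1}\nabla_{\Gamma(t)} f$ is a genuine tangent vector field on $\Gamma(t)$, hence a legitimate candidate for $grad_{\hat h(t)} f$.

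The first step is to restate Lemma \ref{H_represents_h} in coordinate-free form: for all tangent vectors $\xi,\eta$ on $\Gamma(t)$ one has $\hat h(t)(\xi,\eta)=\xi\cdot(\hat H\eta)$. Indeed, writing $\xi=\xi^\kappa\frac{\partial\hat X}{\partial\theta^\kappa}$ and $\eta=\eta^\sigma\frac{\partial\hat X}{\partial\theta^\sigma}$ in a chart and using bilinearity, the right-hand side equals $\xi^\kappa\eta^\sigma\,\frac{\partial\hat X}{\partial\theta^\kappa}\cdot(\hat H\circ\hat X)\frac{\partial\hat X}{\partial\theta^\sigma}=\xi^\kappa\eta^\sigma\hat h_{\kappa\sigma}=\hat h(t)(\xi,\eta)$ by Lemma \ref{H_represents_h}.

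The second step is the computation itself. For an arbitrary tangent vector field $\xi$ on $\Gamma(t)$,
\[
\hat h(t)\big(\hat H^{-1}\nabla_{\Gamma(t)} f,\,\xi\big)
=\big(\hat H^{-1}\nabla_{\Gamma(t)} f\big)\cdot(\hat H\xi)
=\nabla_{\Gamma(t)} f\cdot\big(\hat H^{-1}\hat H\xi\big)
=\nabla_{\Gamma(t)} f\cdot\xi
=\nabla_{\xi} f=(\nabla f)(\xi),
\]
where the first equality is step one, the second uses that $\hat H$ (hence $\hat H^{-1}$) is symmetric, and the last two equalities are the identity $\xi\cdot\nabla_{\Gamma(t)} f=\nabla_{\xi} f$ recorded above. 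Since $\hat H^{-1}\nabla_{\Gamma(t)} f$ is tangential, the uniqueness statement in the definition of the Riemannian gradient yields $\hat H^{-1}\nabla_{\Gamma(t)} f=grad_{\hat h(t)} f$.

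There is no real obstacle here; the only points needing a little care are the passage from the coordinate identity of Lemma \ref{H_represents_h} to the bilinear-form statement and the verification that $\hat H$ respects the tangent/normal splitting so that $\hat H^{-1}$ is well defined and tangent-preserving, both of which were already settled when $\hat H$ was introduced. As an alternative one can argue entirely in local coordinates: from $(\nabla_{\Gamma(t)} f)\circ\hat X=e^{\kappa\sigma}\frac{\partial F}{\partial\theta^\sigma}\frac{\partial\hat X}{\partial\theta^\kappa}$ and $(grad_{\hat h(t)} f)\circ\hat X=\hat h^{\kappa\sigma}\frac{\partial F}{\partial\theta^\sigma}\frac{\partial\hat X}{\partial\theta^\kappa}$, expand $(\hat H\circ\hat X)\frac{\partial\hat X}{\partial\theta^\kappa}=e^{\rho\lambda}\hat h_{\lambda\kappa}\frac{\partial\hat X}{\partial\theta^\rho}$ (obtained by pairing with $\frac{\partial\hat X}{\partial\theta^\mu}$ and invoking Lemma \ref{H_represents_h}), and contract to get $\hat H\,grad_{\hat h(t)} f=\nabla_{\Gamma(t)} f$.
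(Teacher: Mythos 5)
Your proof is correct, but it takes a genuinely different route from the paper's. The paper works entirely in local coordinates: it first produces the explicit inverse
$\hat{H}^{-1} \circ \hat{X} = \frac{\partial \hat{X}}{\partial \theta^\kappa} \hat{h}^{\kappa \sigma} \frac{\partial \hat{X}}{\partial \theta^\sigma} + (\unit - P) \circ \hat{X}$
by checking that this expression composed with $\hat{H}$ gives $\unit$, and then applies it directly to the coordinate expression of $\nabla_{\Gamma(t)} f$ to land on the coordinate formula for $grad_{\hat{h}(t)} f$. You instead avoid inverting $\hat{H}$ altogether: you recast Lemma \ref{H_represents_h} as the coordinate-free statement $\hat{h}(t)(\xi,\eta) = \xi \cdot (\hat{H}\eta)$ on tangent vectors, observe that $\hat{H}$ is symmetric and respects the tangent/normal splitting (both facts already recorded when $\hat{H}$ was introduced, though the symmetry of $P$, i.e.\ that the tangential projection is the orthogonal one, is worth saying explicitly since it is what makes $\hat{H}^{-1}$ self-adjoint), and then verify the defining property $\hat{h}(t)(\hat{H}^{-1}\nabla_{\Gamma(t)} f, \xi) = \nabla_\xi f$, concluding by uniqueness of the Riemannian gradient. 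Your argument is more invariant and makes transparent why the identity holds ($\hat{H}$ is precisely the Euclidean representation of $\hat{h}$ on the tangent bundle); the paper's computation buys the explicit closed form for $\hat{H}^{-1}$, which mirrors how $\hat{H}^m_h$ is actually handled elementwise in the discrete scheme. Your closing alternative, checking $\hat{H}\, grad_{\hat{h}(t)} f = \nabla_{\Gamma(t)} f$ in coordinates, is essentially the paper's computation read backwards and is also fine.
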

\begin{proof}
 From 
\begin{align*}
	\hat{H} \circ \hat{X} \bigg( \frac{\partial \hat{X}}{\partial \theta^\kappa} 
		\hat{h}^{\kappa \sigma} \frac{\partial \hat{X}}{\partial \theta^\sigma}
		+ (\unit - P) \circ \hat{X} \bigg)
	 &= \frac{\partial \hat{X}}{\partial \theta^\iota} e^{\iota \eta} \hat{h}_{\eta \gamma}
	 	e^{\gamma \beta} e_{\beta \kappa} \hat{h}^{\kappa \sigma} 
	 	\frac{\partial \hat{X}}{\partial \theta^\sigma}
	    + (\unit - P) \circ \hat{X}
	 \\
	 &= \frac{\partial \hat{X}}{\partial \theta^\iota} e^{\iota \sigma} 
	 	\frac{\partial \hat{X}}{\partial \theta^\sigma}
	    + (\unit - P) \circ \hat{X}   
	  = \unit,  
\end{align*}
we conclude that 
$$
	\hat{H}^{-1} \circ \hat{X} = \frac{\partial \hat{X}}{\partial \theta^\kappa} 
		\hat{h}^{\kappa \sigma} \frac{\partial \hat{X}}{\partial \theta^\sigma}
		+ (\unit - P) \circ \hat{X}.
$$
It follows that
$$
	(\hat{H}^{-1} \nabla_{\Gamma(t)} f) \circ \hat{X}
	= \frac{\partial \hat{X}}{\partial \theta^\kappa} 
		\hat{h}^{\kappa \sigma} \frac{\partial \hat{X}}{\partial \theta^\sigma}
		\cdot \frac{\partial \hat{X}}{\partial \theta^\iota} e^{\iota \eta} 
		\frac{\partial F}{\partial \theta^\eta}
	= \frac{\partial \hat{X}}{\partial \theta^\kappa} 
		\hat{h}^{\kappa \sigma} e_{\sigma \iota} e^{\iota \eta} 
		\frac{\partial F}{\partial \theta^\eta}	
	= \frac{\partial \hat{X}}{\partial \theta^\kappa} 
		\hat{h}^{\kappa \sigma}
		\frac{\partial F}{\partial \theta^\sigma}.		
$$
\end{proof}
\begin{remark}
Lemma \ref{H_represents_h} says that the map $\hat{H}$ is a global representation of the metric $\hat{h}$ on $\Gamma(t)$.
\end{remark}
Using the above results, we can rewrite (\ref{weak_formulation_for_u}) and 
(\ref{weak_formulation_for_zeta}) on the moving submanifold $\Gamma(t)$.
\begin{thm}
Under the assumptions of Propositions \ref{Prop_reparametrization} and \ref{Proposition_for_identity_map}, the identity map $u(t)$
on $\Gamma(t)$ satisfies
\begin{align}
& \left.
\begin{aligned}
	&\int_{\Gamma(t)} \partial^\bullet u \cdot \varphi 
	+ \tfrac{1}{\alpha} \nabla_{\Gamma(t)} u 
		\hat{H}^{-1} (\nabla_{\Gamma(t)} \hat{y})^T \zeta \cdot \varphi \; do
	= \int_{\Gamma(t)} v \cdot \varphi \; do,
	\quad \forall \varphi \in L^2(\Gamma(t), \mathbb{R}^n),
	\\
	&\int_{\partial \Gamma(t)} \partial^\bullet u \cdot \eta
	+ \tfrac{1}{\alpha} \nabla_{\Gamma(t)} u 
		\hat{H}^{-1} (\nabla_{\Gamma(t)} \hat{y})^T \zeta \cdot \eta \; do
	= \int_{\partial \Gamma(t)} v \cdot \eta \; do,
	\quad \forall \eta \in L^2(\partial \Gamma(t), \mathbb{R}^n),
\end{aligned}	
\right\}
	\label{weak_formulation_u_surface_gradients}
	\\
	&\int_{\Gamma(t)} \zeta \cdot \phi \; do
	+ \int_{\Gamma(t)} \nabla_{\Gamma(t)} \hat{y} : \nabla_{\Gamma(t)} \phi \; do = 0,
	\quad \forall \phi \in \mathcal{S},
	\label{weak_formulation_zeta_surface_gradients}
\end{align}
where $\nabla_{\Gamma(t)} \hat{y} : \nabla_{\Gamma(t)} \phi := \sum_{j=1}^k \nabla_{\Gamma(t)} \hat{y}^j \cdot \nabla_{\Gamma(t)} \phi^j$.
\end{thm}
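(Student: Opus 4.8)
The plan is to show that the two pairs of weak formulations have literally the same integrands, so that the theorem is an immediate consequence of Propositions \ref{Prop_reparametrization} and \ref{Proposition_for_identity_map} together with the elementary relations between the Riemannian gradients $grad_{e(t)}$, $grad_{\hat{h}(t)}$ and the tangential gradient $\nabla_{\Gamma(t)}$ established above. No new analysis is required; the whole point is to eliminate coordinate charts from (\ref{weak_formulation_for_u})--(\ref{weak_formulation_for_zeta}) so that the discretisation in the later sections becomes transparent.

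First I would rewrite equation (\ref{weak_formulation_zeta_surface_gradients}). Applying the identity $\nabla_{\Gamma(t)} f = grad_{e(t)} f$ componentwise with $f = \hat{y}^j$ and $f = \phi^j$, and using that the Euclidean inner product of $\mathbb{R}^n$ restricted to the tangent bundle of $\Gamma(t)$ is precisely the induced metric, i.e.\ $\xi\cdot\rho = e(t)(\xi,\rho)$ for tangent vectors $\xi,\rho$, one gets $grad_{e(t)}\hat{y} : grad_{e(t)}\phi = \sum_{j=1}^k \nabla_{\Gamma(t)}\hat{y}^j\cdot\nabla_{\Gamma(t)}\phi^j = \nabla_{\Gamma(t)}\hat{y} : \nabla_{\Gamma(t)}\phi$. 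Since the test space $\mathcal{S}$ is unchanged and the boundary term was discarded in (\ref{weak_formulation_for_zeta}) exactly via (\ref{boundary_condition_nabla_y}) and $(\lambda\circ\hat{y})\cdot\phi = 0$, equations (\ref{weak_formulation_for_zeta}) and (\ref{weak_formulation_zeta_surface_gradients}) coincide.

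Next I would treat the two equations in (\ref{weak_formulation_u_surface_gradients}). It suffices to show that the tangential vector fields under the integral signs agree, namely
\[
	\nabla u\big((grad_{\hat{h}(t)}\hat{y})^T\zeta\big)
	= \nabla_{\Gamma(t)} u\,\hat{H}^{-1}(\nabla_{\Gamma(t)}\hat{y})^T\zeta .
\]
Unpacking the transposed matrices consistently as $(\nabla_{\Gamma(t)}\hat{y})^T\zeta = \sum_{j=1}^k \nabla_{\Gamma(t)}\hat{y}^j\,\zeta_j$ and using linearity of the pointwise isomorphism $\hat{H}^{-1}$ together with the Lemma $\hat{H}^{-1}\nabla_{\Gamma(t)}f = grad_{\hat{h}(t)}f$ applied to each $f = \hat{y}^j$, I obtain $\hat{H}^{-1}(\nabla_{\Gamma(t)}\hat{y})^T\zeta = \sum_j grad_{\hat{h}(t)}\hat{y}^j\,\zeta_j = (grad_{\hat{h}(t)}\hat{y})^T\zeta$. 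This is exactly the vector field $z$ occurring in the proof of Proposition \ref{Proposition_for_identity_map}, hence tangential to $\Gamma(t)$ (and, on $\partial\Gamma(t)$, tangential to the boundary). Since $u$ is the identity map of $\Gamma(t)$, its differential $\nabla u$ acts as the identity on tangent vectors, and its tangential gradient $\nabla_{\Gamma(t)} u$ equals the tangential projection $P$, which likewise fixes tangent vectors; therefore both sides of the displayed equation equal $z$. Substituting this into the bulk and boundary identities of (\ref{weak_formulation_for_u}) yields the two equations of (\ref{weak_formulation_u_surface_gradients}).

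The only delicate point — and the closest thing to an obstacle — is bookkeeping in this second step: one must make sure that the $n\times k$ matrices $(\nabla_{\Gamma(t)}\hat{y})^T$ and $(grad_{\hat{h}(t)}\hat{y})^T$ are unpacked as the same sum over the $k$ ambient components of $\hat{y}$, that the pointwise linear map $\hat{H}^{-1}$ may be pulled through that sum, and that the resulting field is genuinely tangential so that composing with $\nabla u$, respectively with $\nabla_{\Gamma(t)} u = P$, changes nothing. Once this is verified, the theorem follows with no appeal to local coordinates, which is precisely the form needed for the surface finite element discretisation of the subsequent sections.
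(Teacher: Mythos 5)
Your proposal is correct and follows essentially the same route as the paper, which proves the theorem implicitly by combining the weak formulation (\ref{weak_formulation_for_u})--(\ref{weak_formulation_for_zeta}) with the identities $\nabla_{\Gamma(t)} f = grad_{e(t)} f$ and $\hat{H}^{-1}\nabla_{\Gamma(t)} f = grad_{\hat{h}(t)} f$; your observation that $\nabla_{\Gamma(t)} u = P$ fixes the (tangential) field $\hat{H}^{-1}(\nabla_{\Gamma(t)}\hat{y})^T\zeta$ is exactly the content of Remark \ref{Remark_more_implicit}. Nothing is missing.
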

\begin{remark}
We just observe that $\nabla_{\Gamma(t)} u \hat{H}^{-1} (\nabla_{\Gamma(t)} \hat{y})^T = \hat{H}^{-1} (\nabla_{\Gamma(t)} \hat{y})^T$, see also
Remark \ref{Remark_more_implicit}.
\end{remark}

\section{Numerical schemes for the DeTurck reparametrization}
\label{Discretization_section}
\subsection{Finite element surface}
\label{Section_finite_elements}
We now assume that the reference manifold $\M$ is approximated by a piecewise linear, polyhedral
manifold 
$$
	\M_h = \bigcup_{S \in \mathcal{T}(\M_h)} S \subset \mathbb{R}^k,
$$
where $\mathcal{T}(\M_h)$ is an admissible triangulation consisting of $(n-d)$-dimensional,
non-degenerated simplices $S$ in $\mathbb{R}^k$. 
The finite element space $V_h(\M_h)$ is the set of continuous,
piecewise linear functions on $\M_h$, that is
$$
	V_h(\M_h) := \left\{ \varphi_h \in C^0(\M_h) \; | \; 
		\varphi_{h|S} \; \textnormal{is a linear polynomial for all $S \in \mathcal{T}(\M_h)$} \right\}.
$$
For the time discretization, we introduce the notation $f^m := f(\cdot, m \tau)$
for the discrete time levels $\{ m \tau \; | \; m= 0, \ldots, M_\tau \}$
with time step size $\tau > 0$ and $M_\tau \tau < T$. In the following, we try to find
approximations 
$$
	\Gamma^m_h = \bigcup_{S^m_\Gamma \in \mathcal{T}(\Gamma^m_h)} S^m_\Gamma \subset \mathbb{R}^n
$$
of the submanifolds $\Gamma^m$ with $\Gamma_h^m = \hat{x}_h^m(\M_h)$ for some 
$\hat{x}^m_h \in V_h(\M_h)^{n}$. The map $\hat{x}^m_h$ is supposed to be a homeomorphism
of $\M_h$ onto $\Gamma^m_h$. Note that $S^m_\Gamma = \hat{x}_h^m(S)$ for some $S \in \mathcal{T}(\M_h)$.
The finite element spaces $V_h(\Gamma^m_h)$ and $V_h(\partial \Gamma^m_h)$
are the set of continuous, piecewise linear functions on $\Gamma_h^m$ and respectively, on $\partial \Gamma_h^m$. 
Furthermore, we define the following subspaces 
$$
\overset{\circ}{V}_h(\Gamma_h^m) := \left\{ \eta_h \in V_h(\Gamma_h^m) \; | \;
		\eta_h = 0 \; \textnormal{on $\partial \Gamma_h^m$} \right\}. 
$$
The inverse $\hat{y}^m_h := (\hat{x}_h^m)^{-1}$ is in $V_h(\Gamma_h^m)^k$.
We assume that $\lambda_h: \partial \M_h \rightarrow \mathbb{R}^k$
is an approximation of the unit co-normal $\lambda$ on $\partial \M$ which is 
piecewise constant on each $(n-d-1)$-dimensional boundary simplex of $\partial \M_h$. 
The finite element space $\mathcal{S}_h(\Gamma_h^m)$ is defined by
$$
	\mathcal{S}_{h}(\Gamma^m_h)
	= \left\{ \phi_h \in V_h(\Gamma^m_h)^k \; | \; (\lambda_h \circ \hat{y}^m_h) \cdot \phi_h = 0
	\; \textnormal{on $\partial \Gamma^m_h$} \right\}.
$$
Since the $(n-d)$-dimensional 
simplices of $\Gamma_h^m$ are affine to the standard simplex in $\mathbb{R}^{n-d}$,
the only remnant of the embedding is that the vertices of $\Gamma_h^m$ 
have position vectors in $\mathbb{R}^n$. 
As a result, standard finite element definitions, such as 
the definition of the linear Lagrange interpolation $I_h$, 
can be easily carried over to the submanifold case.
The tangential gradient on $\Gamma_h^m$ is defined piecewise 
on each simplex $S^m_\Gamma \in \mathcal{T}(\Gamma_h^m)$ like in (\ref{definition_tangential_gradient}).

We choose the time step size $\tau = C ~ h_{min}^2$, where $h_{min} := \min_{S^m_\Gamma \in \mathcal{T}(\Gamma_h^m)} h(S^m_\Gamma)$
is the minimal diameter of all simplices $S^m_\Gamma \subset \Gamma_h^m$ and $C > 0$ is some positive constant.
In simulations, an optimal constant $C$ can, for example, be determined for a relatively coarse mesh and then used on a finer mesh. 
Using this time step size, the algorithm proposed below turned out to be numerically stable in all of our experiments.

The main purpose of this work is to control the quality of the mesh as defined by the following measure of mesh quality
\begin{equation}
	\sigma_{max} := \max_{S^m_\Gamma \in \mathcal{T}(\Gamma^m_h)} \frac{h(S^m_\Gamma)}{\rho(S^m_\Gamma)},
	\label{definition_sigma_max}
\end{equation} 
Here, $h(S^m_\Gamma)$ denotes the diameter of $S^m_\Gamma$ and $\rho(S^m_\Gamma)$ is the radius of the largest ball contained in $S^m_\Gamma$.
For vanishing velocity $v=0$ and time steps $m \nearrow \infty$, we expect that 
\begin{equation*}
\frac{h(S^m_\Gamma)}{\rho(S^m_\Gamma)} \approx \frac{h(\hat{y}^m_h(S^m_\Gamma))}{\rho(\hat{y}^m_h(S^m_\Gamma))}
	\quad \textnormal{for all $S^m_\Gamma \in \mathcal{T}(\Gamma_h^m)$.}
\end{equation*}

\subsection{The discrete problems}
\label{Section_discrete_problems}

\subsubsection{Fixed reference triangulation}
A natural way to define the sequence of discrete embeddings $\hat{x}^{m+1}_h$ (which is not needed in the following scheme) 
would be $\hat{x}^{m+1}_h := u^{m+1}_h \circ \hat{x}_h^m$, where $u^{m+1}_h: \Gamma_h^m \rightarrow \Gamma_h^{m+1}$ is 
an appropriate approximation to $u(t)$ on $\Gamma(t)$, see below.
Since $\hat{y}^{m+1}_h := (\hat{x}^{m+1}_h)^{-1}$, this would imply that $\hat{y}^{m+1}_h = (\hat{x}^{m}_h)^{-1} \circ (u^{m+1}_h)^{-1}$,
and therefore, $\hat{y}^{m+1}_h = \hat{y}^{m}_h \circ (u^{m+1}_h)^{-1}$.
An important consequence of this observation is that we can totally get rid of the map $\hat{x}_h^m$ for all time steps $m \geq 1$,
when we use the last identity as the definition of $\hat{y}^{m+1}_h$.
We choose the time discretization to linearize the problem in each time step and propose the following algorithm for the computation of the system
(\ref{weak_formulation_u_surface_gradients}) and (\ref{weak_formulation_zeta_surface_gradients}).
\begin{alg}
\label{algo_DeTurck}
Let $\alpha \in (0, \infty)$. For a given $(n-d)$-dimensional submanifold 
$\Gamma_h^0 = \hat{x}_h^0(\M_h) \subset \mathbb{R}^{n}$ with $\hat{x}^0_h \in V_h(\M_h)^{n}$, set 
$\hat{y}_h^0 := (\hat{x}_h^0)^{-1} 
\in V_h(\Gamma_h^0)^{k}$. For the discrete time levels $m=0, \ldots, M_\tau -1$ do
\begin{enumerate}
\item[(i)] Compute the solution $\zeta_h^m \in \mathcal{S}_h({\Gamma}_h^m)$ of
\begin{equation}
	\int_{{\Gamma}^m_h} \zeta_h^m \cdot \phi_h \; do
	+ \int_{{\Gamma}^m_h} \nabla_{{\Gamma}^m_h} \hat{y}^m_h : \nabla_{{\Gamma}^m_h} \phi_h \; do 
	= 0, \quad \forall \phi_h \in \mathcal{S}_h(\Gamma^m_h)
	\label{equation_for_zeta_h}
\end{equation}
\item[(ii)] Then determine the solution
$u^{m+1}_{h} \in V_{h}({\Gamma}_h^m)^{n}$ of
\begin{equation}
\left.
\begin{aligned}
	& 
	\int_{\Gamma_h^m} \tfrac{1}{\tau} I_h (u^{m+1}_{h} \cdot \varphi_h)  
		 + \tfrac{1}{\alpha } \sum_{\kappa = 1}^n \sum_{\sigma = 1}^k
		 ((\hat{H}^m_h)^{-1} \nabla_{{\Gamma}_h^m} \hat{y}_{h, \sigma}^{m})_{\kappa} 
		 I_h \big({\tilde{\zeta}}^{m,\sigma}_h \varphi_h^\kappa \big)\; do
	\\	 
	& \qquad = \int_{\Gamma_h^m} I_h(v^{m} \cdot \varphi_h) + \tfrac{1}{\tau} I_h (\tilde{u}^{m}_h \cdot \varphi_h) \; do,
	\quad \forall \varphi_h \in \overset{\circ}{V}_h({\Gamma}_h^m)^{n}
	\\	
	& 
	\int_{\partial \Gamma_h^m} \tfrac{1}{\tau} I_h (u^{m+1}_{h} \cdot \eta_h)  
		 + \tfrac{1}{\alpha } \sum_{\kappa = 1}^n \sum_{\sigma}^k 
		 ((\hat{H}^m_h)^{-1} \nabla_{{\Gamma}_h^m} \hat{y}_{h, \sigma}^{m})_{\kappa} 
		 I_h \big({\tilde{\zeta}}^{m,\sigma}_h (T_h^m \eta_h)^\kappa \big)\; do
	\\	 
	& \qquad = \int_{\partial \Gamma_h^m} I_h(v^{m} \cdot \eta_h) + \tfrac{1}{\tau} I_h (\tilde{u}^{m}_h \cdot \eta_h) \; do,
	\quad \forall \eta_h \in V_h({\partial \Gamma}_h^m)^{n}
\end{aligned}
\right\}
\label{equation_for_u_h}
\end{equation}
Here, $\tilde{u}_h^m = id_{\Gamma_h^m}$ is the identity map on $\Gamma_h^m$. The map $\hat{H}^m_h$ on ${\Gamma}_h^m$ is defined by
$$
	\hat{H}^m_h := (\nabla_{{\Gamma}_h^m} \hat{y}^m_h)^T \nabla_{{\Gamma}_h^m} \hat{y}^m_h
	+ \unit - P^m_h,
$$
where $(P_h^m)_{|S^m_\Gamma}$ is the (constant) tangential projection onto the tangent space of 
the simplex $S^m_\Gamma \subset \Gamma_h^m$. The value of ${\tilde{\zeta}}^m_h$ in the vertex $p_j$ of $\Gamma_h^m$ is defined by
$${\tilde{\zeta}}^m_h(p_j):= P_{\M}(\hat{y}^m_h(p_j)) \zeta^m_h(p_j),$$ 
where $P_{\M}(\hat{y}^m_h(p_j))$ is the tangential projection onto the tangent space of the reference manifold $\M$ in the point $\hat{y}^m_h(p_j)$.
The projection $T_h^m(p_j)$ onto the tangent space of the discrete boundary $\partial \Gamma_h^m$ is defined by
$$
	T_h^m(p_j) := 
	\left\{
	\begin{aligned}
		\frac{\sum_{S} \vec{\tau}_h^m(S) }{|\sum_{S} \vec{\tau}_h^m(S)|} \otimes \frac{\sum_{S} \vec{\tau}_h^m(S) }{|\sum_{S} \vec{\tau}_h^m(S)|}, 
		\quad \textnormal{if $n-d = 2$,}
	\\	
		\frac{\sum_{S} T_h^m(S) |S|}{\sum_{S} |S|},		
	    \quad \textnormal{if $n-d > 2$,}
	\end{aligned}
	\right. 
$$
for all vertices $p_j \in \partial \Gamma_h^m$. Here, 
the sum is over all $n-d-1$-dimensional boundary simplices $S \subset \partial \Gamma_h^m$ adjacent to the boundary vertex $p_j$. 
$\vec{\tau}_h^m(S)$ is a unit tangent vector to the boundary simplex $S$, where all tangent vectors in the above 
sum are chosen such that $\vec{\tau}_h^m(S) \cdot \vec{\tau}_h^m(S') \geq 0$ for two different boundary simplices $S$ and $S'$
belonging to $p_j$. The map $T_h^m(S)$ is the projection onto the tangent space of the boundary simplex $S$.
\item[(iii)]
The discrete submanifold $\Gamma_h^{m+1} \subset \mathbb{R}^n$ is then defined by
$$
	\Gamma_h^{m+1} := u_h^{m+1}(\Gamma_h^m),
$$
and finally, we set 
$$
	\hat{y}^{m+1}_h := \hat{y}^m_h \circ (u_h^{m+1})^{-1}. 
$$
\end{enumerate}
\end{alg}
We introduced the projection $T_h^m$ onto the tangent space of the discrete boundary $\partial \Gamma_h^m$ for stability reasons.
\begin{remark}
\label{Remark_lift_onto_the_next_discrete_surface} 
An important feature of our scheme is that it is, in fact, not necessary to compute the inverse of $u_h^{m+1}$.
This can be seen as follows: It turns out that the components
of the map $\hat{y}^{m+1}_h$ with respect to the Lagrange finite element basis on $\Gamma_h^{m+1}$ 
are the same as the components
of $\hat{y}^{m}_h$ with respect to the corresponding basis on $\Gamma_h^{m}$. 
To be more precise: The components of $\hat{y}_h^{m}$ with respect to the Lagrange basis on $\Gamma_h^m$
are given by the position vectors of the mesh vertices of $\M_h$, which are constant. 
Therefore, $\hat{y}^m_h$ is described by a component vector which is independent of $m$.
However, note that the map $\hat{y}_h^m$ itself changes in time, since the finite element basis changes when $\Gamma_h^{m}$ is updated.
\end{remark}
\begin{remark}
\label{Remark_more_implicit}
The linear system (\ref{equation_for_u_h}) could be made more implicit by replacing the term
$$
	(\hat{H}^m_h)^{-1} \nabla_{\Gamma_h^m} \hat{y}_{h, \sigma}^m)_\kappa 
		I_h \big( {\tilde{\zeta}}^{m,\sigma}_h \varphi_h^\kappa \big)
$$
by the term
$$
	(\nabla_{\Gamma_h^m} u^{m+1}_h 
		(\hat{H}^m_h)^{-1} \nabla_{\Gamma_h^m} \hat{y}_{h, \sigma}^m)_\kappa 
		I_h \big( {\tilde{\zeta}}^{m,\sigma}_h \varphi_h^\kappa \big).
$$ 
\end{remark}
\begin{remark}
\label{remark_regularizing_term}
In order to be able to choose larger time steps $\tau$
in the above algorithm, one could add a regularizing term to equation (\ref{equation_for_zeta_h}),
that is: Find $\zeta_h^m \in \mathcal{S}_h({\Gamma}_h^m)$ such that
$$
	\int_{{\Gamma}^m_h} \zeta_h^m \cdot \phi_h + \varepsilon \nabla_{{\Gamma}^m_h} \zeta^m_h : \nabla_{{\Gamma}^m_h} \phi_h \; do
	+ \int_{{\Gamma}^m_h} \nabla_{{\Gamma}^m_h} \hat{y}^m_h : \nabla_{{\Gamma}^m_h} \phi_h \; do 
	= 0, \quad \forall \phi_h \in \mathcal{S}_h(\Gamma^m_h),
$$
where $\varepsilon > 0$ must be chosen sufficiently small to ensure that the redistribution of the mesh points still works.
A similar idea was used for the approximation of the Ricci curvature in \cite{Fr13}
and for the approximation of the mean curvature vector in \cite{He09}.
\end{remark}

As demonstrated in the next section, the above algorithm is able to produce good meshes
for $\Gamma_h^m$, that is meshes with relatively small values of $\sigma_{max}$, provided that the parameter $\alpha$
is chosen sufficiently small and that the quality of the mesh $\M_h$ is sufficiently good (that is 
the value of the quantity $\sigma_{max}$ has to be relatively small for the reference mesh $\M_h$)

\subsubsection{Refinement and coarsening of the reference triangulation}
The redistribution of the mesh points induced by the DeTurck trick also leads to 
simplices $S^m_\Gamma \subset \Gamma_h^m$, which differ strongly with respect to their volume (area) 
after a certain number of time steps. 
The following algorithm complement the above scheme with a refinement and coarsening strategy,
which keeps the volume (area) of the simplices approximately uniform.
\begin{alg}{(Mesh refinement and coarsening strategy)}
\label{algo_refinement_and_coarsening_strategy}
Define $A^m_{target} = |\Gamma_h^m|/N(\mathcal{T}(\Gamma^0_h))$, where $N(\mathcal{T}(\Gamma^0_h))$ denotes
the number of simplices in $\mathcal{T}(\Gamma_h^0)$. Choose $T_{adapt} \in [\tau, T)$.
If $m \tau < r T_{adapt} \leq (m+1) \tau$ for some $r \in \mathbb{N}_0$,
we mark the simplices $S^{m+1}_\Gamma \in \mathcal{T}(\Gamma_h^{m+1})$ as follows:
\begin{itemize}
\item[--] Mark $S^{m+1}_\Gamma$ for one refinement step if $|S^{m+1}_\Gamma| > 2 A^{m+1}_{target}$.
\item[--] Mark $S^{m+1}_\Gamma$ for one coarsening step if $|S^{m+1}_\Gamma| < A^{m+1}_{target}/ 2$.
\end{itemize} 
Then all simplices marked for refinement are bisected once if they have 
a compatible neighbour also marked for refinement.
Otherwise, a recursive refinement of adjacent elements with an incompatible refinement edge is applied.
After the refinement procedure 
all simplices marked for coarsening are coarsened if all neighbour elements which would be affected
by the coarsening are also marked for coarsening; see \cite{SS05} for a detailed describtion of
the ALBERTA refinement and coarsening routines. 
(Two simplices marked for coarsening are compatible if they were produced by bisection
from a common "parent"-simplex. 
Coarsening is therefore the inverse of refinement in the sense that 
two compatible simplices marked for coarsening 
are replaced by their parent. In particular, 
the vertex, which was created during the refinement, is again deleted in the coarsening step.) 
While vertices are deleted in the coarsening step, new vertices are produced during refinement.
In the interior the coordinates of the new vertices are just given by
the midpoints of the corresponding refinement edges; see \cite{SS05} for details.
For the vertices at the boundary we apply a geometrically consistent mesh modification scheme; see \cite{BNP10}:
Before the refinement the discrete mean curvature vector 
$\vec{\kappa}_h^m \in V_h(\partial \Gamma^m_h)^n$ of the boundary $\partial \Gamma^m_h$ 
is determined by the equation
\begin{align*}
	\int_{\partial \Gamma_h^m} I_h (\vec{\kappa}_h^m \cdot \varphi_h) \; do 
	= \int_{\partial \Gamma_h^m} \nabla_{\partial \Gamma^m_h} id_{\partial \Gamma^m_h} 
	: \nabla_{\partial \Gamma^m_h} \varphi_h \; do,
	\quad \forall \varphi_h \in V_h({\partial \Gamma}_h^m)^{n}.
\end{align*}
During mesh refinement this vector is interpolated linearly. The coordinates of the old and new 
vertices $p_j$
at the boundary of $\partial \Gamma^m_h$ are given by the old coordinates and the coordinates
of the midpoints of the refinement edges, respectively. The map 
$u_h^m \in V_h(\partial \Gamma_h^m)$ is then determined by
\begin{align*}
	\int_{\partial \Gamma_h^m} \nabla_{\partial \Gamma^m_h} u_h^{m}
	: \nabla_{\partial \Gamma^m_h} \varphi_h \; do
	= \int_{\partial \Gamma_h^m} I_h (\vec{\kappa}_h^m \cdot \varphi_h) \; do,
	\quad \forall \varphi_h \in V_h({\partial \Gamma}_h^m)^{n},
\end{align*}
and $\int_{\partial \Gamma_h^m} u^m_h do = \int_{\partial \Gamma_h^m} id_{| \partial \Gamma_h^m} do$.
Finally, the new coordinates of the boundary vertices $p_j$ are set to be $u_h^m(p_j)$.
During the refinement step the values of $\hat{y}_h^m$ at the new vertices are
determined by Lagrange interpolation. (In the numerical examples
in Section \ref{Section_numerical_examples}, we also projected them onto the reference manifold $\M$
by rescaling them.) In the coarsening step, the corresponding values of $\hat{y}_h^m$
are just deleted.

\end{alg}
While Algorithm \ref{algo_DeTurck} should lead to meshes with small $\sigma_{max}$,
that is to meshes without any sharp simplices (triangles), 
Algorithm \ref{algo_refinement_and_coarsening_strategy}
ensures that the simplices of $\Gamma_h^m$ have similar volume (area).
The impact of the refinement and coarsening procedure on the mesh quality
is thereby almost negligible.
Of course, the refinement and coarsening strategy of Algorithm \ref{algo_refinement_and_coarsening_strategy} can be replaced by other strategies
without affecting the DeTurck reparametrization in Algorithm \ref{algo_DeTurck}. 
For example, a refinement and coarsening strategy might take the curvature of the boundary 
of $\Gamma_h^m$ into account, or the solution of a PDE solved on $\Gamma_h^m$.

\section{Numerical results}
\label{section_numerical_results}

\subsection{Implementation}
\label{Section_implementation}
\subsubsection*{The reference manifold}
We consider $n=3$ and $d=1$ in these numerical examples with two reference manifolds.

\noindent {\bf Case 1} For the case of a simply-connected domain, the reference manifold $(\M,m)$ is chosen to be the two-dimensional half-sphere 
$\mathbb{H}^2 \subset \mathbb{R}^3$ defined in (\ref{half_sphere_definition}) 
with metric $m$ induced by the Euclidean metric.
A unit co-normal vector field $\lambda$ to $\partial \mathbb{H}^2$ with respect to $m$
is given by the constant vector field $\lambda = (1, 0, 0 )^T$. 
We therefore choose $\lambda_h := \lambda$. The finite element space
$\mathcal{S}_h(\Gamma_h^m)$ is then given by
\begin{equation}
	\mathcal{S}_h(\Gamma_h^m) = 
	\left\{ \phi_h \in V_h(\Gamma^m_h)^3 \; | \; \phi^1_h = 0
	\; \textnormal{on $\partial \Gamma^m_h$} \right\}.
	\label{FEM_space_half_sphere_case}
\end{equation}
An approximation $\M_h$ of $\mathbb{H}^2$ was produced in our experiments by the global refinement of a half-octahedron, where in each
refinement step the new vertices were projected onto the half-sphere by rescaling their position 
vector to unit length.

\noindent {\bf Case 2} In order to handle a domain with a hole, it is convenient to have two boundaries for $\M$. We choose
the reference manifold $\M$ to be the cylinder
\begin{align}
	\mathcal{C} = \{ x \in \mathbb{R}^3 \; | \; -1 \leq x_1 \leq 1 \; \textnormal{and} \; x_2^2 + x_3^2 = 1 \},
	\label{definition_cylinder}
\end{align}
with metric $m$ induced by the Euclidean metric; see Figure \ref{moving_inclusion_reference_manifold_at_time_1_0}. 
The cylinder $\mathcal{C}$ has totally geodesic boundary. A unit co-normal vector field $\lambda$ to $\partial \mathcal{C}$ with respect to $m$
is given by $\lambda = (\pm 1, 0, 0)^T$. We hence choose $\lambda_h := \lambda$. We can then use
the finite element space $\mathcal{S}_h(\Gamma_h^m)$ defined as in (\ref{FEM_space_half_sphere_case}).

\subsubsection*{Linear algebra}
In order to compute the solutions $\zeta_h^m \in \mathcal{S}_h(\Gamma_h^m)$ and $u_{h}^{m+1} \in V_h(\Gamma_h^m)^3$
of steps $(i)$ and $(ii)$ of Algorithm \ref{algo_DeTurck},
the following linear systems of equations have to be solved.
For the vector $\mathbf{Z} = (\mathbf{Z}^{j\sigma})$ the system
\begin{equation}
	\widetilde{\mathbf{M}}_{ij\kappa \sigma} \mathbf{Z}^{j \sigma} = \mathbf{R}_{i\kappa},
	\; \forall i, \kappa,
	\label{equation_for_Z}
\end{equation}
where $\mathbf{R}_{i1} = 0$ for all $i$ with vertex $p_i \in \partial \Gamma_h^m$ and
$\mathbf{R}_{i\kappa} = - \mathbf{S}_{ij\kappa \sigma} \mathbf{Y}^{j \sigma}$ else; 
and for the vector $\mathbf{U}= (\mathbf{U}^{j\sigma})$ the system
\begin{equation}
	 \mathbf{M}_{ij\kappa \sigma} \mathbf{U}^{j \sigma} 
    = \mathbf{M}_{ij\kappa \sigma} ( \mathbf{U}^{j \sigma}_{old} + \tau \mathbf{V}^{j \sigma} ) - \tfrac{\tau}{\alpha} \mathbf{D}_{ij\kappa \sigma} \widetilde{\mathbf{Z}}^{j \sigma}, 
    \; \forall i, \kappa.
    \label{equation_for_U}
\end{equation} 
Here, we have made use of the representations
$\hat{y}^m_h = \sum_{j,\sigma} \mathbf{Y}^{j\sigma} \phi_j \vec{b}_\sigma$,
$\zeta^m_h = \sum_{j,\sigma} \mathbf{Z}^{j\sigma} \phi_j \vec{b}_\sigma$,
${\tilde{\zeta}}^m_h = \sum_{j,\sigma} \widetilde{\mathbf{Z}}^{j\sigma} \phi_j \vec{b}_\sigma$,
$u^{m+1}_{h} = \sum_{j,\sigma} \mathbf{U}^{j\sigma} \phi_j \vec{b}_\sigma$,
$\tilde{u}^{m}_{h} = \sum_{j,\sigma} \mathbf{U}^{j\sigma}_{old} \phi_j \vec{b}_\sigma$,
and $I_h v^{m} = \sum_{j,\sigma} \mathbf{V}^{j\sigma} \phi_j \vec{b}_\sigma$
where $\vec{b}_\sigma = (\delta_{1\sigma}, \delta_{2\sigma}, \delta_{3\sigma})^T \in \mathbb{R}^3$,
and $\phi_j$ denotes the piecewise linear Lagrange basis function
associated with the mesh vertex $p_j \in \Gamma^m_h$.
The matrices $\mathbf{M}$, $\mathbf{S}$ and $\mathbf{D}$
are computed by assembling the following element matrices
\begin{align*}
	& \mathbf{M}_{ij\kappa\sigma}(S^m_\Gamma) = 
\left\{
\begin{aligned}	
	& \delta_{\kappa \sigma} \delta_{ij} \int_{\overline{S^m_\Gamma} \cap \partial \Gamma_h^m} \phi_i \; do,
	\; &\textnormal{if $p_i \in \partial \Gamma_h^m$},
	\\
	& \delta_{\kappa \sigma} \delta_{ij} \int_{S^m_\Gamma} \phi_i \; do,
	&\textnormal{else,}
\end{aligned}	
\right.		
	\\
	& \mathbf{D}_{ij\kappa\sigma}(S^m_\Gamma) = 
\left\{
\begin{aligned}	
	&(T_h^m(p_i) (\hat{H}^m_h(S^m_\Gamma))^{-1} \nabla_{\Gamma_h^m} \hat{y}_{h,\sigma}^m(S^m_\Gamma))_\kappa
	\delta_{ij} \int_{\overline{S^m_\Gamma} \cap \partial \Gamma_h^m} \phi_i \; do,
	\; &\textnormal{if $p_i \in \partial \Gamma_h^m$},
	\\
	&( (\hat{H}^m_h(S^m_\Gamma))^{-1} \nabla_{\Gamma_h^m} \hat{y}_{h,\sigma}^m(S^m_\Gamma))_\kappa
	\delta_{ij} \int_{S^m_\Gamma} \phi_i do,
	\; &\textnormal{else,}
\end{aligned}	
\right.	
	\\
	& \mathbf{S}_{ij\kappa\sigma}(S^m_\Gamma) = \delta_{\kappa \sigma} \int_{S^m_\Gamma} \nabla_{\Gamma_h^m} \phi_i \cdot \nabla_{\Gamma_h^m} \phi_j \; do,
\end{align*}
for all $S^m_\Gamma \in \mathcal{T}(\Gamma_h^m)$. 
Here, by abuse of notation, $do$ denotes the two-dimensional, and respectively, one-dimensional Hausdorff measure on $S^m_\Gamma$ and respectively, 
on $\overline{S^m_\Gamma} \cap \partial \Gamma^m_h$.
Note that the matrices $\hat{H}^m_h(S^m_\Gamma)$ and $\nabla_{\Gamma_h^m} \hat{y}_h^m(S^m_\Gamma)$ are constant on each simplex $S^m_\Gamma$. 
Furthermore, we define $\widetilde{\mathbf{M}}_{ij11}(S^m_\Gamma) := \delta_{ij}$ for all $i,j$ with vertex $p_i \in \partial \Gamma_h^m$
or $p_j \in \partial \Gamma_h^m$,
and $\widetilde{\mathbf{M}}_{ij\kappa \sigma}(S^m_\Gamma) :=  \delta_{\kappa \sigma} \int_{S^m_\Gamma} \phi_i \phi_j \; do$ else.
A novel feature of our scheme is that the vector $\mathbf{Y}$ does not depend on the time step $m$.
In order to see this, we just observe that by definition of $\hat{y}_h^{m+1} := \hat{y}^m_h \circ (u_h^{m+1})^{-1}$,
the values of $\hat{y}_h^{m+1}$ in the vertices of $\Gamma_h^{m+1}$ are equal to the values of $\hat{y}_h^m$ in the vertices of $\Gamma_h^m$
(if the mesh is not refined or coarsened). 
In fact, $(\mathbf{Y}^{j\sigma})_{\sigma=1, \ldots,3} \in \mathbb{R}^3$ is given by the position vector of the mesh vertex $\hat{y}_h^{m+1}(p_j) \in \M_h$.

\subsubsection*{Finite element toolbox}
The following numerical experiments were performed within the Finite Element Toolbox ALBERTA,
see \cite{SS05}. In our numerical examples
all two-dimensional submanifolds $\Gamma_h^m$, including those that are flat, were treated as
hypersurfaces in $\mathbb{R}^3$. This is just due to the design of ALBERTA. 
Since $\mathbf{M}$ is a diagonal matrix, it is trivial to solve (\ref{equation_for_U}).
The system (\ref{equation_for_Z}) can be solved by the conjugate gradient method.
We produced our images in ParaView.

\subsection{Numerical examples}
\label{Section_numerical_examples}
In the examples below, we have compared two different approaches for evolving the mesh of
a moving domain or surface with respect to their impact on the mesh quality. We will show that the
method based on Algorithms \ref{algo_DeTurck} and \ref{algo_refinement_and_coarsening_strategy}
is superior to the approach when the mesh vertices are just moved with the original (surface) velocity.
However, note that the refinement and coarsening strategy, that is Algorithm 
\ref{algo_refinement_and_coarsening_strategy}, is used in both approaches.

\subsection*{Example 1: Improving the mesh quality for stationary domains}
In the first example, we demonstrate how Algorithm \ref{algo_DeTurck} can be used to improve the
computational mesh of a given domain. We first apply a stereographic projection to the approximation $\M_h$ of the 
half-sphere. This leads to a conformal triangulation of the unit disk.
The inverse of the stereographic projection defines the vector field $\hat{y}^0_h$.
In order to produce a bad mesh, that is a mesh with a relatively large value of $\sigma_{max}$,
we deform the unit disk into the shape shown in Figure \ref{Ex1_shape_at_time_0_0}
in the time interval $[-0.02,0.0)$. At time $t=0.0$ this deformation is stopped.
In the time interval $[0.0, 0.2]$, we then apply Algorithm \ref{algo_DeTurck} together
with the refinement and coarsening strategy in Algorithm \ref{algo_refinement_and_coarsening_strategy} 
in order to improve the mesh again.
The computational parameters in this experiment are $\tau = 0.005 ~ h_{min}^2$, $\alpha = 1.0$ and $T_{adapt} = 10^{-3}$,
where $h_{min} = \min_{S^m_\Gamma \in \mathcal{T}(\Gamma_h^m)} h(S^m_\Gamma)$ is the minimal diameter of all simplices $S^m_\Gamma$ of $\Gamma_h^m$. Figures \ref{Ex1_shape_with_mesh_at_time_0_0}
and \ref{Ex1_shape_with_mesh_at_time_0_2} demonstrate the improvement of the mesh quality.
Figure \ref{Ex1_mesh_quality} shows  
the quantitative behaviour of the parameter $\sigma_{max}$. 
In Figure \ref{Ex1_shape_with_mesh_at_time_0_2}, we also see that the remeshing method preserves the shape of the original domain (red area).
The result of Algorithm \ref{algo_refinement_and_coarsening_strategy}
is that the area of all triangles is of the same order. The refinement and coarsening
of the mesh can decrease the mesh quality; see the jumps in the image on the right hand side
of Figure \ref{Ex1_mesh_quality}. However, since the refinement and coarsening process is based on 
bisection, and respectively, on its inverse procedure, the mesh quality is only changed locally by Algorithm \ref{algo_refinement_and_coarsening_strategy}. 
Such changes can be improved very quickly by Algorithm \ref{algo_DeTurck}.
Algorithm \ref{algo_refinement_and_coarsening_strategy} also leads to a local refinement and coarsening
of the reference manifold $\M_h$. This is shown in Figures \ref{Ex1_reference_manifold_at_time_0_0}
and \ref{Ex1_reference_manifold_at_time_0_2}.  

\begin{figure}
\begin{center}
\subfloat[][\centering Shape of the computational domain at time $t=0.0$.]
{\includegraphics[angle=90, width=0.6\textwidth]{./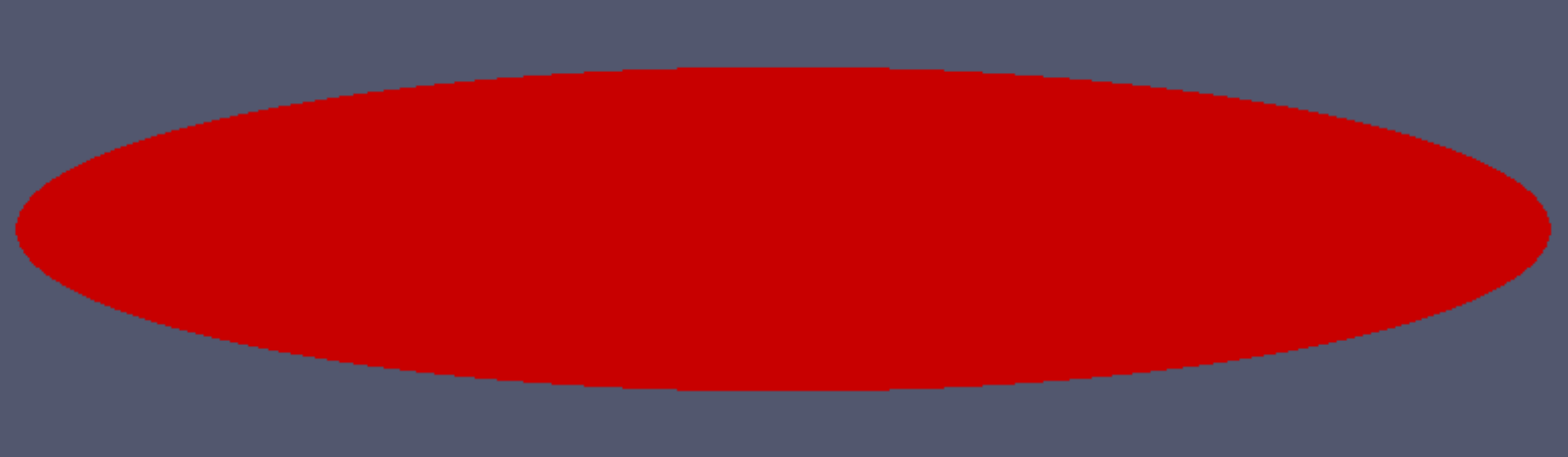}
\label{Ex1_shape_at_time_0_0}
}
\\
\subfloat[][\centering Computational mesh at time $t=0.0$.]
{\includegraphics[angle=90, width=0.4\textwidth]{./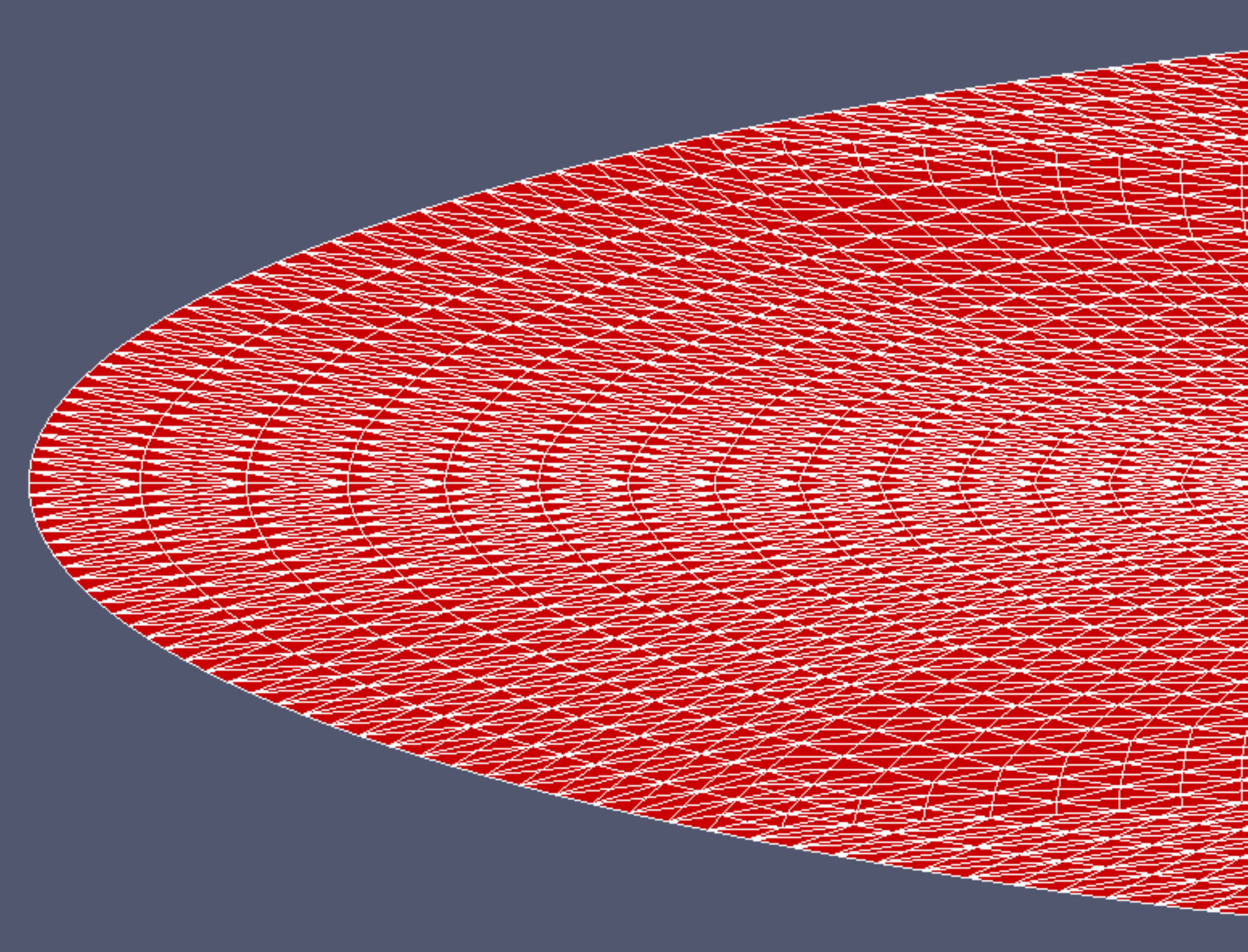}
\label{Ex1_shape_with_mesh_at_time_0_0}
} 
~~~
\subfloat[][\centering Computational mesh at time $t=0.2$.]
{\includegraphics[angle=90, width=0.4\textwidth]{./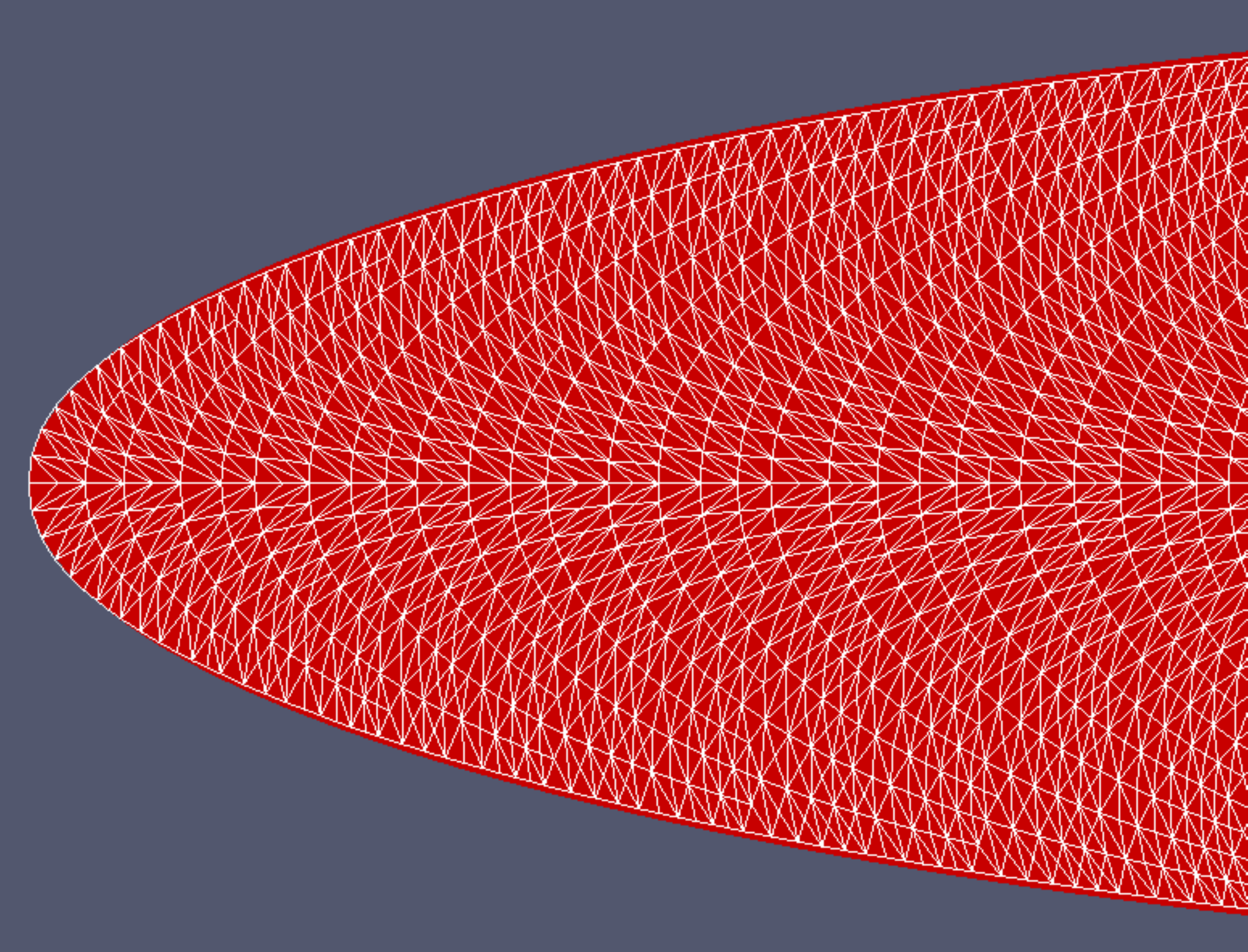}
\label{Ex1_shape_with_mesh_at_time_0_2}
}
\\
\subfloat[][\centering Reference manifold at the beginning of the simulation.]
{\includegraphics[width=0.4\textwidth]{./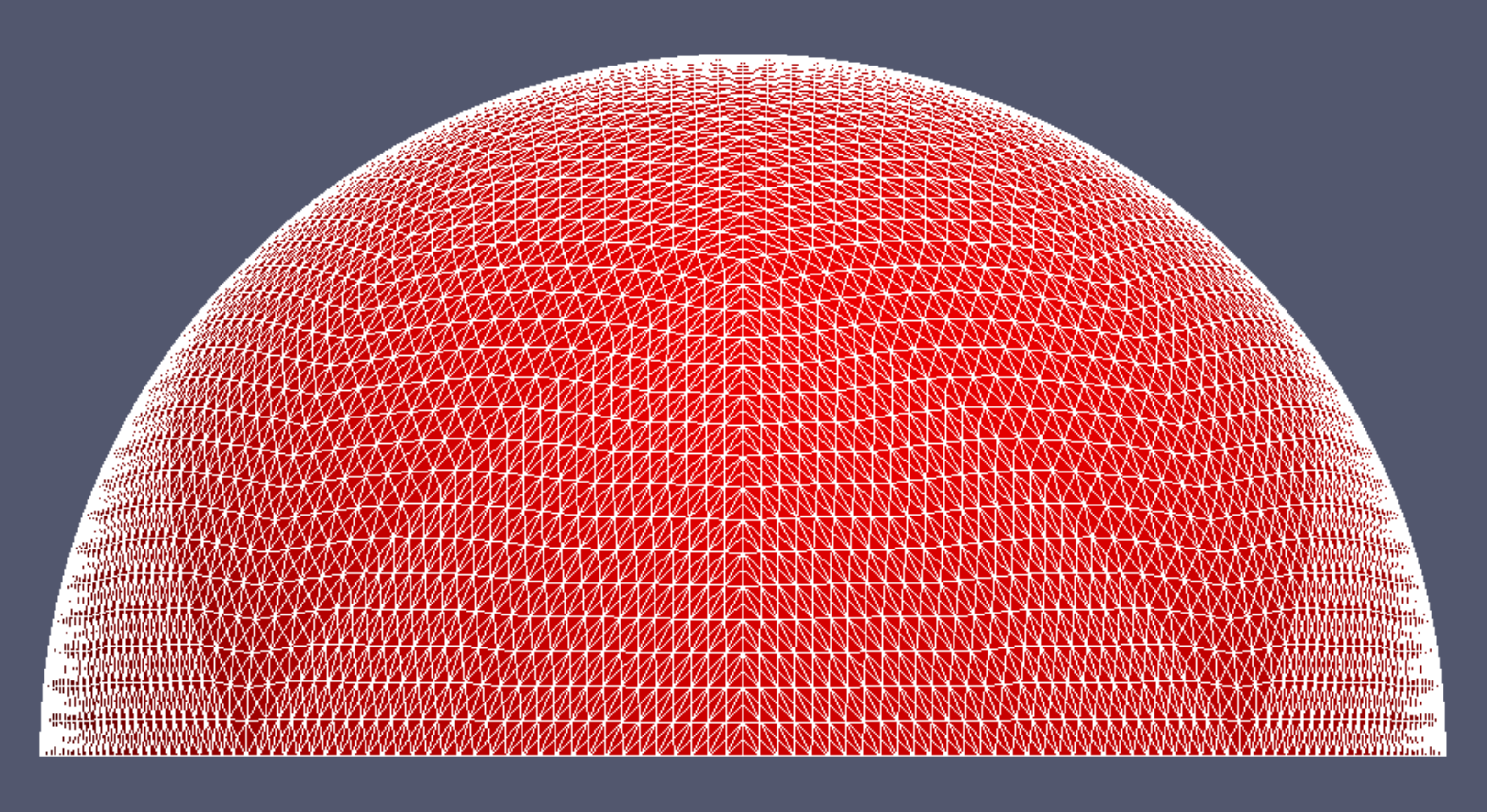}
\label{Ex1_reference_manifold_at_time_0_0}
} 
~~~
\subfloat[][\centering Reference manifold at the end of the simulation.]
{\includegraphics[width=0.4\textwidth]{./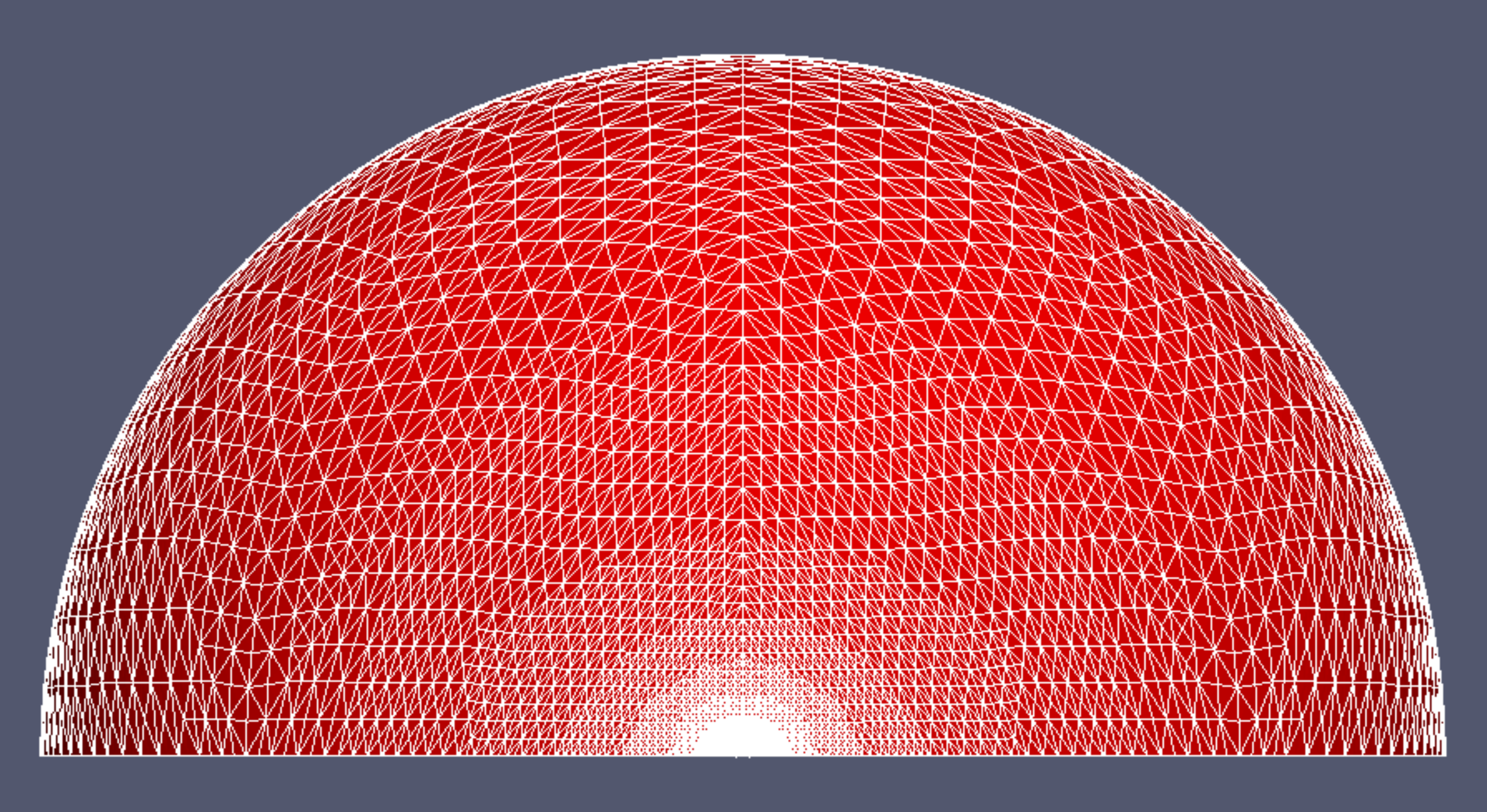}
\label{Ex1_reference_manifold_at_time_0_2}
}
\end{center}
\caption{Improvement of the computational mesh for the stationary domain presented in Figure \ref{Ex1_shape_at_time_0_0}. 
The initial mesh is shown in Figure \ref{Ex1_shape_with_mesh_at_time_0_0} (for $n=6$ initial global mesh refinements). 
The result of Algorithms \ref{algo_DeTurck}
and \ref{algo_refinement_and_coarsening_strategy} is presented
in Figure \ref{Ex1_shape_with_mesh_at_time_0_2}. 
In both pictures the red colour indicates the original shape of the domain.
Figure \ref{Ex1_shape_with_mesh_at_time_0_2} indicates that the scheme
provides a mesh of high quality with a good approximation of the original shape. 
The mesh of the reference manifold is presented in \ref{Ex1_reference_manifold_at_time_0_0} 
and \ref{Ex1_reference_manifold_at_time_0_2}.
See Example $1$ for more details.
} 
\label{Ex1_fig_domain_plus_mesh}
\end{figure}

\gdef\gplbacktext{}%
\gdef\gplfronttext{}%
\begin{figure}
\begin{center}
\begin{picture}(7936.00,3400.00)%
    \gplgaddtomacro\gplbacktext{%
      \csname LTb\endcsname%
      \put(660,110){\makebox(0,0)[r]{\strut{} 10}}%
      \csname LTb\endcsname%
      \put(660,577){\makebox(0,0)[r]{\strut{} 15}}%
      \csname LTb\endcsname%
      \put(660,1044){\makebox(0,0)[r]{\strut{} 20}}%
      \csname LTb\endcsname%
      \put(660,1511){\makebox(0,0)[r]{\strut{} 25}}%
      \csname LTb\endcsname%
      \put(660,1977){\makebox(0,0)[r]{\strut{} 30}}%
      \csname LTb\endcsname%
      \put(660,2444){\makebox(0,0)[r]{\strut{} 35}}%
      \csname LTb\endcsname%
      \put(660,2911){\makebox(0,0)[r]{\strut{} 40}}%
      \csname LTb\endcsname%
      \put(660,3378){\makebox(0,0)[r]{\strut{} 45}}%
      \csname LTb\endcsname%
      \put(1080,-110){\makebox(0,0){\strut{}0.0}}%
      \csname LTb\endcsname%
      \put(1798,-110){\makebox(0,0){\strut{}0.05}}%
      \csname LTb\endcsname%
      \put(2517,-110){\makebox(0,0){\strut{}0.10}}%
      \csname LTb\endcsname%
      \put(3236,-110){\makebox(0,0){\strut{}0.15}}%
      \csname LTb\endcsname%
      \put(3955,-110){\makebox(0,0){\strut{}0.20}}%
      \put(22,1744){\rotatebox{90}{\makebox(0,0){\strut{}$\sigma_{max}$}}}%
      \put(2373,-440){\makebox(0,0){\strut{}Time}}%
    }%
    \gplgaddtomacro\gplfronttext{%
      \csname LTb\endcsname%
      \put(2968,3205){\makebox(0,0)[r]{\strut{}Mesh quality, n=5}}%
      \csname LTb\endcsname%
      \put(2968,2985){\makebox(0,0)[r]{\strut{}Mesh quality, n=6}}%
    }%
    \gplgaddtomacro\gplbacktext{%
      \csname LTb\endcsname%
      \put(4628,110){\makebox(0,0)[r]{\strut{} 12}}%
      \csname LTb\endcsname%
      \put(4628,655){\makebox(0,0)[r]{\strut{} 12.5}}%
      \csname LTb\endcsname%
      \put(4628,1199){\makebox(0,0)[r]{\strut{} 13}}%
      \csname LTb\endcsname%
      \put(4628,1744){\makebox(0,0)[r]{\strut{} 13.5}}%
      \csname LTb\endcsname%
      \put(4628,2289){\makebox(0,0)[r]{\strut{} 14}}%
      \csname LTb\endcsname%
      \put(4628,2833){\makebox(0,0)[r]{\strut{} 14.5}}%
      \csname LTb\endcsname%
      \put(4628,3378){\makebox(0,0)[r]{\strut{} 15}}%
      \csname LTb\endcsname%
      \put(4760,-110){\makebox(0,0){\strut{}0.15}}%
      \csname LTb\endcsname%
      \put(5393,-110){\makebox(0,0){\strut{}0.16}}%
      \csname LTb\endcsname%
      \put(6025,-110){\makebox(0,0){\strut{}0.17}}%
      \csname LTb\endcsname%
      \put(6658,-110){\makebox(0,0){\strut{}0.18}}%
      \csname LTb\endcsname%
      \put(7290,-110){\makebox(0,0){\strut{}0.19}}%
      \csname LTb\endcsname%
      \put(7923,-110){\makebox(0,0){\strut{}0.20}}%
      \put(6341,-440){\makebox(0,0){\strut{}Time}}%
    }%
    \gplgaddtomacro\gplfronttext{%
      \csname LTb\endcsname%
      \put(6936,3205){\makebox(0,0)[r]{\strut{}Mesh quality, n=6}}%
    }%
    \gplbacktext
    \put(0,0){\includegraphics{./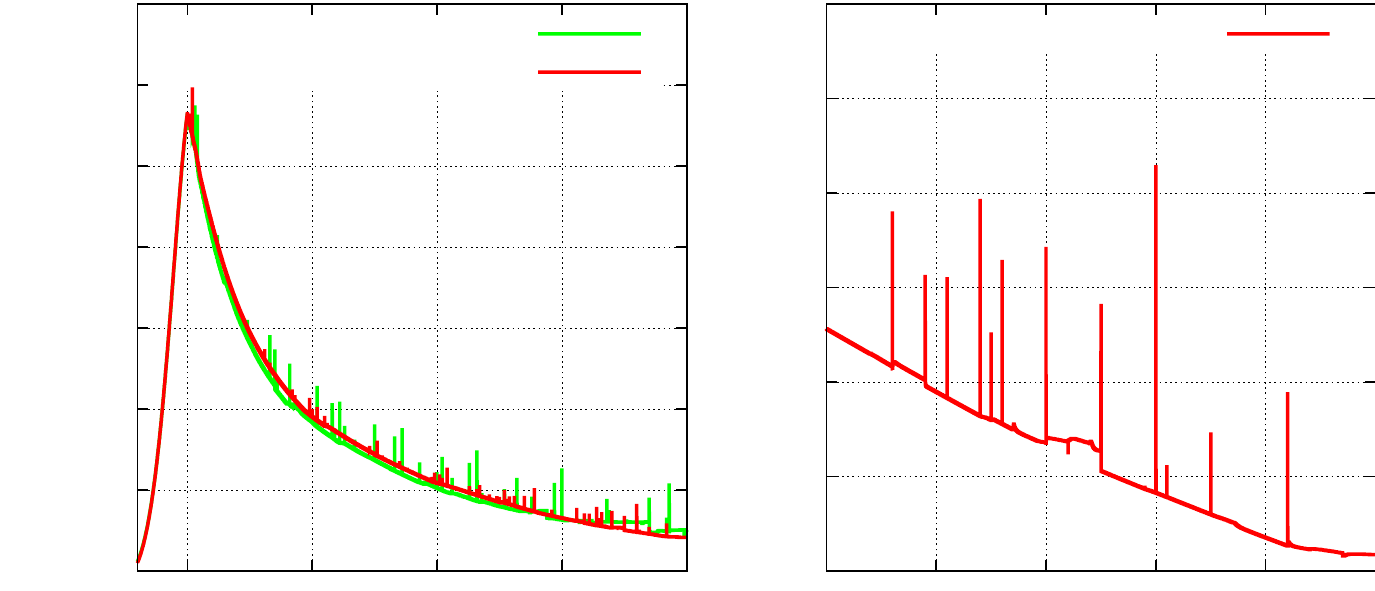}}%
    \gplfronttext
  \end{picture}%
  \vspace*{10mm}
  \caption{Mesh qualtity $\sigma_{max}$, see (\ref{definition_sigma_max}),
  for the computational mesh in Figure \ref{Ex1_fig_domain_plus_mesh}.
  In the left image the results are shown for $n=5$ and $n=6$ initial global mesh refinements.  
  For $t < 0$, the mesh is deformed in order to obtain a bad test mesh. At time $t=0$,
  this deformation is stopped. For $t > 0$, Algorithms \ref{algo_DeTurck} and 
  \ref{algo_refinement_and_coarsening_strategy} are applied to improve the mesh quality again.
  The refinement and coarsening strategy in Algorithm  
  \ref{algo_refinement_and_coarsening_strategy} can lead to sharp jumps in the mesh quality,
  see the enlarged section on the right hand side. 
  Since mesh refinement and coarsening
  is a local procedure, its effect on the mesh quality 
  is corrected very quickly by Algorithm \ref{algo_DeTurck}.
  See Example $1$ for more details.	  
  } 
  \label{Ex1_mesh_quality}
\end{center}
\end{figure}

\subsection*{Example 2: Mesh improvement for moving flat domains in $\mathbb{R}^2$}
We now use Algorithm \ref{algo_DeTurck} in order to preserve the mesh quality of 
a moving domain when the initial mesh is already of high quality. 
\subsubsection*{Example 2.1}
We first consider the unit disk 
$\Gamma(0) := B_1(0) \subset \mathbb{R}^2$ which is deformed according to (\ref{equation_of_motion_non-reparametrized})
with
\begin{align}
	v(x_1, x_2) = (0.0, -x_2 (1.0 - x_1^2)^2 + 0.2x_1)^T.
	\label{velocity_field_in_Ex2_moving_manifold}
\end{align}
The computational parameters are $\tau = 0.02 ~ h_{min}^2$, $\alpha = 1.0$ and $T_{adapt} = 0.01$. 
The results are presented in Figures \ref{Ex2_moving_domain_shape_plus_mesh} and \ref{Ex2_mesh_quality}.
 
\begin{figure}
\begin{center}
\subfloat[][\centering Intial domain with computational mesh.]
{\includegraphics[width=0.296\textwidth]{./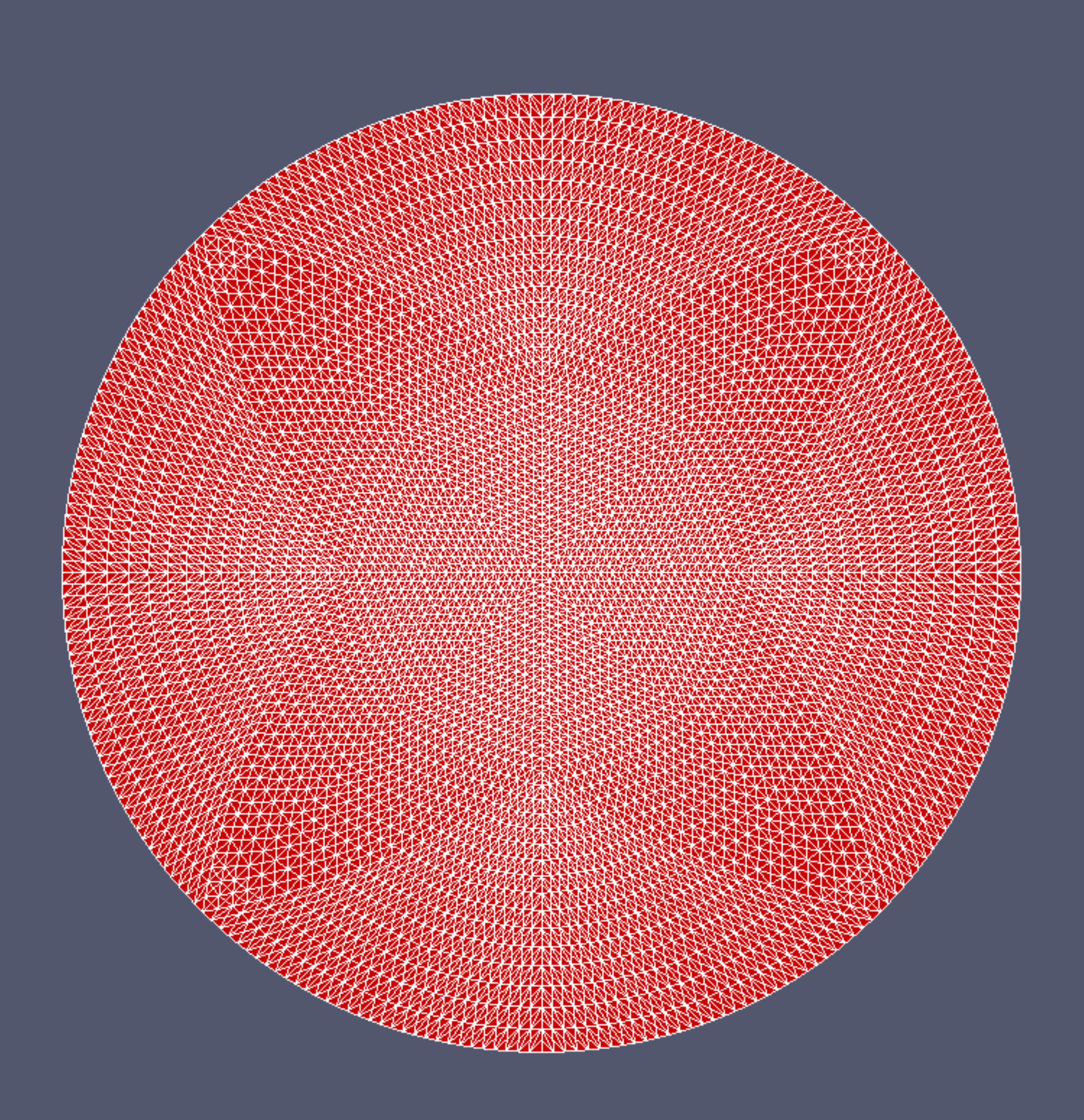}
\label{Ex2_moving_manifold_at_time_0_00}
} 
~~~
\subfloat[][\centering Computational mesh for the final domain obtained by DeTurck redistribution.]
{\includegraphics[width=0.4\textwidth]{./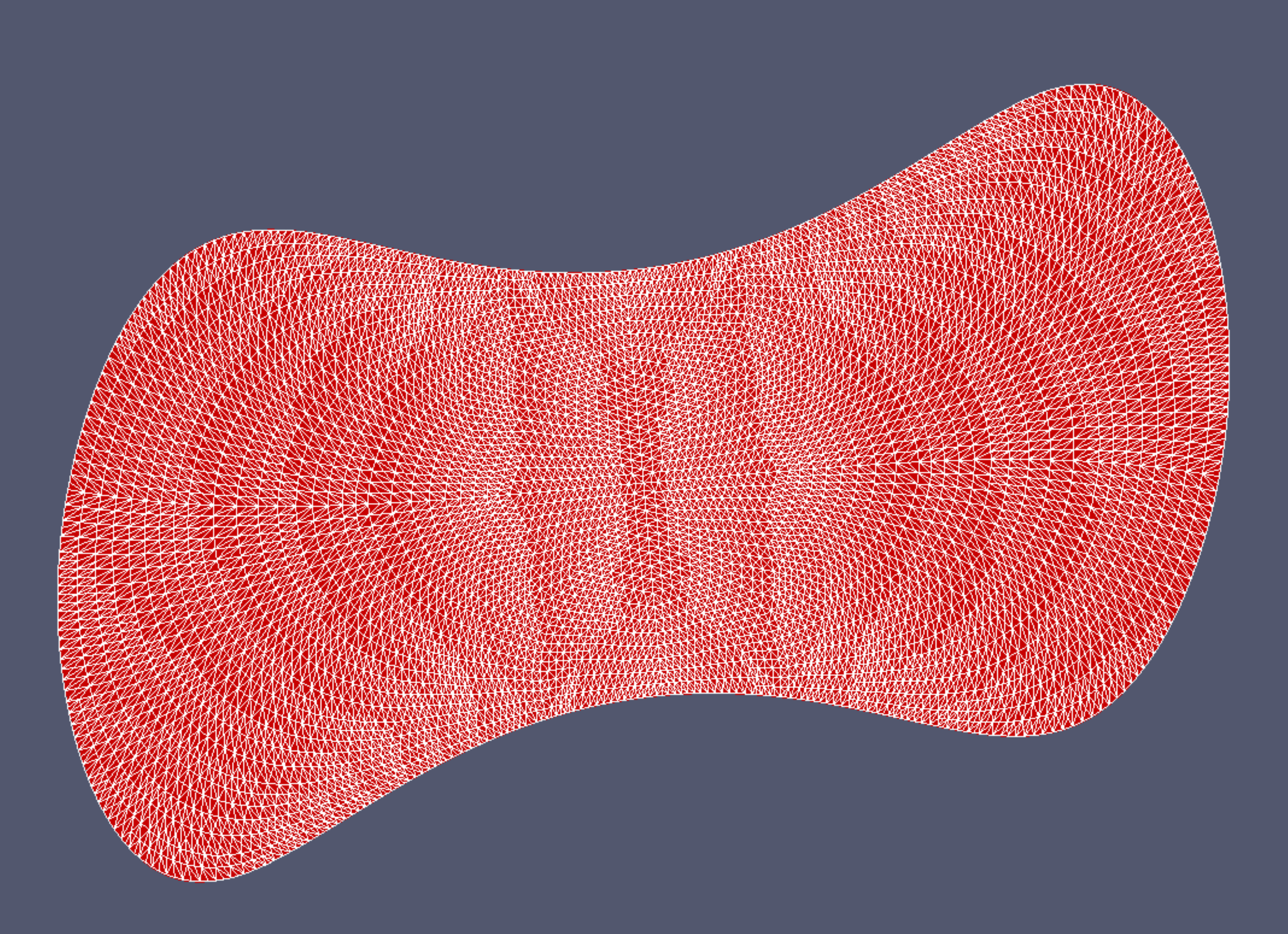}
\label{Ex2_moving_manifold_at_time_1_00}
} 
\\
\subfloat[][\centering Computational mesh for the final domain without any redistribution of mesh vertices.]
{\includegraphics[width=0.4\textwidth]{./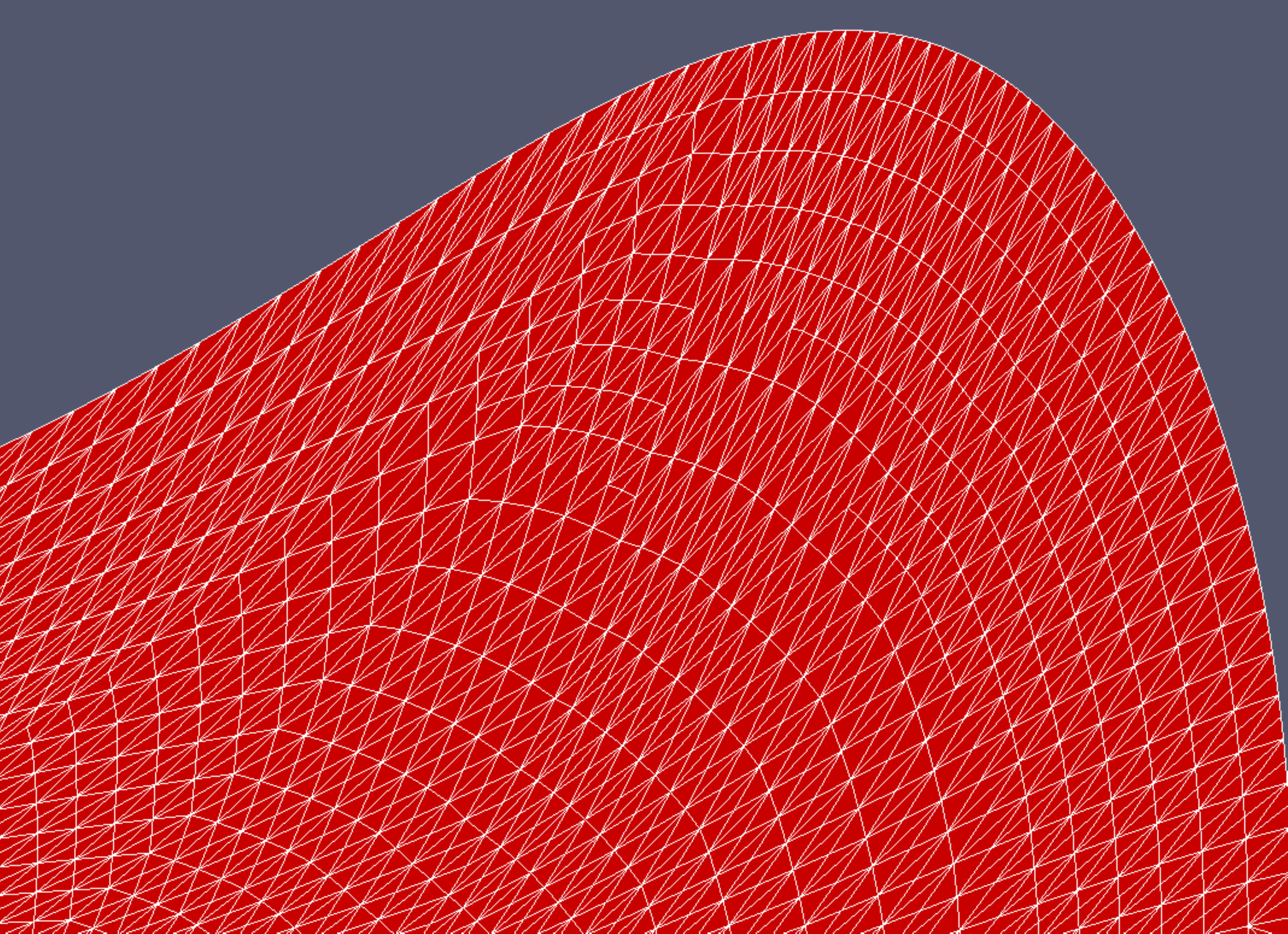}
\label{Ex2_moving_manifold_without_DeT_at_time_1_00}
} 
~~~
\subfloat[][\centering Computational mesh for the final domain obtained by DeTurck redistribution.]
{\includegraphics[width=0.4\textwidth]{./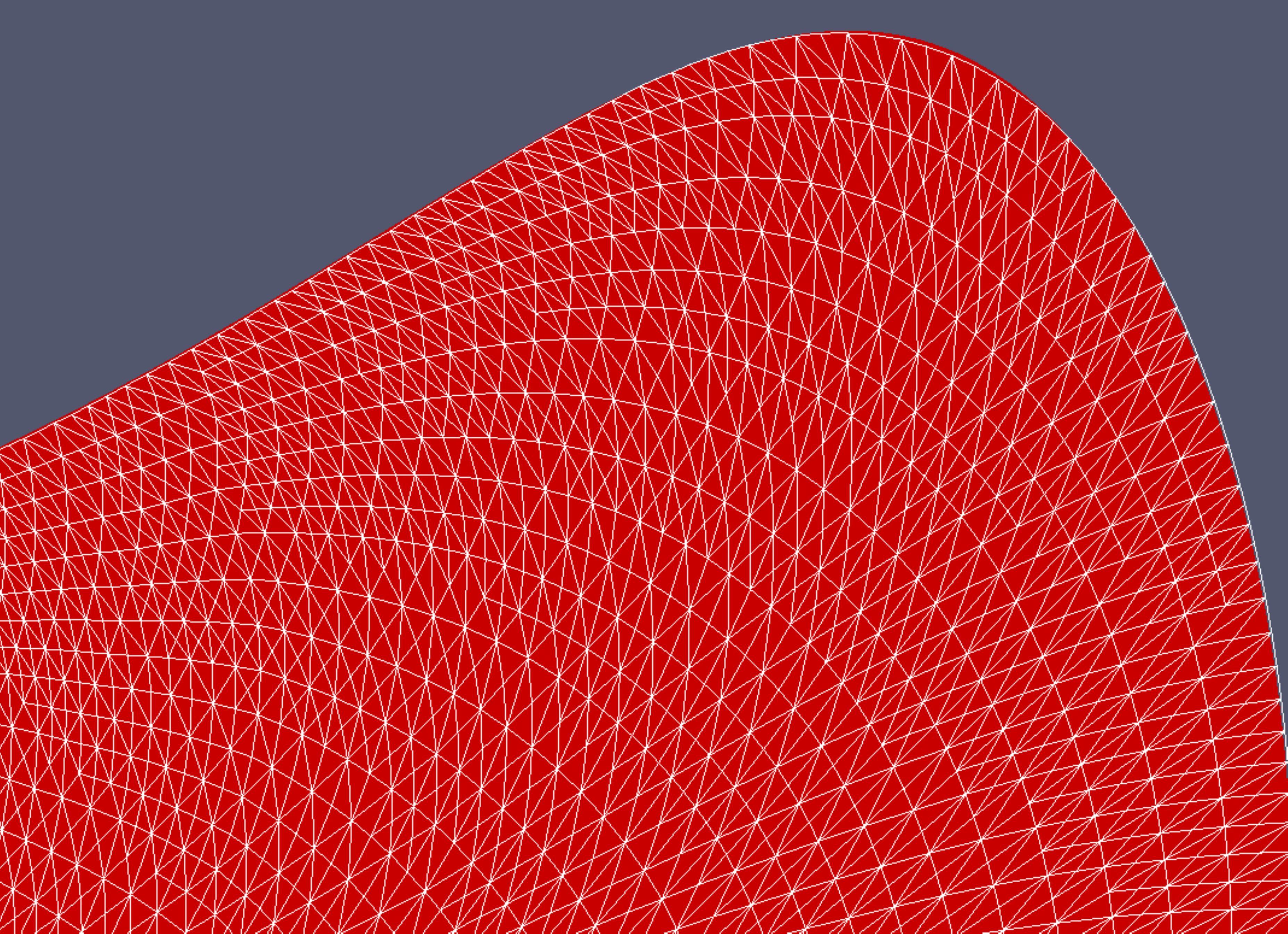}
\label{Ex2_moving_manifold_with_DeT_at_time_1_00}
} 
\end{center}
\caption{Comparison of the mesh behaviour for a moving domain in $\mathbb{R}^2$. 
The circular shape in Figure \ref{Ex2_moving_manifold_at_time_0_00} is deformed
according to the velocity field $v$ in (\ref{velocity_field_in_Ex2_moving_manifold}) into the domain shown in Figure \ref{Ex2_moving_manifold_at_time_1_00}. 
An enlarged section of the resulting mesh is presented in 
Figures \ref{Ex2_moving_manifold_without_DeT_at_time_1_00} and \ref{Ex2_moving_manifold_with_DeT_at_time_1_00}, respectively. 
If Algorithms \ref{algo_DeTurck} and \ref{algo_refinement_and_coarsening_strategy} are applied, the mesh quality is not affected by the shape 
deformation, see Figures \ref{Ex2_moving_manifold_with_DeT_at_time_1_00} and  \ref{Ex2_mesh_quality}.
See Example $2.1$ for more details.
} 
\label{Ex2_moving_domain_shape_plus_mesh}
\end{figure}

\gdef\gplbacktext{}%
\gdef\gplfronttext{}%
\begin{figure}
\begin{center}
 \begin{picture}(5668.00,3400.00)%
    \gplgaddtomacro\gplbacktext{%
      \csname LTb\endcsname%
      \put(660,110){\makebox(0,0)[r]{\strut{} 10}}%
      \csname LTb\endcsname%
      \put(660,764){\makebox(0,0)[r]{\strut{} 15}}%
      \csname LTb\endcsname%
      \put(660,1417){\makebox(0,0)[r]{\strut{} 20}}%
      \csname LTb\endcsname%
      \put(660,2071){\makebox(0,0)[r]{\strut{} 25}}%
      \csname LTb\endcsname%
      \put(660,2724){\makebox(0,0)[r]{\strut{} 30}}%
      \csname LTb\endcsname%
      \put(660,3378){\makebox(0,0)[r]{\strut{} 35}}%
      \csname LTb\endcsname%
      \put(792,-110){\makebox(0,0){\strut{} 0}}%
      \csname LTb\endcsname%
      \put(1765,-110){\makebox(0,0){\strut{} 0.2}}%
      \csname LTb\endcsname%
      \put(2737,-110){\makebox(0,0){\strut{} 0.4}}%
      \csname LTb\endcsname%
      \put(3710,-110){\makebox(0,0){\strut{} 0.6}}%
      \csname LTb\endcsname%
      \put(4682,-110){\makebox(0,0){\strut{} 0.8}}%
      \csname LTb\endcsname%
      \put(5655,-110){\makebox(0,0){\strut{} 1}}%
      \put(22,1744){\rotatebox{90}{\makebox(0,0){\strut{}$\sigma_{max}$}}}%
      \put(3223,-440){\makebox(0,0){\strut{}Time}}%
    }%
    \gplgaddtomacro\gplfronttext{%
      \csname LTb\endcsname%
      \put(2904,3205){\makebox(0,0)[r]{\strut{}with DeTurck}}%
      \csname LTb\endcsname%
      \put(2904,2985){\makebox(0,0)[r]{\strut{}without DeTurck}}%
    }%
    \gplbacktext
    \put(0,0){\includegraphics{./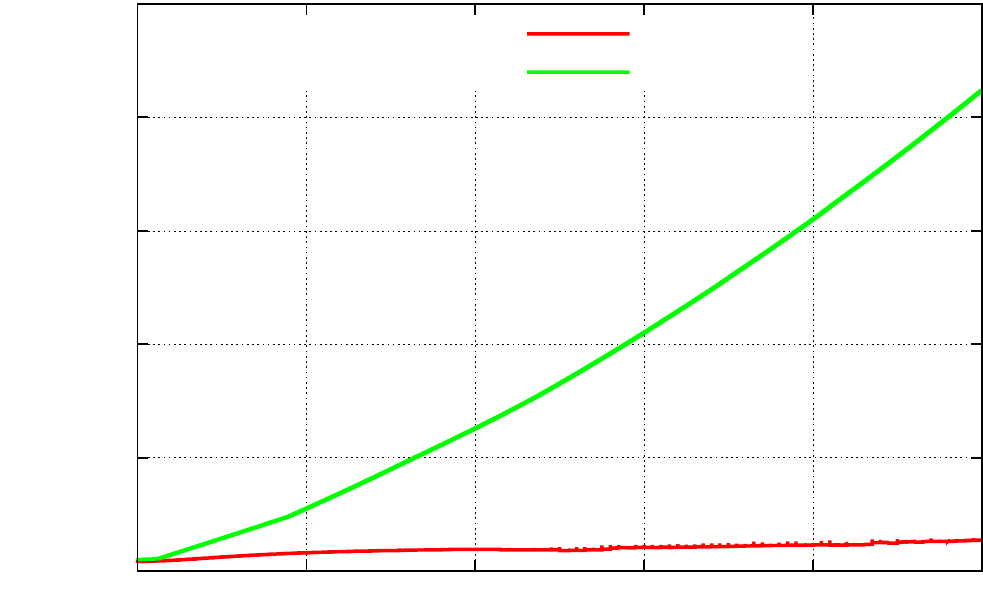}}%
    \gplfronttext
  \end{picture}%
  \vspace*{10mm}
  \caption{Mesh quality $\sigma_{max}$, see (\ref{definition_sigma_max}), for the computational mesh in Figure \ref{Ex2_moving_domain_shape_plus_mesh}. 
  The domain deformation reduces the mesh quality considerably
  if the mesh vertices are not redistributed. In contrast, the mesh quality remains high if 
  the mesh is moved according to Algorithm \ref{algo_DeTurck}.
  See Example $2.1$ for more details.	  
  }
  \label{Ex2_mesh_quality}
\end{center}
\end{figure}

\subsubsection*{Example 2.2}
In the next example, we consider the domain $\Gamma(t) = B_{r_1}(0) \setminus B_{r_2}(p(t)) \subset \mathbb{R}^2$
with the hole $B_{r_2}(p(t))$ around the center $p(t) \in \mathbb{R}^2$; see Figure \ref{moving_inclusion_at_time_0_0}. We choose $r_1 = 2.25$ and $r_2 = 0.25$.
The evolution of the domain $\Gamma(t)$ is given by the velocity field $v$ solving 
\begin{equation*}
	\Delta v = 0, \quad \textnormal{in $\Gamma(t)$},
\end{equation*}
and $v(x_1, x_2) = 4 (- \sin(2\pi t), \cos(2\pi t))^T$ on the interior boundary as well as $v(x_1,x_2) =0$ on the exterior boundary.
In the time interval $[0,1]$, this velocity field induces a circular movement of the hole $B_{r_2}(p(t))$ in the interior of the disk
$B_{r_1}(0)$. The boundary $\partial B_{r_1}(0)$ remains unchanged.
We approximate the velocity field $v$ by the solution $v_h^m \in V_h(\Gamma_h^m)^2$ of
\begin{align}
	\int_{\Gamma_h^{m}} \nabla_{\Gamma_h^m} v_h^m : \nabla_{\Gamma_h^m} \varphi_h  \; do = 0, 
	\quad \forall \varphi_h \in \overset{\circ}{V}_h(\Gamma_h^m)^2,
	\label{velocity_field_moving_inclusion}
\end{align}
and $v_h^m = I_h v$ on $\partial \Gamma_h^m$. 
Since $\Gamma(t)$ is not simply-connected, we here cannot use the half-sphere as a reference manifold.
Instead, we choose the cylinder (\ref{definition_cylinder}).
The computational parameters are $\tau = 0.001 ~ h_{min}^2$, $\alpha = 0.1$ and $T_{adapt} = 10^{-3}$.
The results of the simulations with and without redistribution of mesh vertices
are shown in Figures \ref{moving_inclusion_figures}
and \ref{moving_inclusion_mesh_quality}. When Algorithm \ref{algo_DeTurck}
is not applied, the mesh totally degenerates (including mesh entanglements) and hence could not be used to solve a PDE on
the moving domain. In contrast, the mesh obtained by our DeTurck scheme preserves its high quality
for all time.
\begin{figure}
\begin{center}
\subfloat[][\centering Initial domain with computational mesh.]
{\includegraphics[width=0.266\textwidth]{./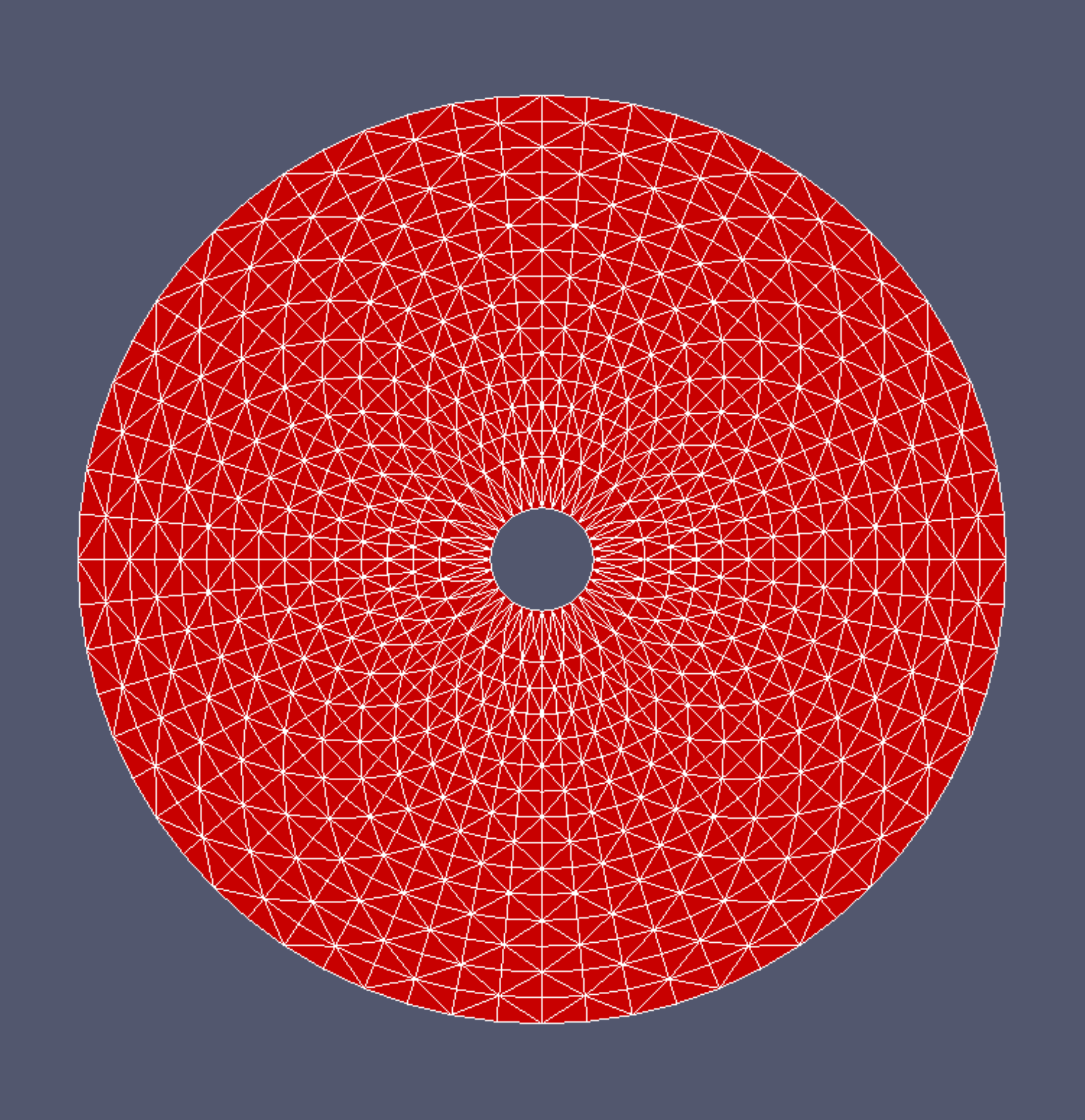}
\label{moving_inclusion_at_time_0_0}
}
~~
\subfloat[][\centering Computational mesh at $t=0.25$ with DeTurck redistribution.]
{\includegraphics[width=0.266\textwidth]{./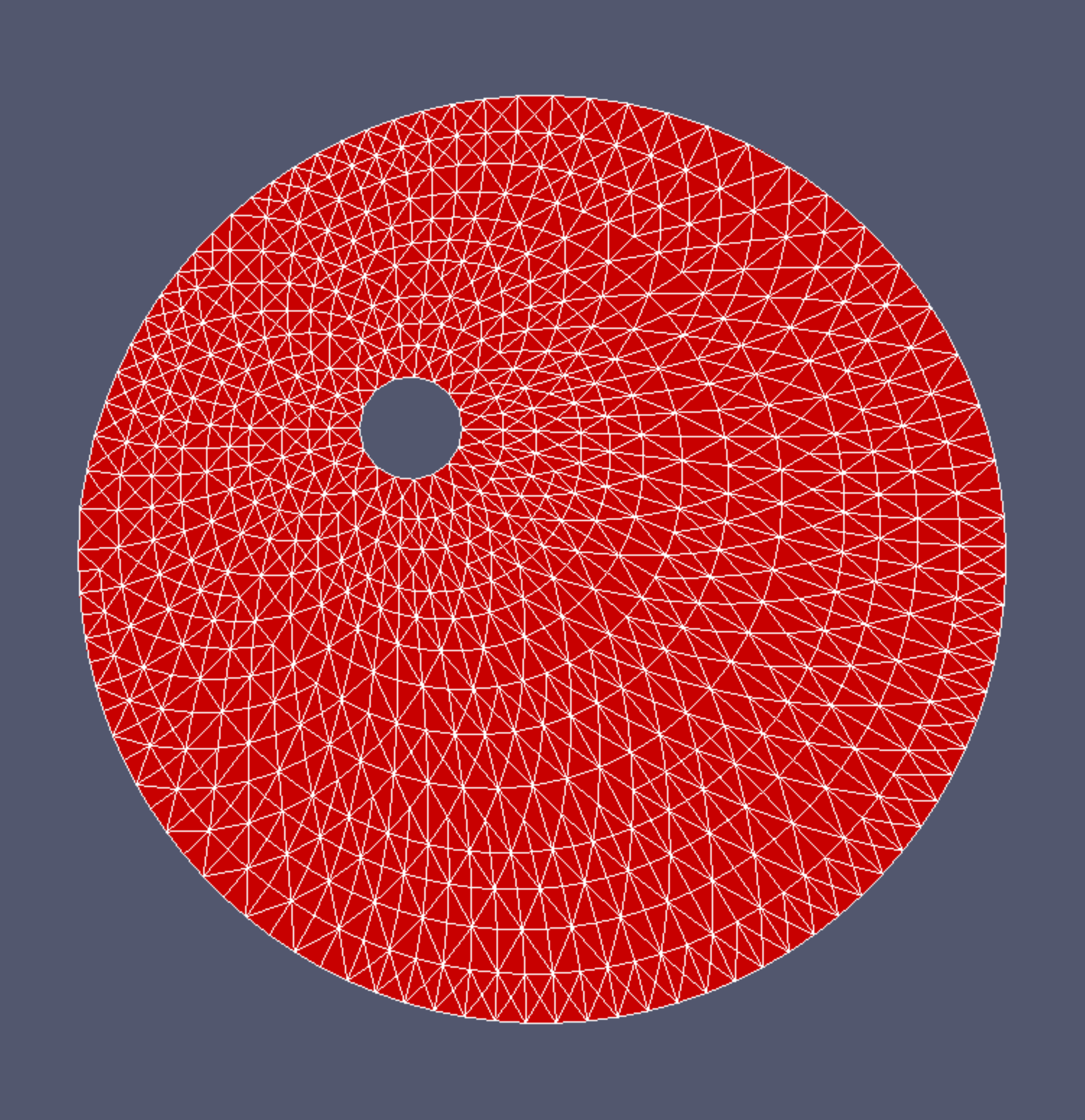}
\label{moving_inclusion_at_time_0_25}
} 
~~
\subfloat[][\centering Computational mesh at $t=0.5$ with DeTurck redistribution.]
{\includegraphics[width=0.266\textwidth]{./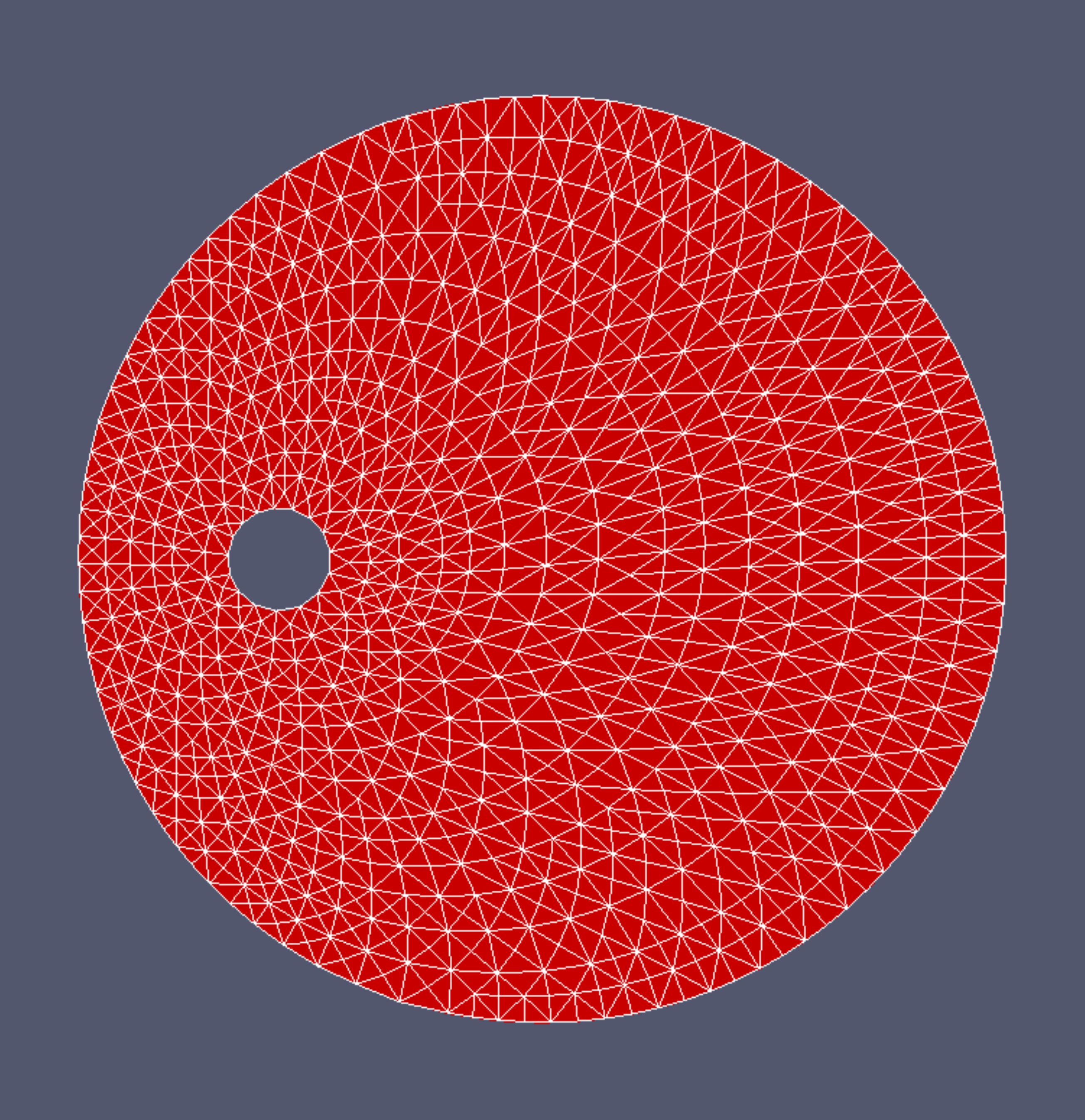}
\label{moving_inclusion_at_time_0_50}
} 
\\
\subfloat[][\centering Computational mesh at $t=0.75$ with DeTurck redistribution.]
{\includegraphics[width=0.266\textwidth]{./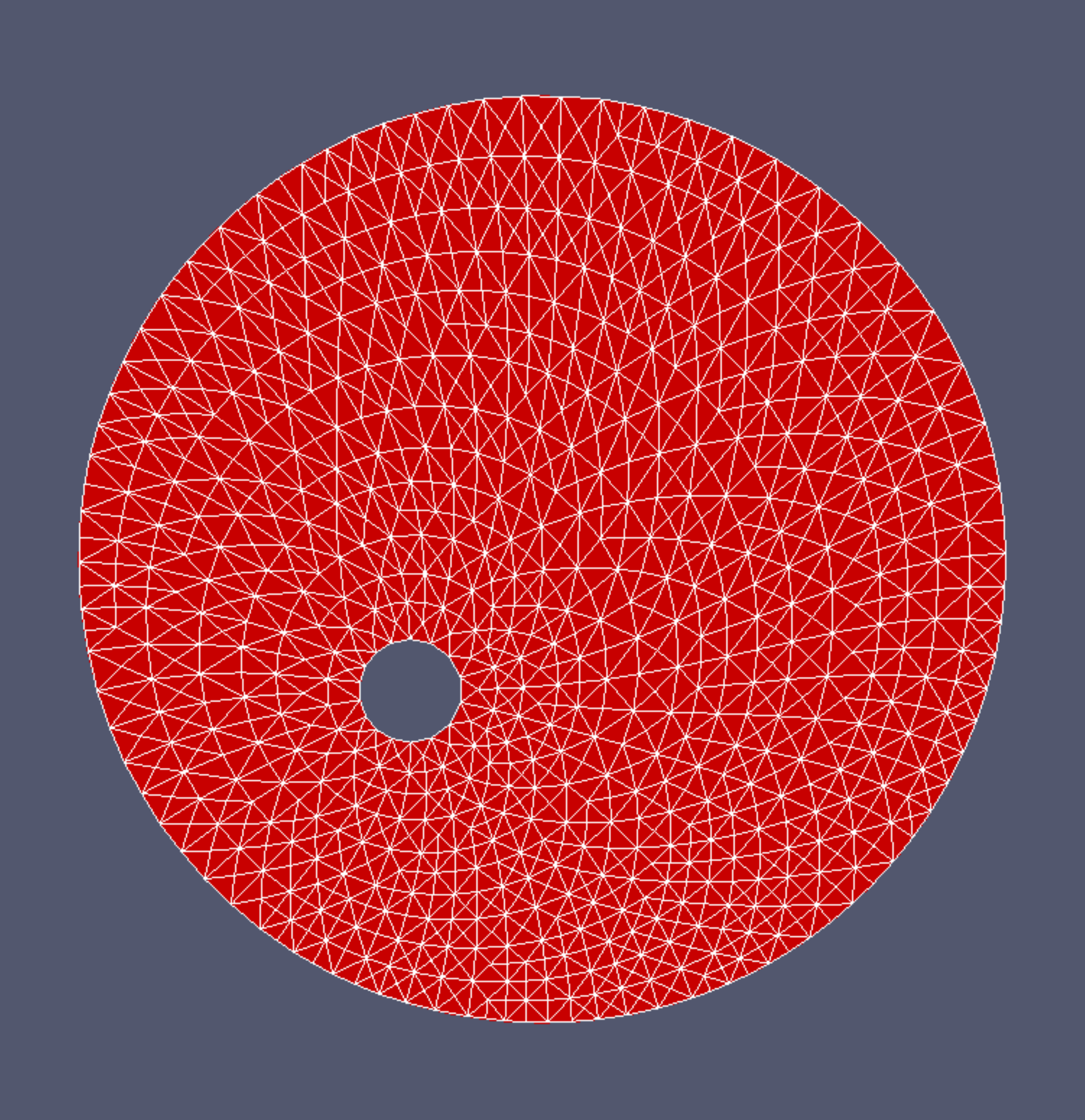}
\label{moving_inclusion_at_time_0_75}
} 
~~
\subfloat[][\centering Computational mesh at $t=1.0$ with DeTurck redistribution.]
{\includegraphics[width=0.266\textwidth]{./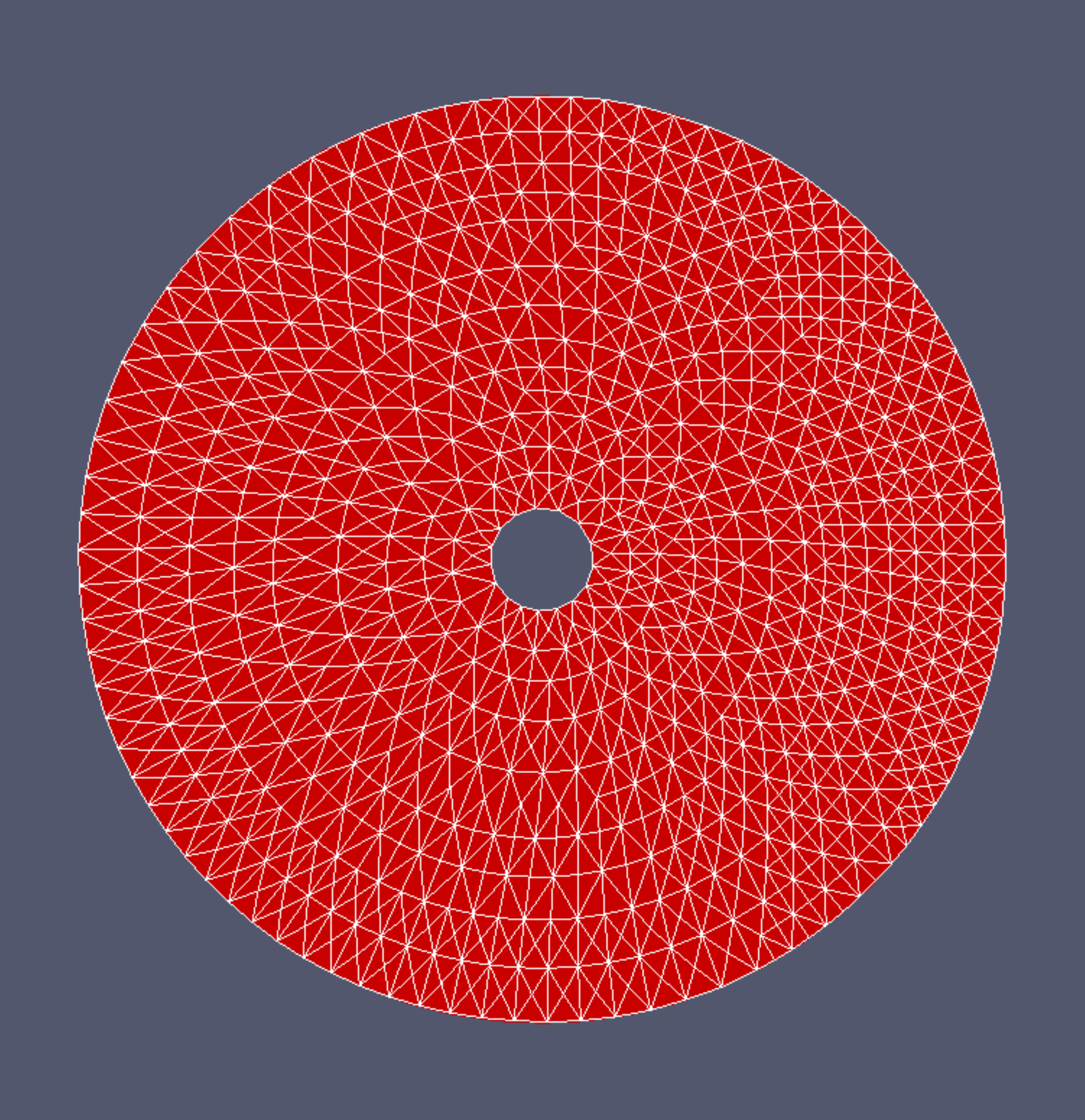}
\label{moving_inclusion_at_time_1_00}
}  
~~
\subfloat[][\centering Reference manifold at time $t=1.0$.]
{\includegraphics[width=0.266\textwidth]{./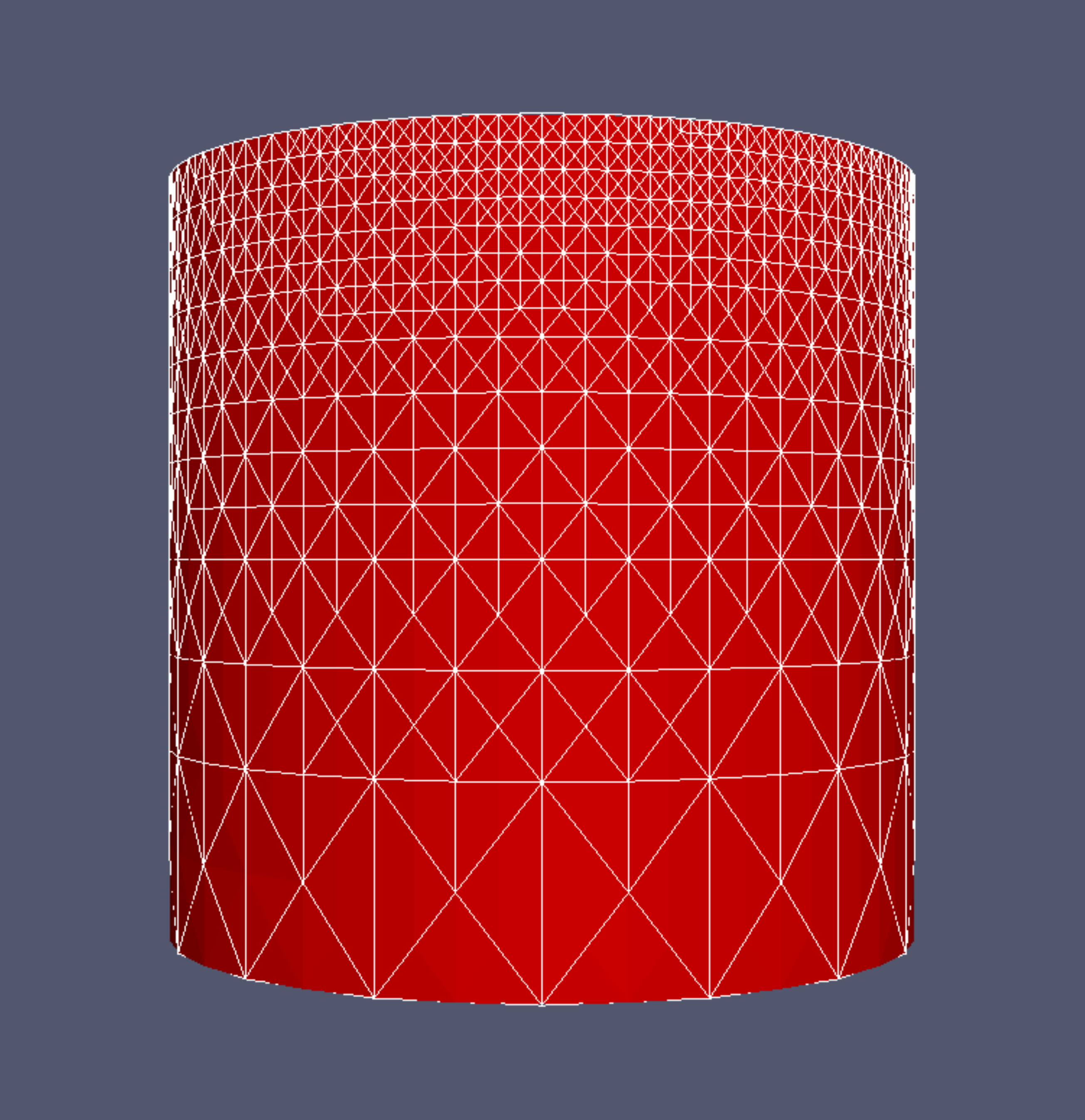}
\label{moving_inclusion_reference_manifold_at_time_1_0}
}
\\
\subfloat[][\centering Computational mesh at $t=0.25$ without redistribution.]
{\includegraphics[width=0.2\textwidth]{./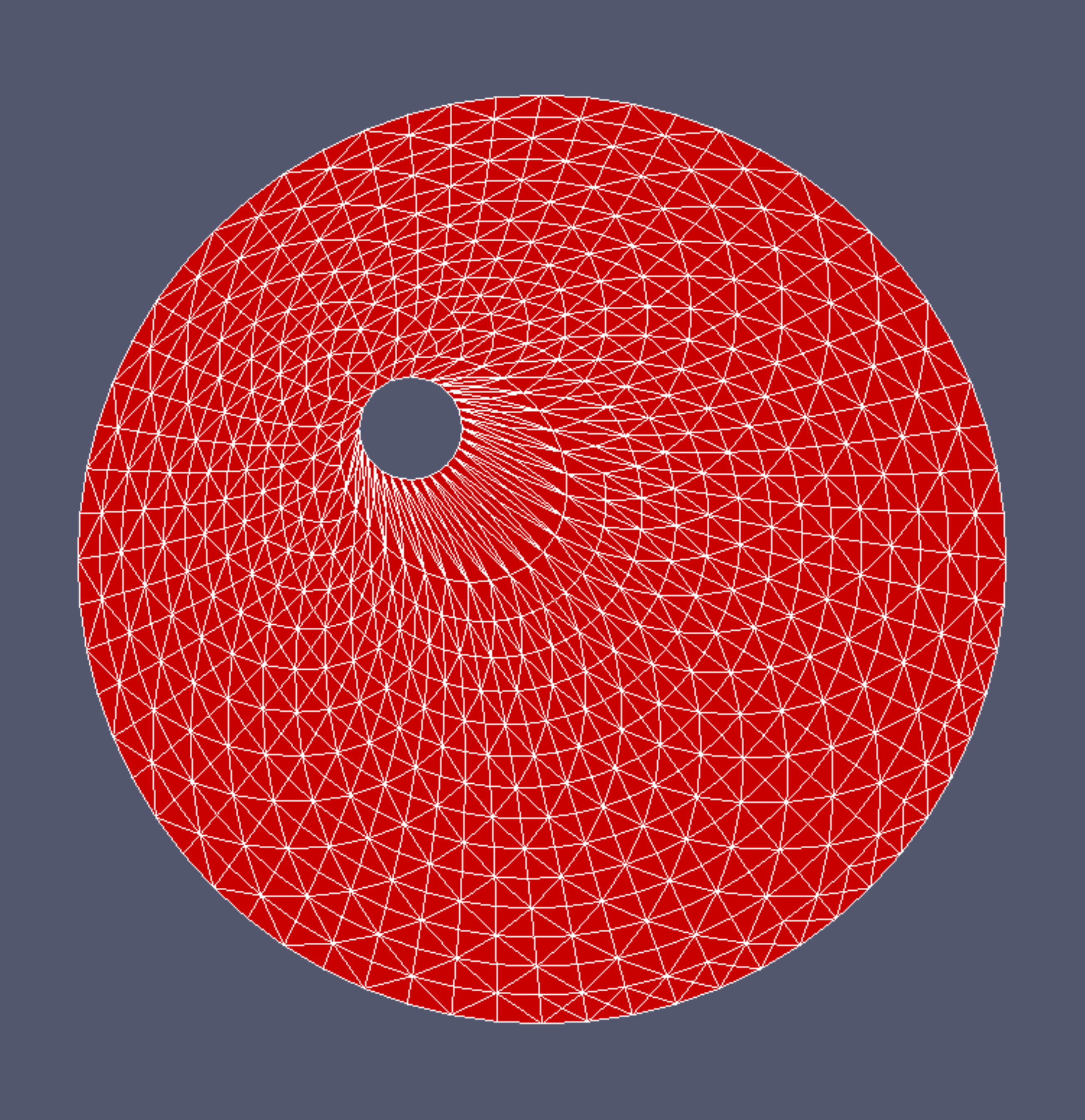}
\label{moving_inclusion_without_DeT_at_time_0_25}
} 
~~
\subfloat[][\centering Computational mesh at $t=0.5$ without redistribution.]
{\includegraphics[width=0.2\textwidth]{./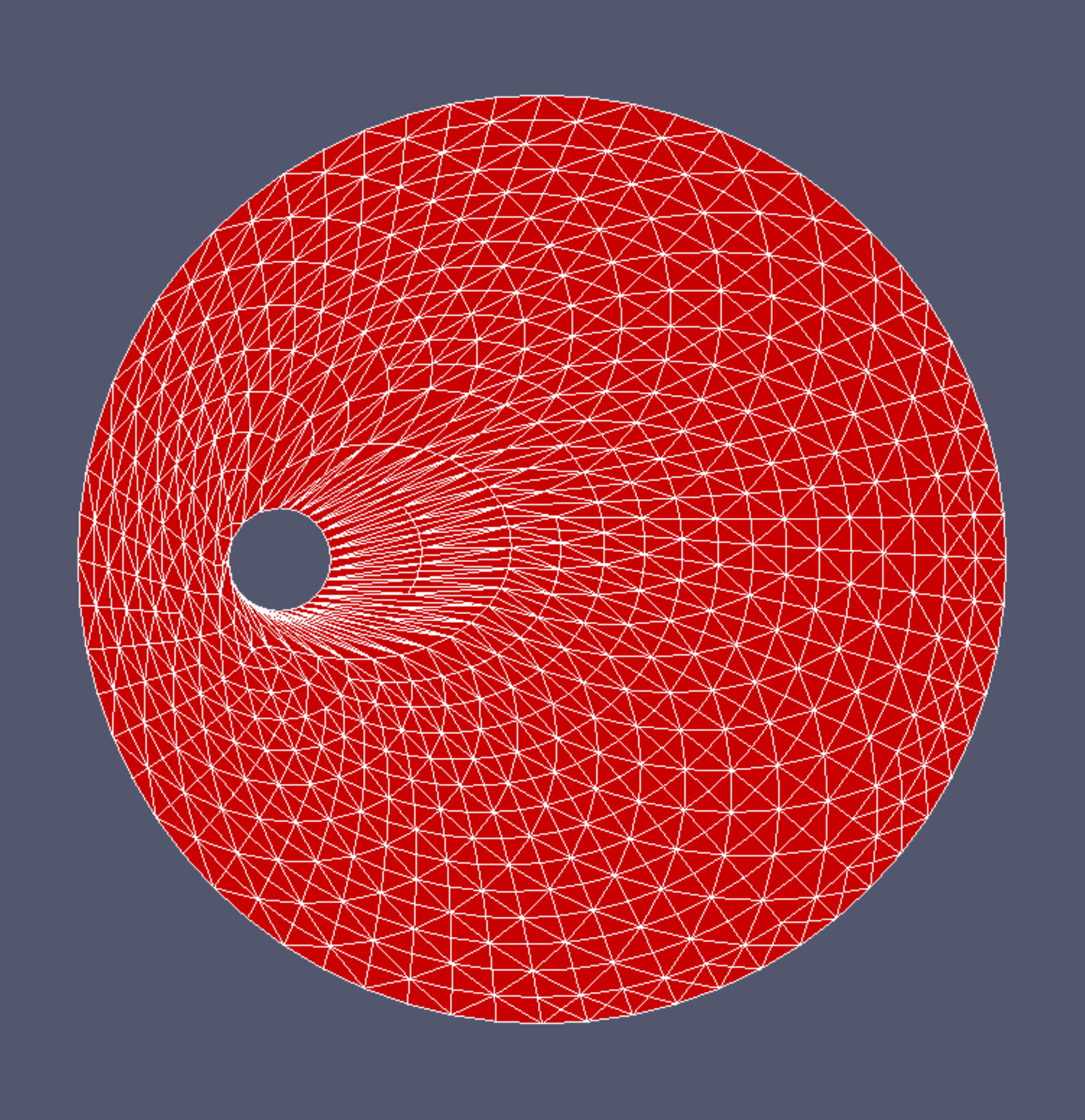}
\label{moving_inclusion_without_DeT_at_time_0_50}
} 
~~
\subfloat[][\centering Computational mesh at $t=0.75$ without redistribution.]
{\includegraphics[width=0.2\textwidth]{./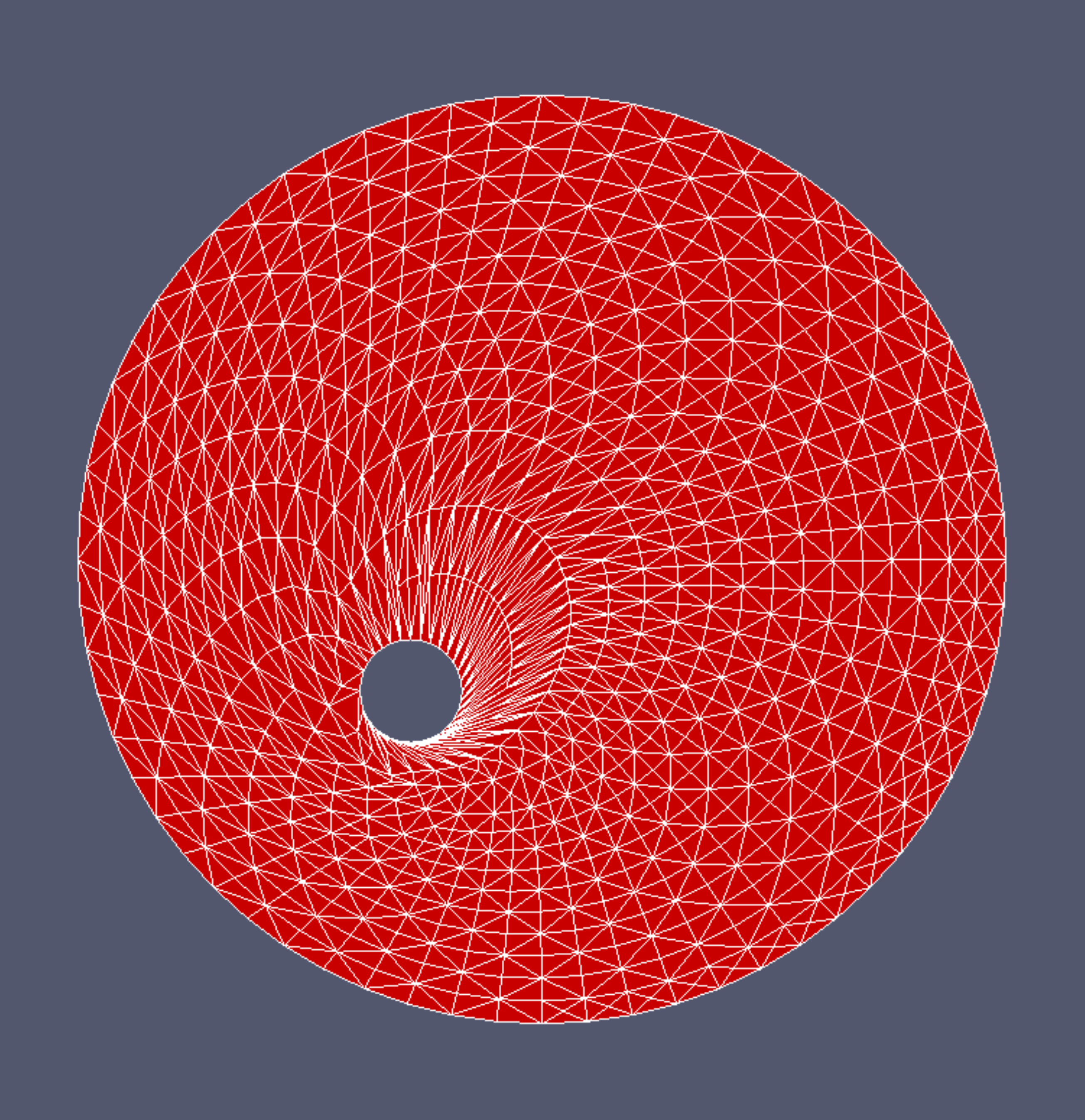}
\label{moving_inclusion_without_DeT_at_time_0_75}
} 
~~
\subfloat[][\centering Computational mesh at $t=1.0$ without redistribution.]
{\includegraphics[width=0.2\textwidth]{./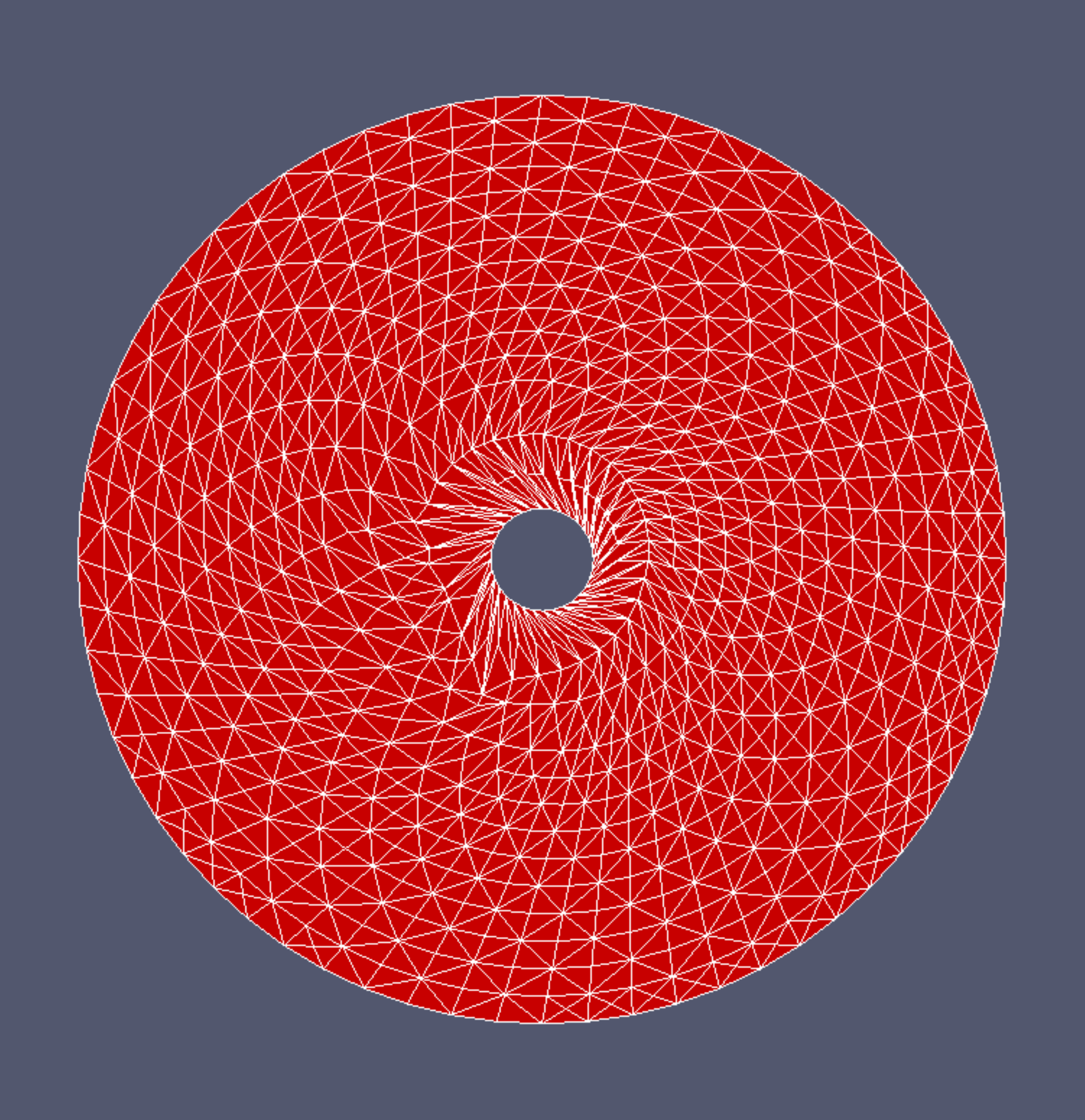}
\label{moving_inclusion_without_DeT_at_time_1_00}
} 
\caption{Comparison of the mesh behaviour for a moving hole in the interior of the disk $B_{r_1}(0)$. 
The initial mesh is presented in Figure \ref{moving_inclusion_at_time_0_0}.
In this example, the reference manifold is the cylinder (\ref{definition_cylinder}); see Figure \ref{moving_inclusion_reference_manifold_at_time_1_0}
for the reference mesh $\M_h$ at time $t=1.0$. The local mesh refinement and coarsening of the reference
mesh is due to Algorithm \ref{algo_refinement_and_coarsening_strategy}. Figures \ref{moving_inclusion_without_DeT_at_time_0_25} to \ref{moving_inclusion_without_DeT_at_time_1_00}
show the time-development of the computational mesh moved according to the velocity field defined in (\ref{velocity_field_moving_inclusion}).
This motion leads to the degeneration of the mesh (including mesh entanglements) after a short time. In contrast, our redistribution scheme
in Algorithm \ref{algo_DeTurck} (together with Algorithm \ref{algo_refinement_and_coarsening_strategy})
provides a high-quality mesh for all times;
see Figures \ref{moving_inclusion_at_time_0_25} to \ref{moving_inclusion_at_time_1_00}
as well as Figure \ref{moving_inclusion_mesh_quality}.
See Example $2.2$ for more details.
}
\label{moving_inclusion_figures}
\end{center}
\end{figure}

\gdef\gplbacktext{}%
\gdef\gplfronttext{}%
\begin{figure}
\begin{center}
 \begin{picture}(5668.00,3400.00)%
    \gplgaddtomacro\gplbacktext{%
      \csname LTb\endcsname%
      \put(660,110){\makebox(0,0)[r]{\strut{} 5}}%
      \csname LTb\endcsname%
      \put(660,577){\makebox(0,0)[r]{\strut{} 10}}%
      \csname LTb\endcsname%
      \put(660,1044){\makebox(0,0)[r]{\strut{} 15}}%
      \csname LTb\endcsname%
      \put(660,1511){\makebox(0,0)[r]{\strut{} 20}}%
      \csname LTb\endcsname%
      \put(660,1977){\makebox(0,0)[r]{\strut{} 25}}%
      \csname LTb\endcsname%
      \put(660,2444){\makebox(0,0)[r]{\strut{} 30}}%
      \csname LTb\endcsname%
      \put(660,2911){\makebox(0,0)[r]{\strut{} 35}}%
      \csname LTb\endcsname%
      \put(660,3378){\makebox(0,0)[r]{\strut{} 40}}%
      \csname LTb\endcsname%
      \put(792,-110){\makebox(0,0){\strut{} 0}}%
      \csname LTb\endcsname%
      \put(1765,-110){\makebox(0,0){\strut{} 0.2}}%
      \csname LTb\endcsname%
      \put(2737,-110){\makebox(0,0){\strut{} 0.4}}%
      \csname LTb\endcsname%
      \put(3710,-110){\makebox(0,0){\strut{} 0.6}}%
      \csname LTb\endcsname%
      \put(4682,-110){\makebox(0,0){\strut{} 0.8}}%
      \csname LTb\endcsname%
      \put(5655,-110){\makebox(0,0){\strut{} 1}}%
      \put(22,1744){\rotatebox{90}{\makebox(0,0){\strut{}$\sigma_{max}$}}}%
      \put(3223,-440){\makebox(0,0){\strut{}Time}}%
    }%
    \gplgaddtomacro\gplfronttext{%
      \csname LTb\endcsname%
      \put(4668,3205){\makebox(0,0)[r]{\strut{}with DeTurck}}%
      \csname LTb\endcsname%
      \put(4668,2985){\makebox(0,0)[r]{\strut{}without DeTurck}}%
    }%
    \gplbacktext
    \put(0,0){\includegraphics{./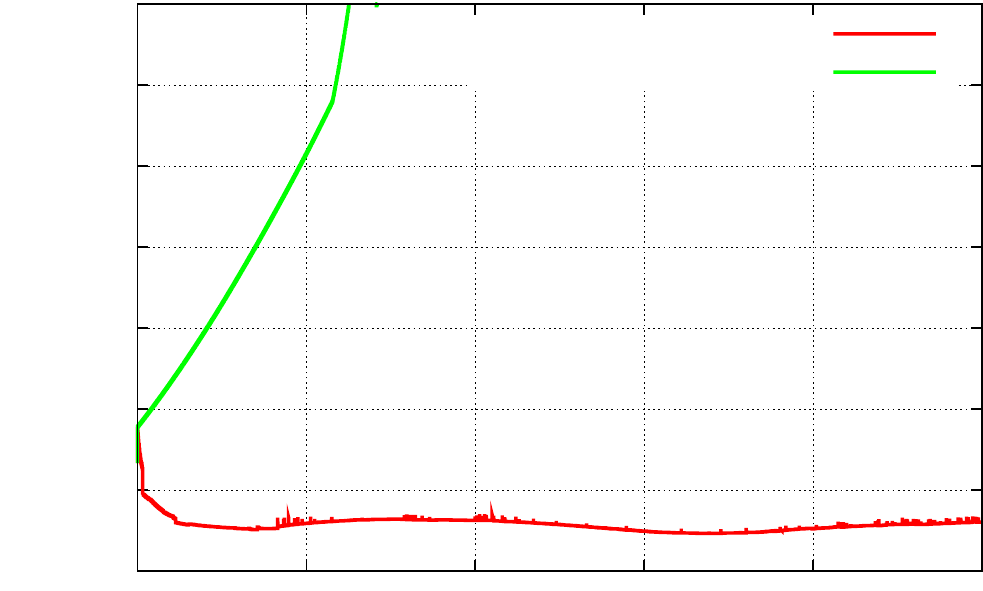}}%
    \gplfronttext
  \end{picture}%
  \vspace*{10mm}
  \caption{Mesh quality $\sigma_{max}$, see (\ref{definition_sigma_max}), 
  for the computational mesh in Figure \ref{moving_inclusion_figures}.
  The computational mesh degenerates after a short time if it is moved
  according to the velocity field defined in (\ref{velocity_field_moving_inclusion}).
  The redistribution of the mesh vertices by Algorithm \ref{algo_DeTurck}
  can prevent such mesh degenerations. See Example $2.2$ for more details.
  }
  \label{moving_inclusion_mesh_quality}
\end{center}
\end{figure}

\subsection*{Example 3: Mesh improvement for moving surfaces}
\subsubsection*{Example 3.1}
So far, we have only considered flat domains, although within our implementation
these domains were treated as hypersurfaces in $\mathbb{R}^3$. 
In this example, we study the behaviour of the DeTurck scheme for a curved surface $\Gamma(t) \subset \mathbb{R}^3$
that evolves according to (\ref{equation_of_motion_non-reparametrized}) with   
\begin{align}
 v(x_1,x_2,x_3,t) = (1.5 x_1 x_3, 0.5 x_2 x_3, r^2 \sin(4 \varphi) \cos(\pi t/2))^T,
\label{velocity_field_moving_surface}
\end{align}
where $(r, \varphi) \in [0,\infty) \times [0,2\pi)$ are such that $(x_1, x_2) = (r \cos \varphi, r \sin \varphi)$.
The initial surface is the unit disk embedded into $\mathbb{R}^3$,
that is
$$
	\Gamma(0) := \{ x \in \mathbb{R}^3 \; | \; x_1^2 + x^2_2 \leq 1 \; \textnormal{and} \; x_3 = 0 \},
$$
see Figure \ref{moving_surfaces_initial_mesh}.
The computational parameters for the simulation are $\tau = 0.02~h_{min}^2$, $\alpha = 1.0$ and $T_{adapt} = 10^{-3}$.
We first observe that the mesh moved according to Algorithm \ref{algo_DeTurck} properly approximates
the shape of $\Gamma(t)$ on the time interval $[0,0.8]$, see Figures \ref{moving_surfaces_initial_mesh} 
to \ref{moving_surfaces_with_DeT_at_time_0_80_zoom_2}. In Figure \ref{moving_surface_figures}
the red area always shows the shape that is computed without the use of Algorithm \ref{algo_DeTurck}.
Furthermore, Figure \ref{Ex3_moving_surfaces_mesh_quality} once again shows that the DeTurck scheme preserves the mesh quality.
\begin{figure}
\begin{center}
\subfloat[][\centering Initial mesh.]
{\includegraphics[width=0.266\textwidth]{./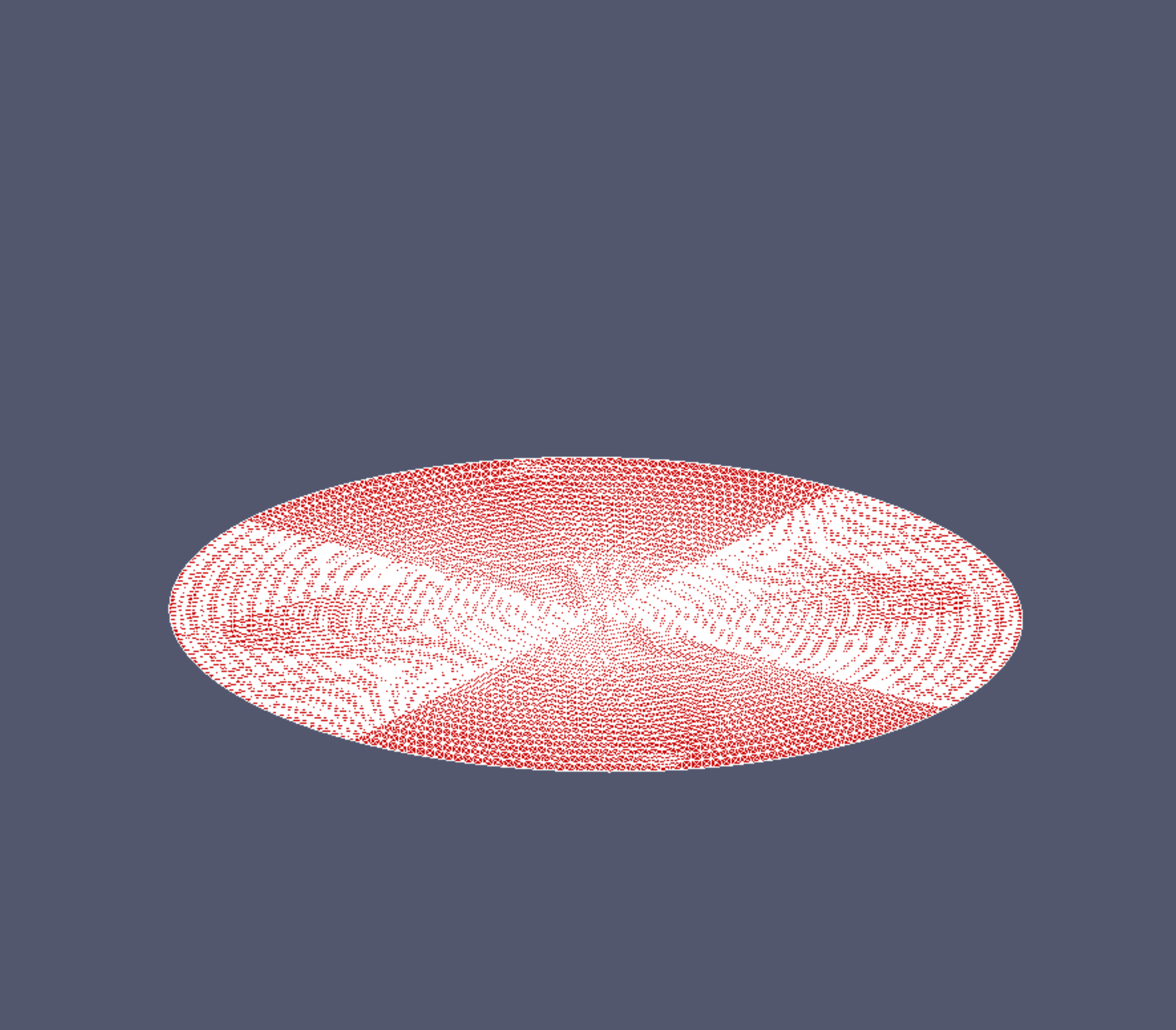}
\label{moving_surfaces_initial_mesh}
} 
~~
\subfloat[][\centering Computational mesh at $t=0.4$ with DeTurck redistribution.]
{\includegraphics[width=0.266\textwidth]{./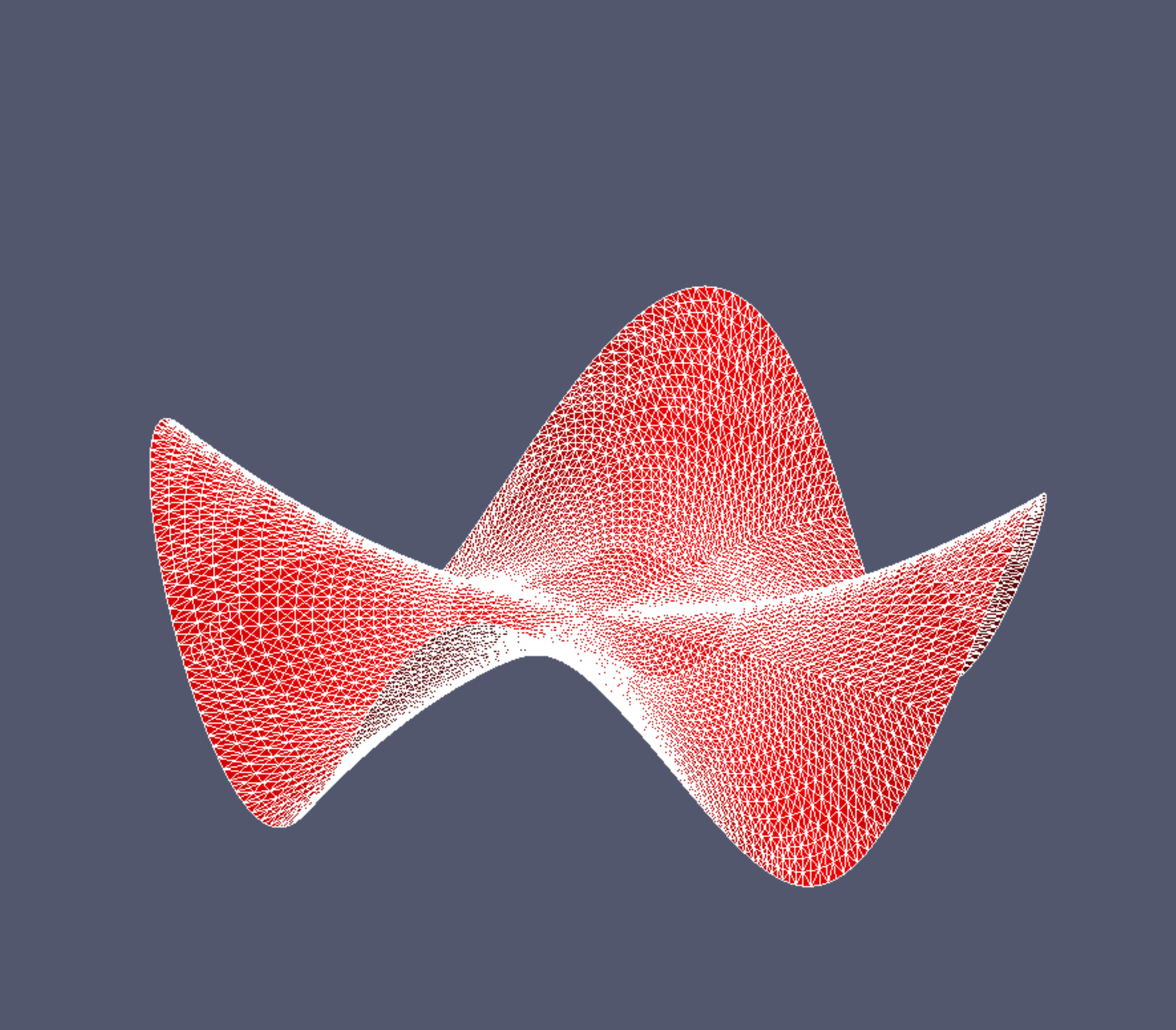}
\label{moving_surfaces_with_DeT_at_time_0_40}
}
~~
\subfloat[][\centering Computational mesh at $t=0.8$ with DeTurck redistribution.]
{\includegraphics[width=0.266\textwidth]{./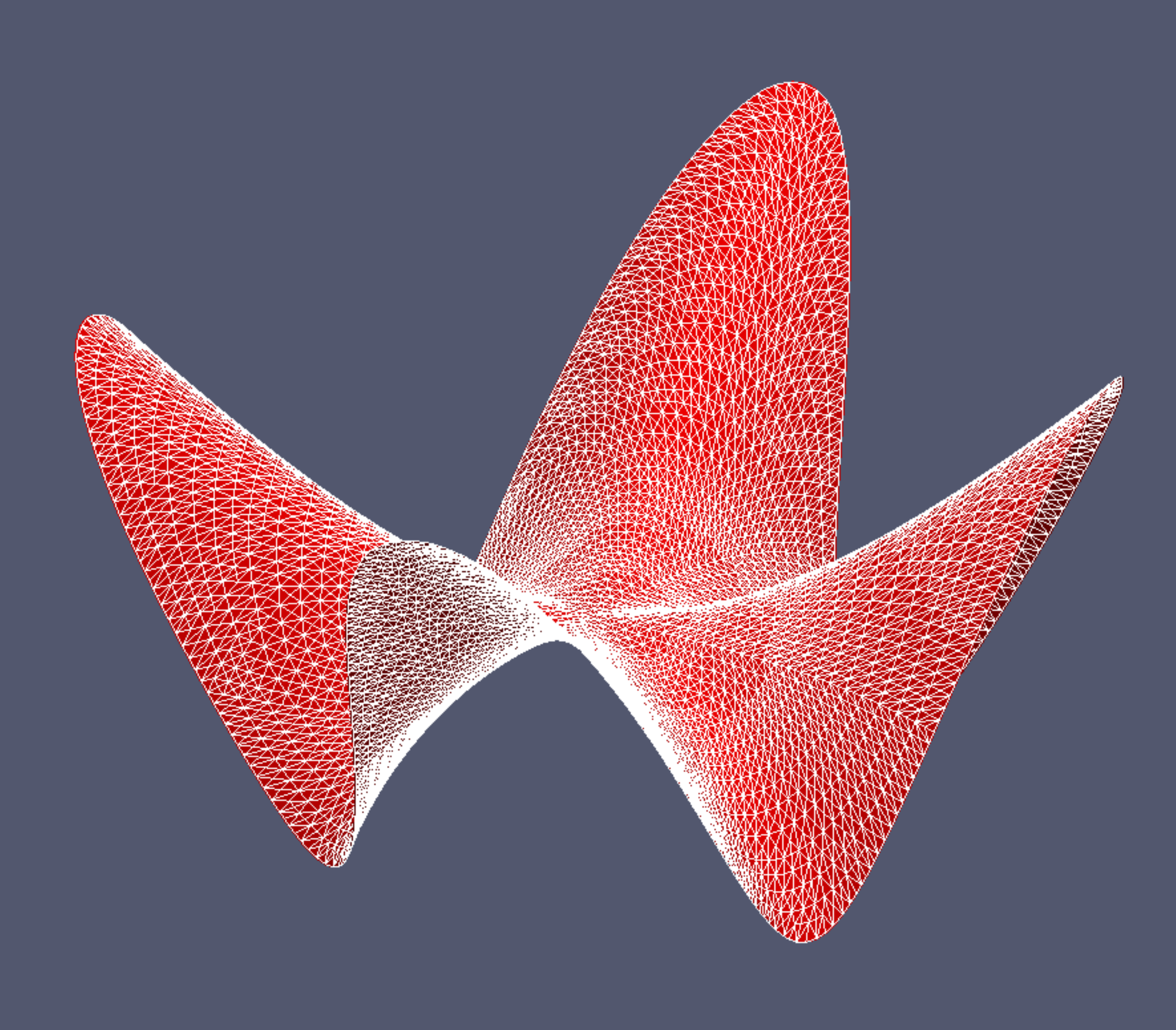}
\label{moving_surfaces_with_DeT_at_time_0_80}
} 
\\ 
\subfloat[][\centering Computational mesh at $t=0.8$ with DeTurck redistribution (Zoom).]
{\includegraphics[width=0.266\textwidth]{./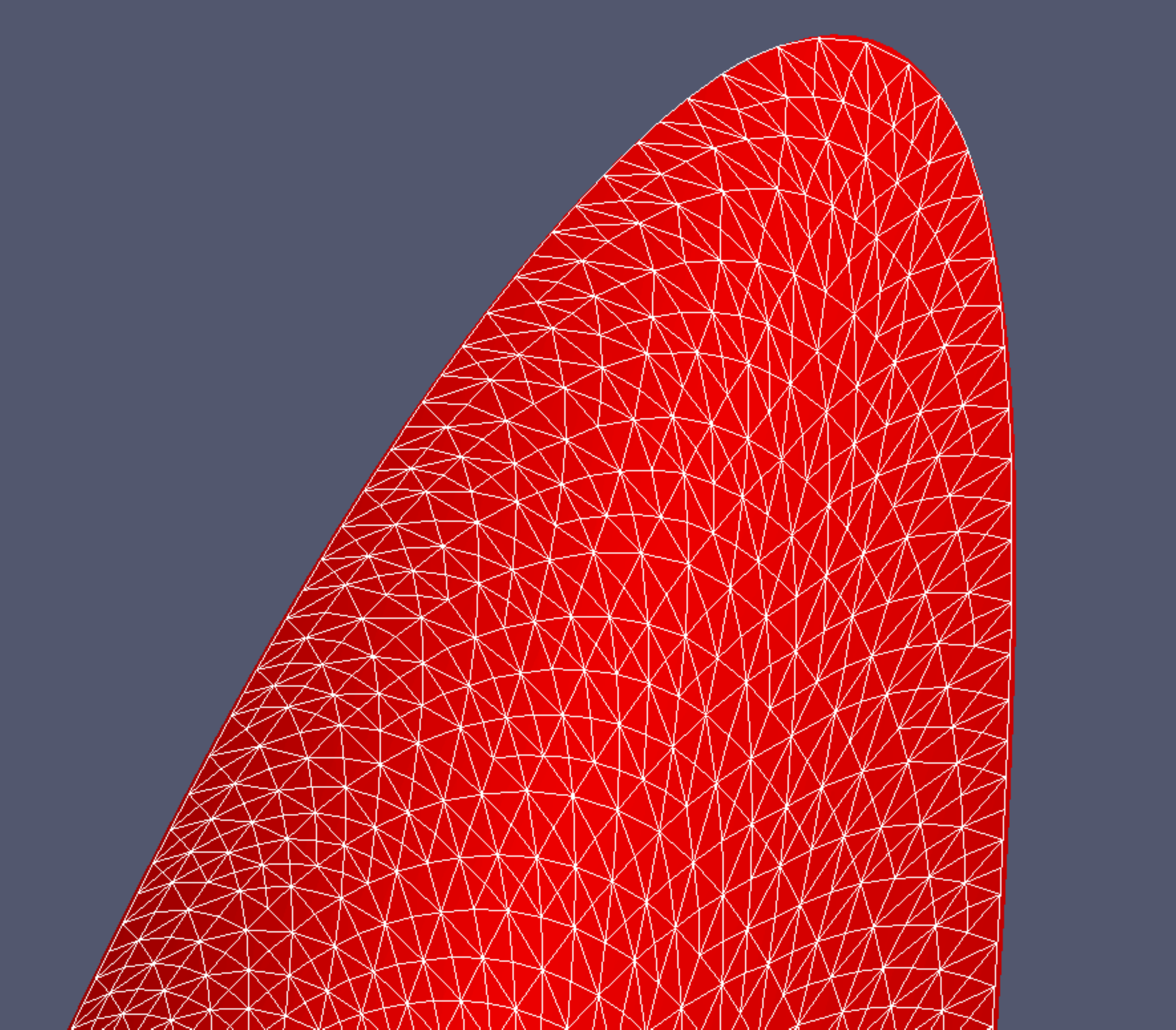}
\label{moving_surfaces_with_DeT_at_time_0_80_zoom}
} 
~~
\subfloat[][\centering Computational mesh at $t=0.8$ with DeTurck redistribution (Zoom).]
{\includegraphics[width=0.266\textwidth]{./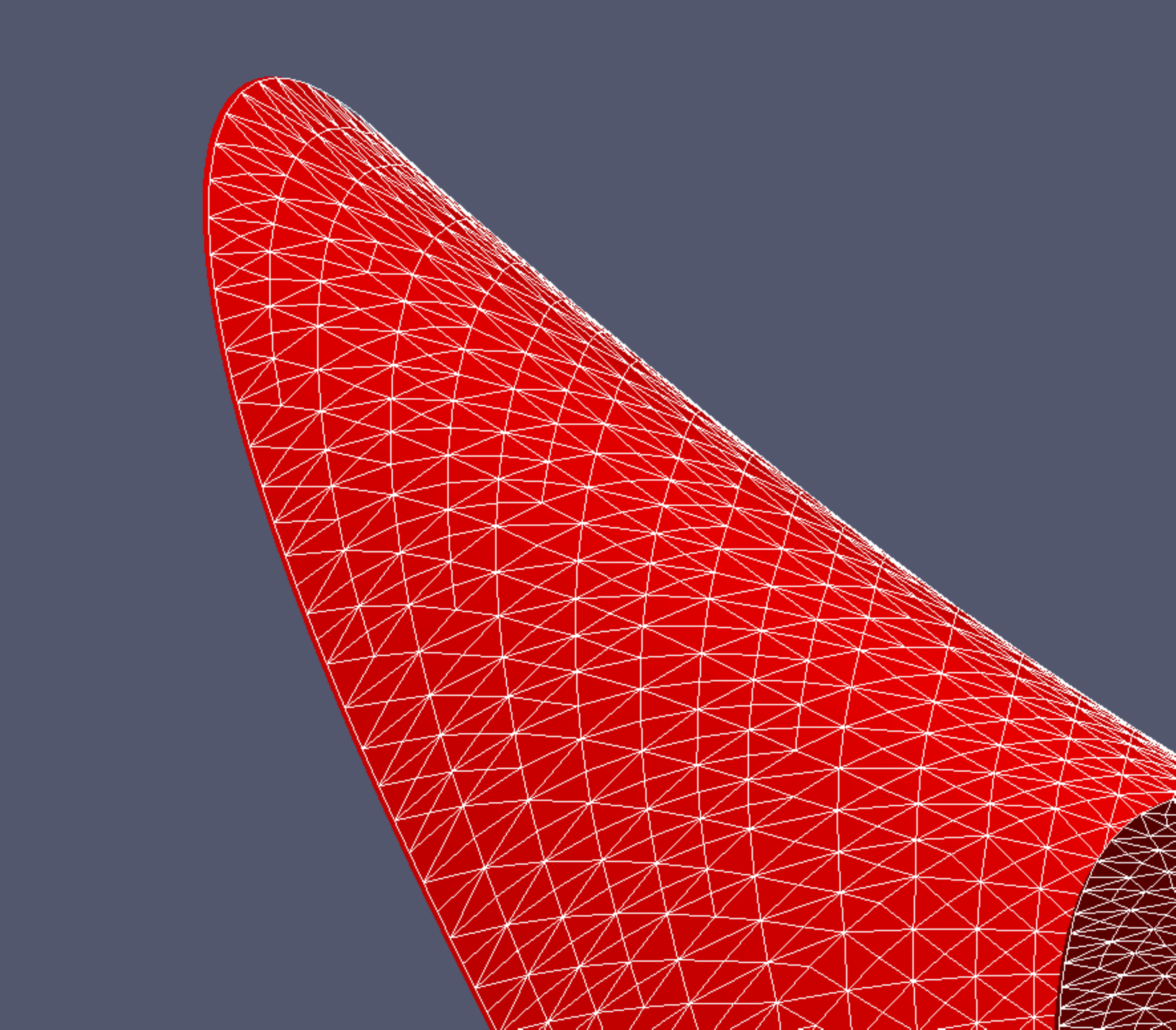}
\label{moving_surfaces_with_DeT_at_time_0_80_zoom_2}
} 
\\
\subfloat[][\centering Computational mesh at $t=0.8$ without DeTurck redistribution (Zoom).]
{\includegraphics[width=0.266\textwidth]{./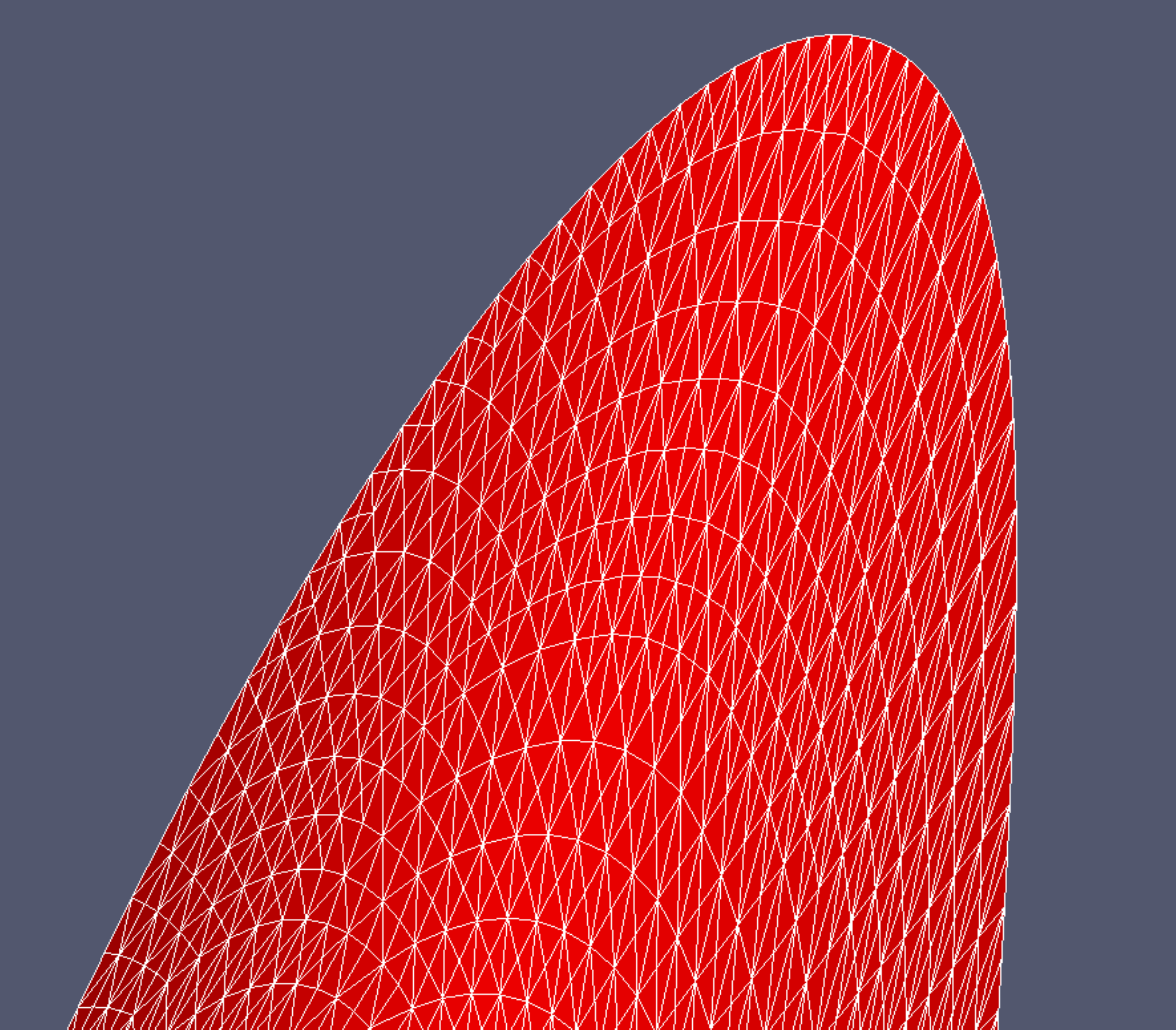}
\label{moving_surfaces_without_DeT_at_time_0_80_zoom}
} 
~~ 
\subfloat[][\centering Computational mesh at $t=0.8$ without DeTurck redistribution (Zoom).]
{\includegraphics[width=0.266\textwidth]{./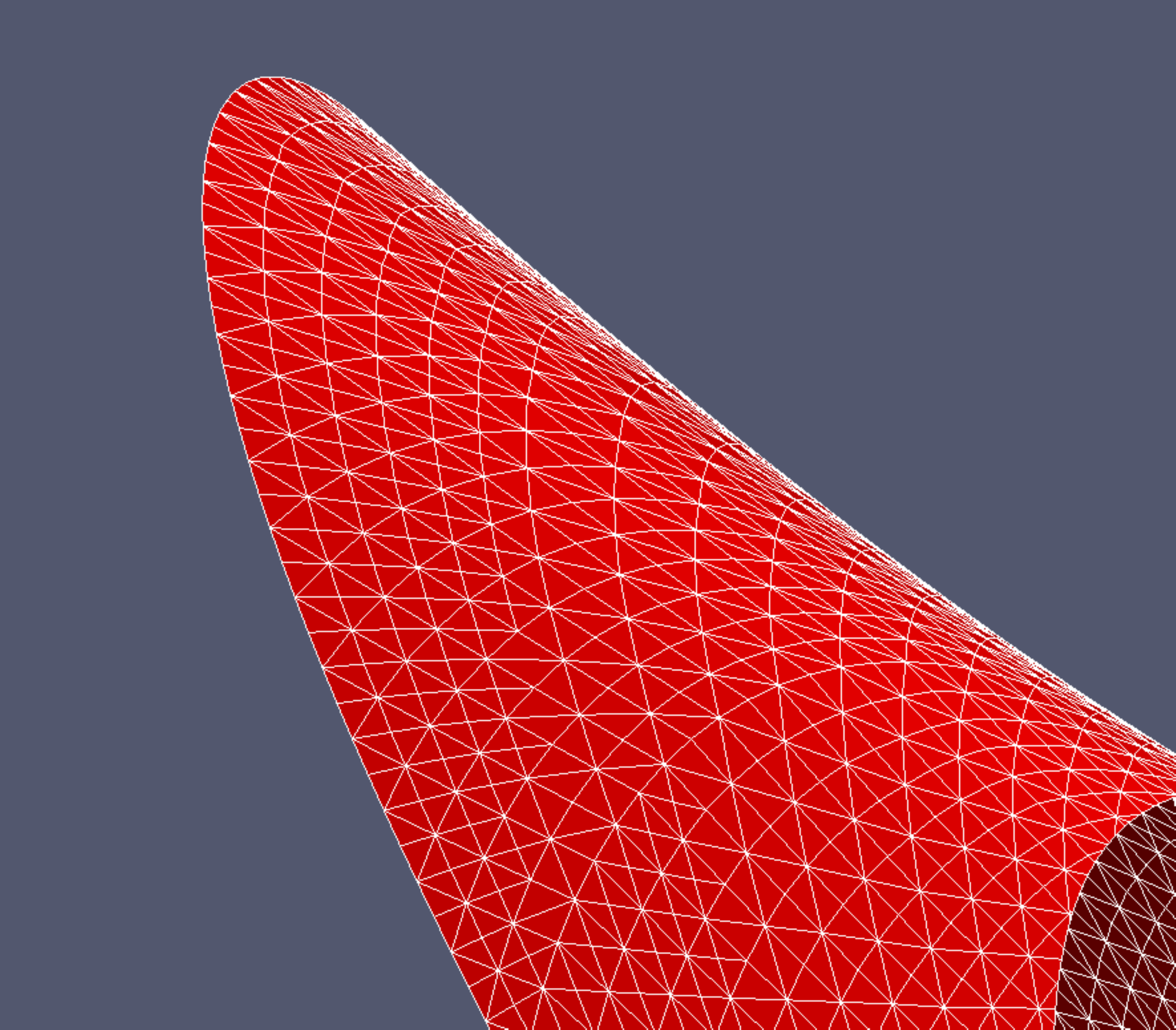}
\label{moving_surfaces_without_DeT_at_time_0_80_zoom_2}
} 
\caption{Comparison of the mesh behaviour for a moving surface in $\mathbb{R}^3$.
The initial surface is shown in Figure \ref{moving_surfaces_initial_mesh}. The surface is deformed 
according to the velocity field in (\ref{velocity_field_moving_surface}).
Figures \ref{moving_surfaces_with_DeT_at_time_0_40} to \ref{moving_surfaces_with_DeT_at_time_0_80_zoom_2}
show the computational mesh obtained by Algorithm \ref{algo_DeTurck}, while
Figures \ref{moving_surfaces_without_DeT_at_time_0_80_zoom} and \ref{moving_surfaces_without_DeT_at_time_0_80_zoom_2}
show the mesh without redistribution of mesh vertices. Figure \ref{Ex3_moving_surfaces_mesh_quality} shows 
that in Algorithm \ref{algo_DeTurck}, the mesh quality is not affected by the surface deformation. 
See Example $3.1$ for more details.
}
\label{moving_surface_figures}
\end{center}
\end{figure}
\gdef\gplbacktext{}%
\gdef\gplfronttext{}%
\begin{figure}
\begin{center}
\begin{picture}(5668.00,3400.00)%
    \gplgaddtomacro\gplbacktext{%
      \csname LTb\endcsname%
      \put(660,110){\makebox(0,0)[r]{\strut{} 5}}%
      \csname LTb\endcsname%
      \put(660,764){\makebox(0,0)[r]{\strut{} 10}}%
      \csname LTb\endcsname%
      \put(660,1417){\makebox(0,0)[r]{\strut{} 15}}%
      \csname LTb\endcsname%
      \put(660,2071){\makebox(0,0)[r]{\strut{} 20}}%
      \csname LTb\endcsname%
      \put(660,2724){\makebox(0,0)[r]{\strut{} 25}}%
      \csname LTb\endcsname%
      \put(660,3378){\makebox(0,0)[r]{\strut{} 30}}%
      \csname LTb\endcsname%
      \put(792,-110){\makebox(0,0){\strut{} 0}}%
      \csname LTb\endcsname%
      \put(1400,-110){\makebox(0,0){\strut{} 0.1}}%
      \csname LTb\endcsname%
      \put(2008,-110){\makebox(0,0){\strut{} 0.2}}%
      \csname LTb\endcsname%
      \put(2616,-110){\makebox(0,0){\strut{} 0.3}}%
      \csname LTb\endcsname%
      \put(3224,-110){\makebox(0,0){\strut{} 0.4}}%
      \csname LTb\endcsname%
      \put(3831,-110){\makebox(0,0){\strut{} 0.5}}%
      \csname LTb\endcsname%
      \put(4439,-110){\makebox(0,0){\strut{} 0.6}}%
      \csname LTb\endcsname%
      \put(5047,-110){\makebox(0,0){\strut{} 0.7}}%
      \csname LTb\endcsname%
      \put(5655,-110){\makebox(0,0){\strut{} 0.8}}%
      \put(22,1744){\rotatebox{90}{\makebox(0,0){\strut{}$\sigma_{max}$}}}%
      \put(3223,-440){\makebox(0,0){\strut{}Time}}%
    }%
    \gplgaddtomacro\gplfronttext{%
      \csname LTb\endcsname%
      \put(4668,3205){\makebox(0,0)[r]{\strut{}with DeTurck}}%
      \csname LTb\endcsname%
      \put(4668,2985){\makebox(0,0)[r]{\strut{}without DeTurck}}%
    }%
    \gplbacktext
    \put(0,0){\includegraphics{./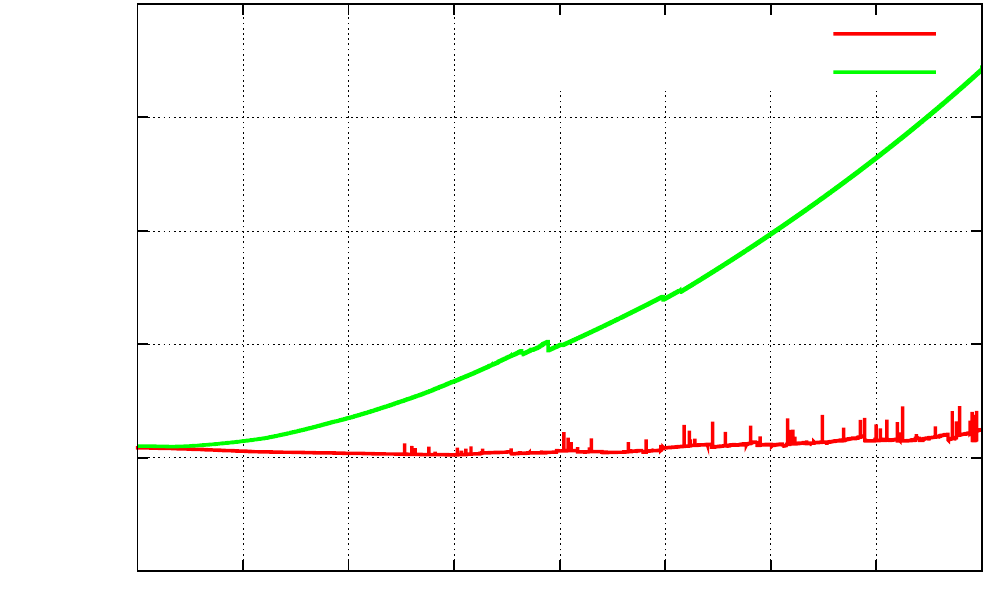}}%
    \gplfronttext
  \end{picture}%
  \vspace*{10mm}
  \caption{Mesh quality $\sigma_{max}$, see (\ref{definition_sigma_max}), for the computational mesh in Figure
  \ref{moving_surface_figures}. If Algorithms \ref{algo_DeTurck} and \ref{algo_refinement_and_coarsening_strategy}
  are applied, the surface deformation has almost no effect on the quality of the computational mesh
  in the time interval $[0,0.8]$. See Example $3.1$ for more details.}
  \label{Ex3_moving_surfaces_mesh_quality}
\end{center}
\end{figure}

\subsubsection*{Example 3.2}
So far, we have only used the DeTurck scheme to produce a nice mesh for a moving surface with a known velocity field.
We will now demonstrate that the redistribution of the mesh vertices is indeed very useful in order to
solve PDEs on evolving surfaces. 
We will couple the evolution of the surface to a PDE by
considering the mean curvature flow, that is 
$x_t = - (H \vec{n}) \circ x$. Here, $H := \nabla_{\Gamma} \cdot \vec{n}$ 
is the sum of the principal curvatures,
that is the mean curvature, and $\vec{n}$ denotes a unit normal to $\Gamma(t)$.
Note that the mean curvature flow does not depend on the choice of $\vec{n}$. 
It is well known, that this evolution equation is equivalent to the heat equation
$$
	x_t = \Delta_{g(t)} x.
$$
We will here consider Dirichlet boundary conditions. 
The initial shape $\Gamma(0) := x_0(\M)$ is shown in
Figure \ref{MCF_BC_initial_mesh}. It can be parametrized by
$$
	X(r, \varphi) = ( r  (1 + \tfrac{1}{4} \sin(4 \varphi)) \cos \varphi,
					  r  (1 + \tfrac{1}{4} \sin(4 \varphi)) \sin \varphi, 
					  \tfrac{1}{4} r^2 \sin(4 \varphi) + \tfrac{3}{4} (1 - r^2))^T
$$ 
with $(r,\varphi) \in [0,1] \times [0, 2\pi)$.
In order to compute this flow without mesh redistribution, we determine the solutions $u_h^{m+1} \in V_h(\Gamma_h^m)^3$ of
\begin{align}
 \int_{\Gamma_h^m} \tfrac{1}{\tau} I_h (u^{m+1}_h \cdot \varphi_h) + \nabla_{\Gamma_h^m} u^{m+1}_h : \nabla_{\Gamma_h^m} \varphi_h \; do
 = \int_{\Gamma_h^m} \tfrac{1}{\tau} I_h(\tilde{u}_h^m \cdot \varphi_h) \; do
 \label{scheme_MCF}
\end{align}
for all $\varphi_h \in \overset{\circ}{V}_h(\Gamma_h^m)^3$ with $u_h^{m+1} = \tilde{u}_h^m$ on $\partial \Gamma_h^m$. 
This scheme is a variation of the scheme proposed in \cite{Dz91}.
For the simulation with DeTurck redistribution, we modify Algorithm \ref{algo_DeTurck} in the following way: 
We replace the first equation in (\ref{equation_for_u_h}) by
\begin{align*}
&
	\int_{\Gamma_h^m} \tfrac{1}{\tau} I_h (u^{m+1}_{h} \cdot \varphi_h)  
		 + \nabla_{\Gamma_h^m} u^{m+1}_h : \nabla_{\Gamma_h^m} \varphi_h
		 + \tfrac{1}{\alpha } \sum_{\sigma, \kappa = 1}^3 
		 ((\hat{H}^m_h)^{-1} \nabla_{{\Gamma}_h^m} \hat{y}_{h, \sigma}^{m})_{\kappa} 
		 I_h \big({\tilde{\zeta}}^{m,\sigma}_h \varphi_h^\kappa \big)\; do
	\\	 
	& \qquad = \int_{\Gamma_h^m} \tfrac{1}{\tau} I_h (\tilde{u}^{m}_h \cdot \varphi_h) \; do,
	\quad \forall \varphi_h \in \overset{\circ}{V}_h({\Gamma}_h^m)^{3}.
\end{align*}
The second equation in (\ref{equation_for_u_h}) is not changed. This is our new DeTurck scheme for the computation of the mean curvature flow
with Dirichlet boundary conditions; see also the recent paper \cite{EF15} on the mean curvature-DeTurck flow on closed manifolds.
The parameters used for Figure \ref{MCF_figures} were $\tau = 0.01~h_{min}^2$, $\alpha = 1.0$, and $T_{adapt} = 10^{-3}$.
The numerical results in Figures \ref{MCF_figures} and \ref{Ex3_MCF_BC_mesh_quality} show that
without the redistribution of the mesh points the computational mesh totally degenerates.
In contrast, the mesh quality under the DeTurck scheme is preserved.
\begin{remark}
In the above example, we have considered the motion of a surface with fixed boundary.
For this class of problems a variant of the approach presented in this paper might be more suitable.
Reparametrizing the evolution equations by solutions to the harmonic map heat flow with Dirichlet boundary conditions
seems to be more natural if the boundary is fixed. However, it is not clear whether a scheme based on 
such a reparametrization is able to preserve the mesh quality in the interior of the surface. 
This problem has to be studied elsewhere.
\end{remark}

\begin{figure}
\begin{center}
\subfloat[][\centering Initial mesh.]
{\includegraphics[width=0.266\textwidth]{./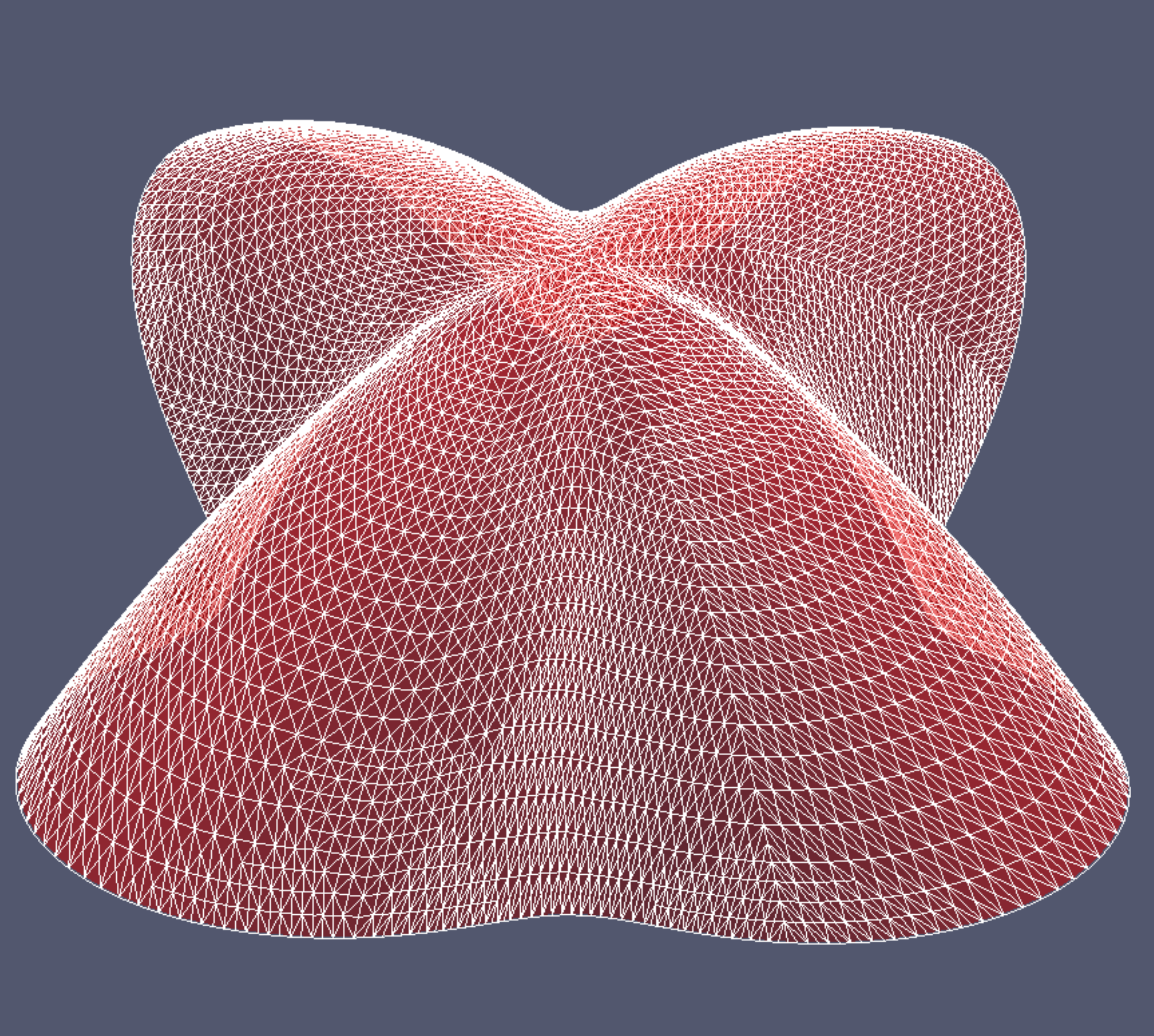}
\label{MCF_BC_initial_mesh}
} 
~~
\subfloat[][\centering Computational mesh at $t=1.0$ without redistribution.]
{\includegraphics[width=0.266\textwidth]{./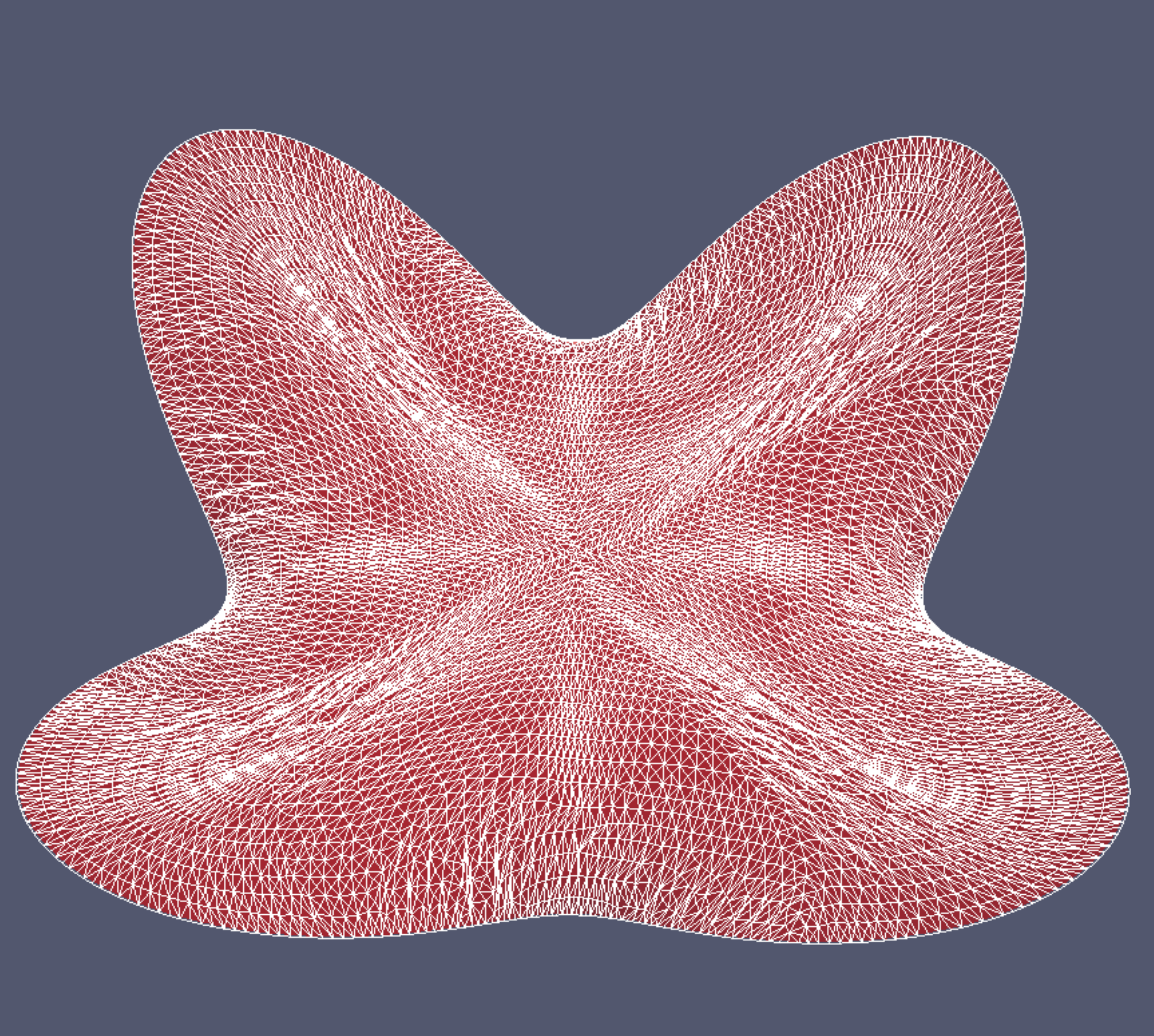}
\label{MCF_BC_without_DeT_at_time_1_00}
} 
~~
\subfloat[][\centering Computational mesh at $t=1.0$ with DeTurck redistribution.]
{\includegraphics[width=0.266\textwidth]{./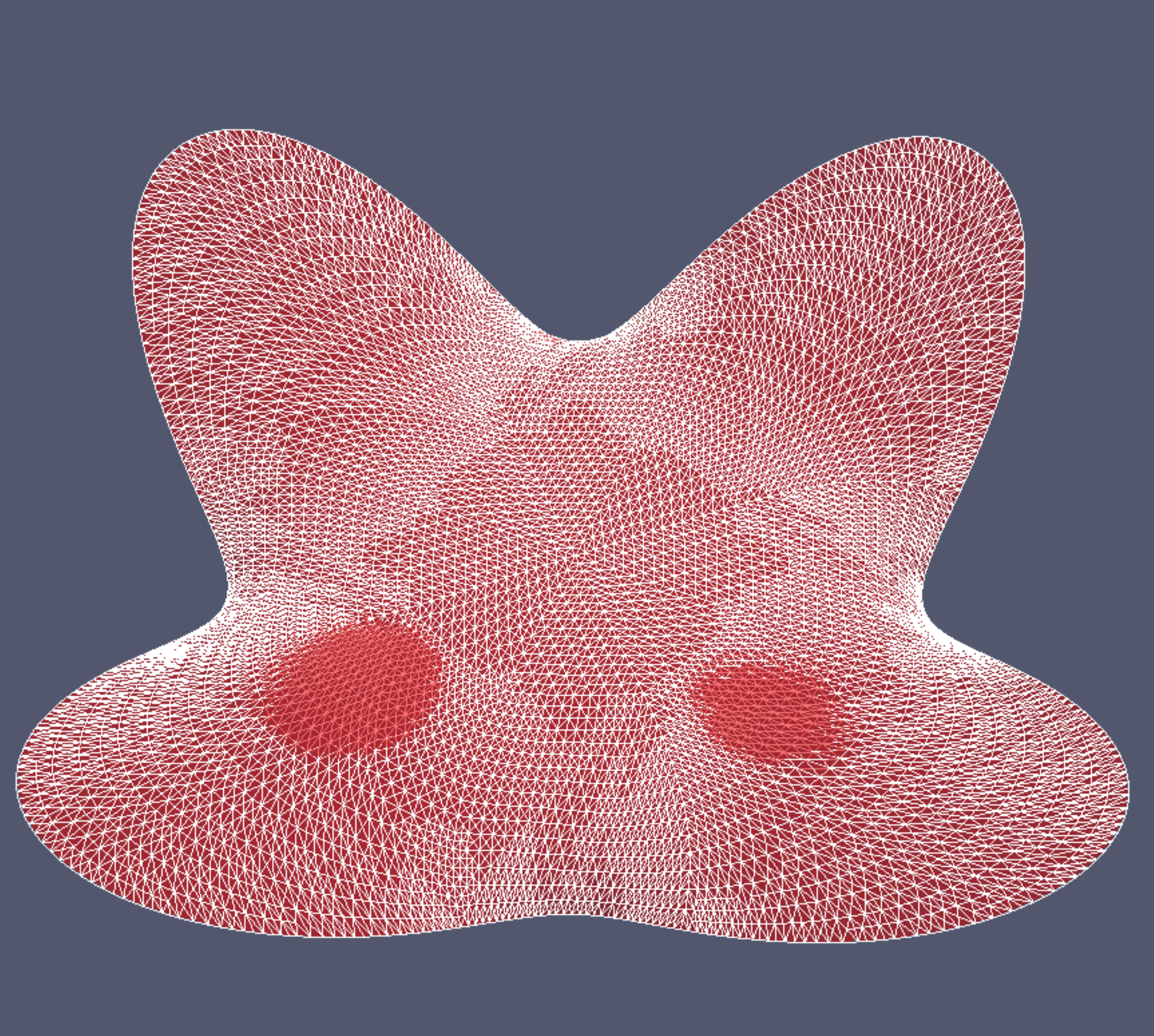}
\label{MCF_BC_with_DeT_at_time_1_00}
}
\\
\subfloat[][\centering Computational mesh at $t=1.0$ without redistribution (Zoom).]
{\includegraphics[width=0.266\textwidth]{./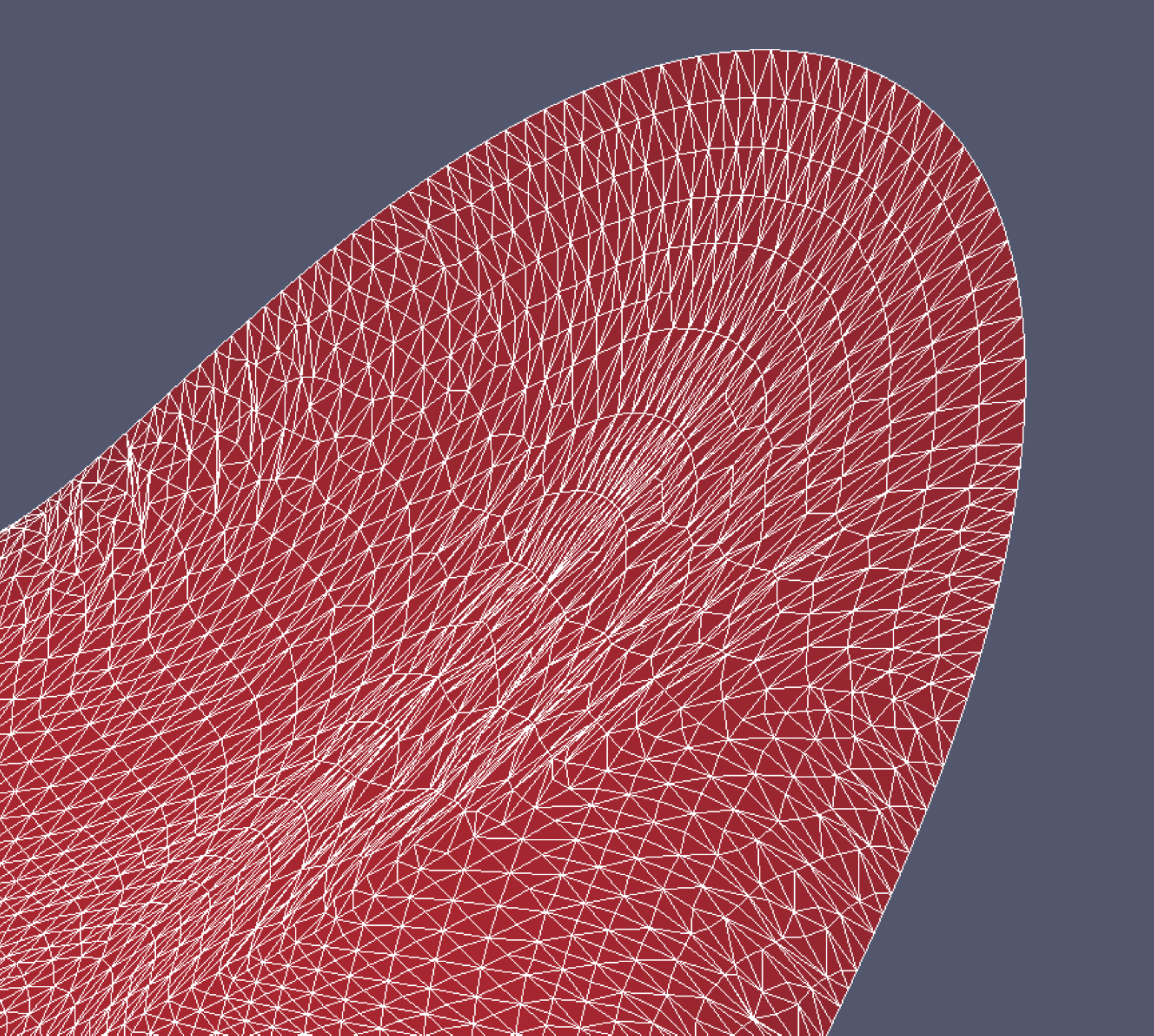}
\label{MCF_BC_without_DeT_at_time_1_00_zoom}
} 
~~
\subfloat[][\centering Computational mesh at $t=1.0$ with DeTurck redistribution (Zoom).]
{\includegraphics[width=0.266\textwidth]{./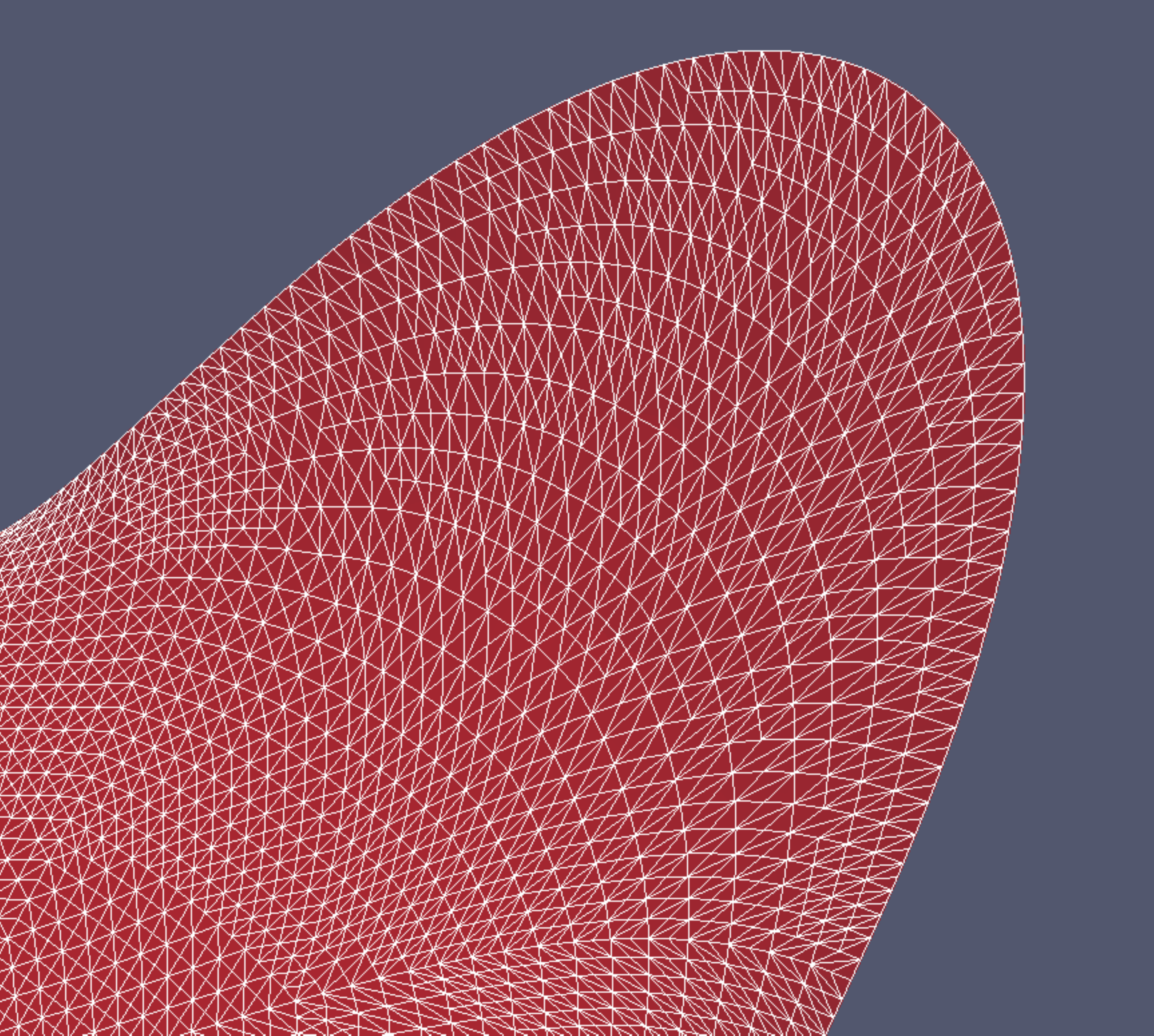}
\label{MCF_BC_with_DeT_at_time_1_00_zoom}
}
\caption{Comparison of the mesh behaviour for a surface that is deformed according to the mean curvature flow
  with Dirichlet boundary conditions. The initial surface is presented in Figure \ref{MCF_BC_initial_mesh}.
  Without redistribution the mesh totally degenerates, see Figures \ref{MCF_BC_without_DeT_at_time_1_00}
  and \ref{MCF_BC_without_DeT_at_time_1_00_zoom}.
  By using Algorithms \ref{algo_DeTurck} and \ref{algo_refinement_and_coarsening_strategy},
  the mesh remains regular; see Figures \ref{MCF_BC_with_DeT_at_time_1_00} and \ref{MCF_BC_with_DeT_at_time_1_00_zoom}
  and Figure \ref{Ex3_MCF_BC_mesh_quality}. 
  Note that the DeTurck scheme also leads to a redistribution of the vertices at the boundary of the surface.  
  See Example $3.2$ for more details.
}
\label{MCF_figures}
\end{center}
\end{figure}
\gdef\gplbacktext{}%
\gdef\gplfronttext{}%
\begin{figure}
\begin{center}
\begin{picture}(5668.00,3400.00)%
    \gplgaddtomacro\gplbacktext{%
      \csname LTb\endcsname%
      \put(660,361){\makebox(0,0)[r]{\strut{} 20}}%
      \csname LTb\endcsname%
      \put(660,697){\makebox(0,0)[r]{\strut{} 40}}%
      \csname LTb\endcsname%
      \put(660,1032){\makebox(0,0)[r]{\strut{} 60}}%
      \csname LTb\endcsname%
      \put(660,1367){\makebox(0,0)[r]{\strut{} 80}}%
      \csname LTb\endcsname%
      \put(660,1702){\makebox(0,0)[r]{\strut{} 100}}%
      \csname LTb\endcsname%
      \put(660,2037){\makebox(0,0)[r]{\strut{} 120}}%
      \csname LTb\endcsname%
      \put(660,2372){\makebox(0,0)[r]{\strut{} 140}}%
      \csname LTb\endcsname%
      \put(660,2708){\makebox(0,0)[r]{\strut{} 160}}%
      \csname LTb\endcsname%
      \put(660,3043){\makebox(0,0)[r]{\strut{} 180}}%
      \csname LTb\endcsname%
      \put(660,3378){\makebox(0,0)[r]{\strut{} 200}}%
      \csname LTb\endcsname%
      \put(792,-110){\makebox(0,0){\strut{} 0}}%
      \csname LTb\endcsname%
      \put(1765,-110){\makebox(0,0){\strut{} 0.2}}%
      \csname LTb\endcsname%
      \put(2737,-110){\makebox(0,0){\strut{} 0.4}}%
      \csname LTb\endcsname%
      \put(3710,-110){\makebox(0,0){\strut{} 0.6}}%
      \csname LTb\endcsname%
      \put(4682,-110){\makebox(0,0){\strut{} 0.8}}%
      \csname LTb\endcsname%
      \put(5655,-110){\makebox(0,0){\strut{} 1}}%
      \put(-110,1744){\rotatebox{90}{\makebox(0,0){\strut{}$\sigma_{max}$}}}%
      \put(3223,-440){\makebox(0,0){\strut{}Time}}%
    }%
    \gplgaddtomacro\gplfronttext{%
      \csname LTb\endcsname%
      \put(4668,3205){\makebox(0,0)[r]{\strut{}with DeTurck}}%
      \csname LTb\endcsname%
      \put(4668,2985){\makebox(0,0)[r]{\strut{}without DeTurck}}%
    }%
    \gplbacktext
    \put(0,0){\includegraphics{./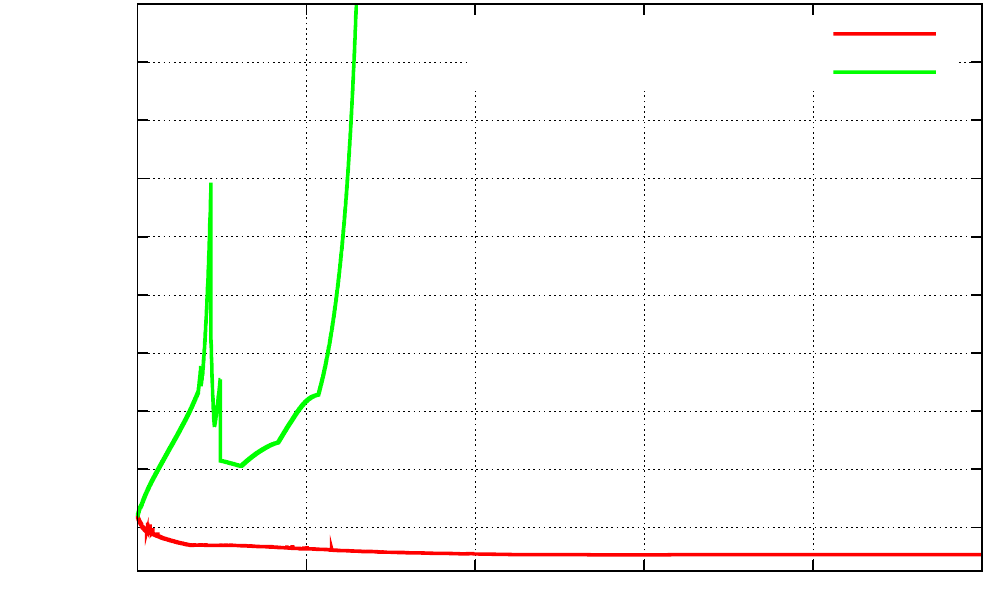}}%
    \gplfronttext
  \end{picture}%
  \vspace*{10mm}
  \caption{Mesh quality $\sigma_{max}$, see (\ref{definition_sigma_max}), for the computational mesh in Figure
  \ref{MCF_figures}. Without the redistribution of the mesh vertices induced by Algorithm \ref{algo_DeTurck}
  the motion by mean curvature leads to a strong degeneration of the computational mesh.
  See Example $3.2$ for more details.
  }
  \label{Ex3_MCF_BC_mesh_quality}
\end{center}
\end{figure}

\subsection*{Example 4: A free boundary problem: The Hele-Shaw flow coupled to the DeTurck reparametrization}
Free and moving boundary problems, \cite{EllOck82},  provide a wide field of applications in which moving meshes might be useful. 
A classical well studied problem is that of Hele-Shaw flow, \cite{EllOck82, GusVas06, KelHin97}.
Let $\Gamma(t) \subset \mathbb{R}^2$ be a bounded domain in $\mathbb{R}^2$ and 
$p: \Gamma(t) \rightarrow \mathbb{R}$ the solution to the following boundary value problem
\begin{align*}
	&   \Delta p = \delta_{q}, 
		\quad \textnormal{in $\Gamma(t)$,}
	\\
	& p = \sigma \kappa, \quad \textnormal{on $\partial \Gamma(t)$,}
\end{align*}
where $\Delta$ denotes the usual Laplacian in $\mathbb{R}^2$. 
$\sigma \in (0, \infty)$ is a surface tension constant and $\kappa$ is the curvature
of the free boundary. Here, the curvature $\kappa$ is supposed to be positive 
when the domain $\Gamma(t)$ is convex. 
We choose $\Gamma(0)$ to be the unit disk with center $(0.0, -0.5) \in \mathbb{R}^2$.
There is a sink at the point $q = (0.0, 0.0) \in \Gamma(0)$. 
The co-normal velocity $v_{\partial \Gamma}$ of $\partial \Gamma(t)$ satisfies the kinematic boundary condition
\begin{align}
	v_{\partial \Gamma} = - \tfrac{1}{12} (\nabla_{\nu(t)} p) \nu(t) \quad \textnormal{on $\partial \Gamma(t)$}.
	\label{Hele_shaw_velocity_on_the_boundary}
\end{align}
In order to obtain a base velocity field $v$ for the parametrization of $\Gamma(t)$, we impose $v=v_{\partial \Gamma}$ on $\partial \Gamma(t)$ and
\begin{equation}
  \Delta v = 0 \quad \textnormal{in $\Gamma(t)$,}
  \label{Hele_Shaw_velocity}
\end{equation}
instead of taking the physical velocity $- \tfrac{1}{12} \nabla p$  which is singular at the sink.
In order to solve the Hele-Shaw flow, we determine the solution $\tilde{p}^m_h \in V_h(\Gamma_h^m)$ of
\begin{align*}
&	\int_{\Gamma^m_h} \nabla \tilde{p}^m_h \cdot \nabla \varphi_h \; do
	= 0, \quad \textnormal{$\forall \varphi_h \in \overset{\circ}{V}_h(\Gamma_h^m)$,}
\\
&	\int_{\partial \Gamma^m_h} I_h( \tilde{p}^m_h \psi_h ) \; do
	= \sigma \int_{\partial \Gamma^m_h} 
		\nabla_{\partial \Gamma^m_h} id_{|\partial \Gamma^m_h} :
		\nabla_{\partial \Gamma^m_h} I_h (\nu_h^m \psi_h) \; do 
		- \int_{\partial \Gamma^m_h} \ I_h (G_q \psi_h) \; do,
\end{align*}
for all $\psi_h \in V_h(\partial \Gamma_h^m)$,
where $G_q(x) = \tfrac{1}{2\pi} \log(|x - q|)$. The vector field $\nu_h^m \in V_h(\Gamma_h^m)^n$ is defined in each vertex $p_j \in \partial \Gamma_h^m$ 
to be the normalized sum of the two outwards co-normals associated with the two adjacent boundary simplices of $p_j$. 
We compute an approximation
$v_h^m \in V_h(\Gamma^m_h)$ to the velocity field $v$ defined in
(\ref{Hele_shaw_velocity_on_the_boundary}) and (\ref{Hele_Shaw_velocity}) by
\begin{align*}
&	\int_{\Gamma^m_h} \nabla v^m_h : \nabla \varphi_h \; do
	= 0, \quad \textnormal{$\forall \varphi_h \in \overset{\circ}{V}_h(\Gamma_h^m)^n$,}
\\
&	\int_{\partial \Gamma^m_h} I_h( v_h^m \cdot \psi_h ) \; do
	= - \tfrac{1}{12} \int_{\partial \Gamma^m_h} 
	     \nabla \tilde{p}^m_h \cdot I_h ( \nu^m_h (\nu_h^m \cdot \psi_h))
	     + I_h( \nabla G_q \cdot \nu^m_h (\nu^m_h \cdot \psi_h))
		 \; do,
\end{align*}
for all $\psi_h \in V_h(\partial \Gamma_h^m)^2$. This gives us a base velocity $v_h^m$ for the motion of $\Gamma_h^m$.
We use this vector field in (\ref{equation_for_u_h}) of Algorithm \ref{algo_DeTurck}.
In Figures \ref{Hele_Shaw_figures} and \ref{hele_shaw_mesh_quality} we compare the simulation
based on Algorithm \ref{algo_DeTurck} to the method when the mesh vertices are just moved by 
$p_j = p_j + \tau v_h^m(p_j)$.
In both approaches we use Algorithm \ref{algo_refinement_and_coarsening_strategy} 
as mesh refinement and coarsening strategy again. 
The parameters used for the simulation were 
$\tau = 0.005 ~ h_{min}^2$, $\alpha= 1.0$, $T_{adapt} = 0.01$ and $\sigma = 10^{-3}$. 

We observe a far better mesh behaviour of the approach based
on the DeTurck reparametrization. Furthermore, the numerical solutions of both approaches
strongly differ for times $t \gtrsim 5.0$.
Since the solution based on the scheme without DeTurck redistribution has sharp corners at the boundary,
it must be rejected. In contrast, the solution obtained by Algorithm \ref{algo_DeTurck} satisfies the 
theoretical expectations -- including the formation of a cusp close to the sink.
\begin{figure}
\begin{center}
\subfloat[][\centering Computational mesh at $t=4.0$ with DeTurck redistribution.]
{\includegraphics[width=0.2\textwidth]{./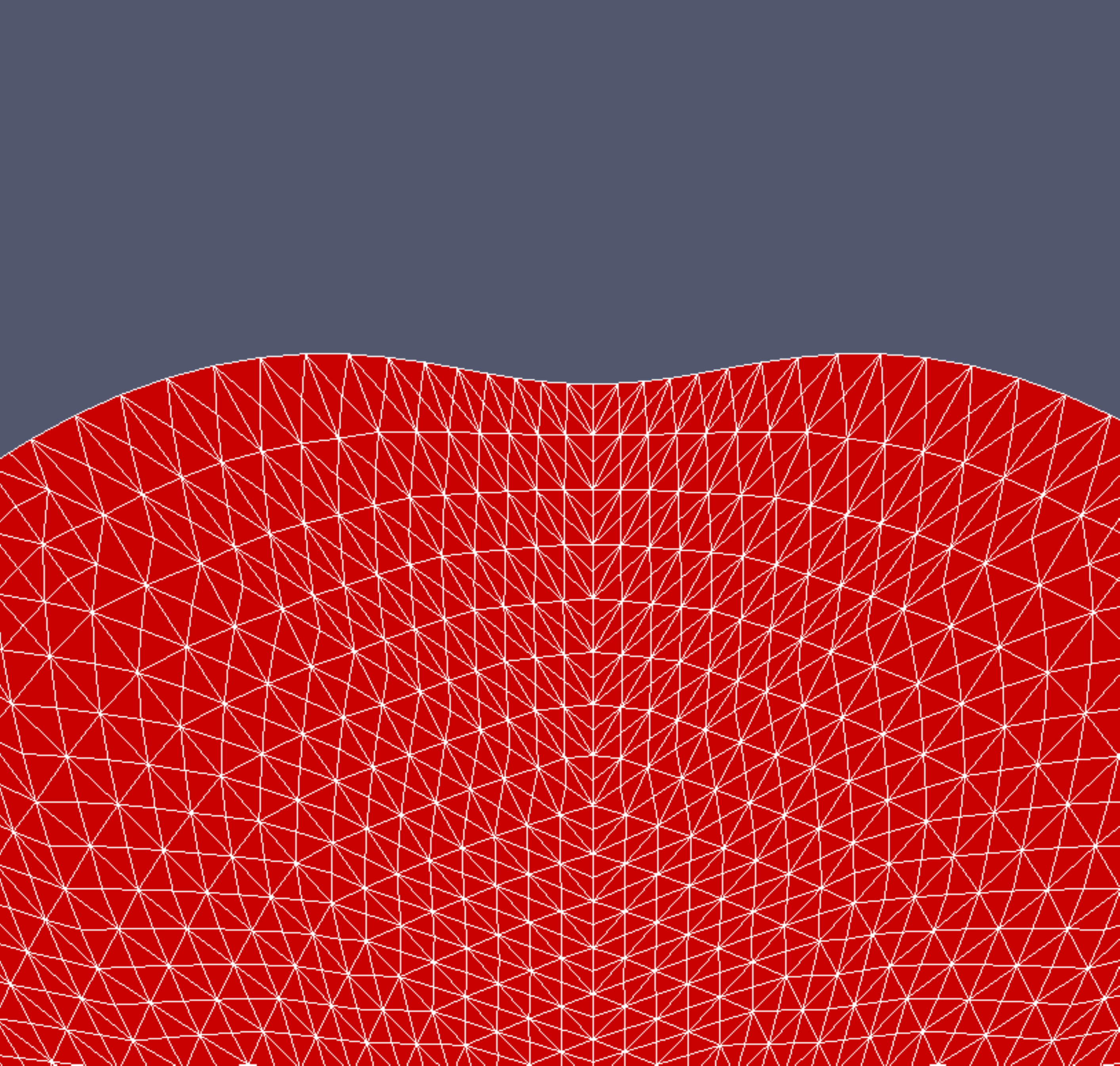}
\label{hele_shaw_with_DeT_at_time_4_00}
} 
~
\subfloat[][\centering Computational mesh at $t=5.0$ with DeTurck redistribution.]
{\includegraphics[width=0.2\textwidth]{./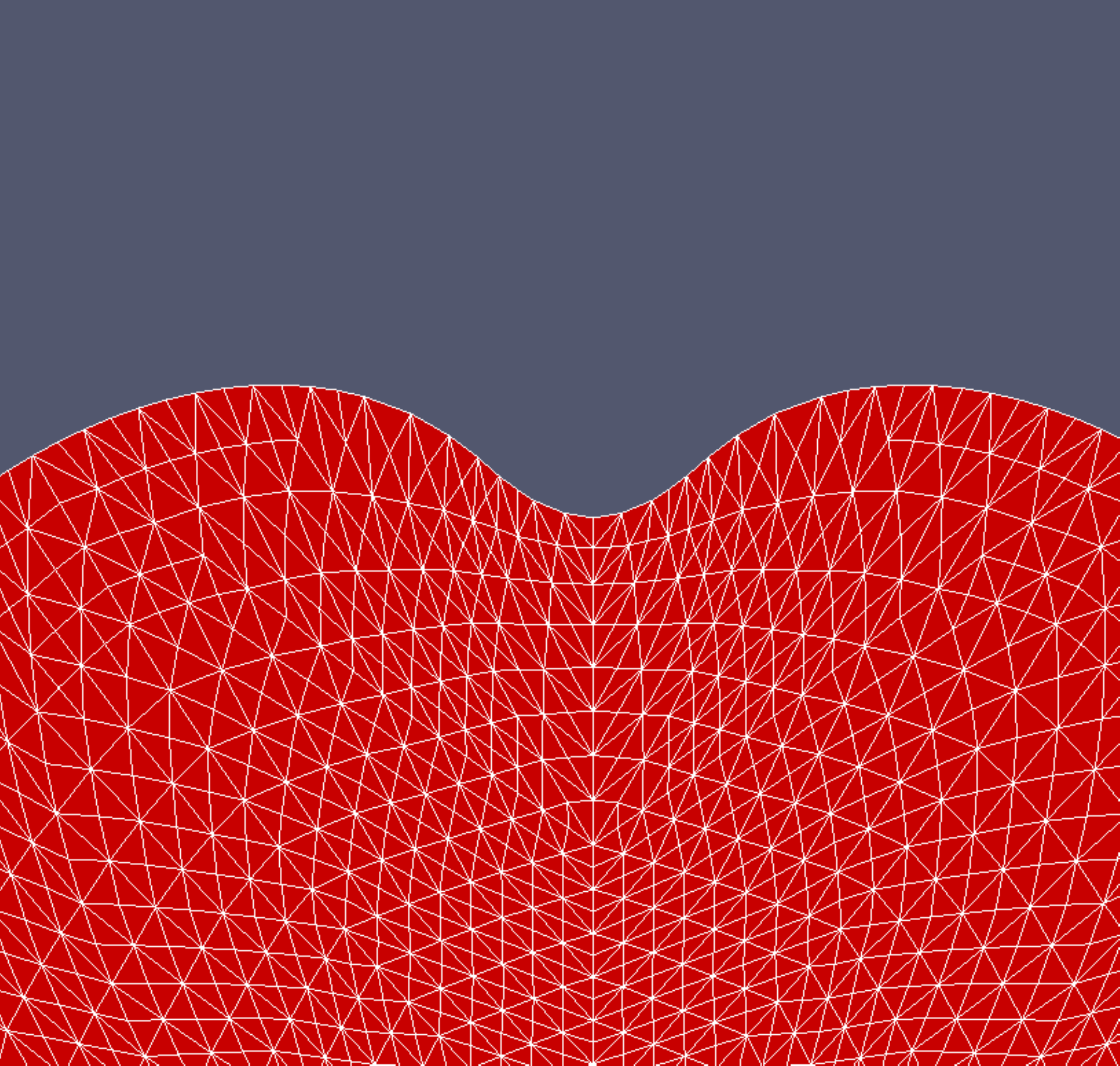}
\label{hele_shaw_with_DeT_at_time_5_00}
} 
~
\subfloat[][\centering Computational mesh at $t=5.2$ with DeTurck redistribution.]
{\includegraphics[width=0.2\textwidth]{./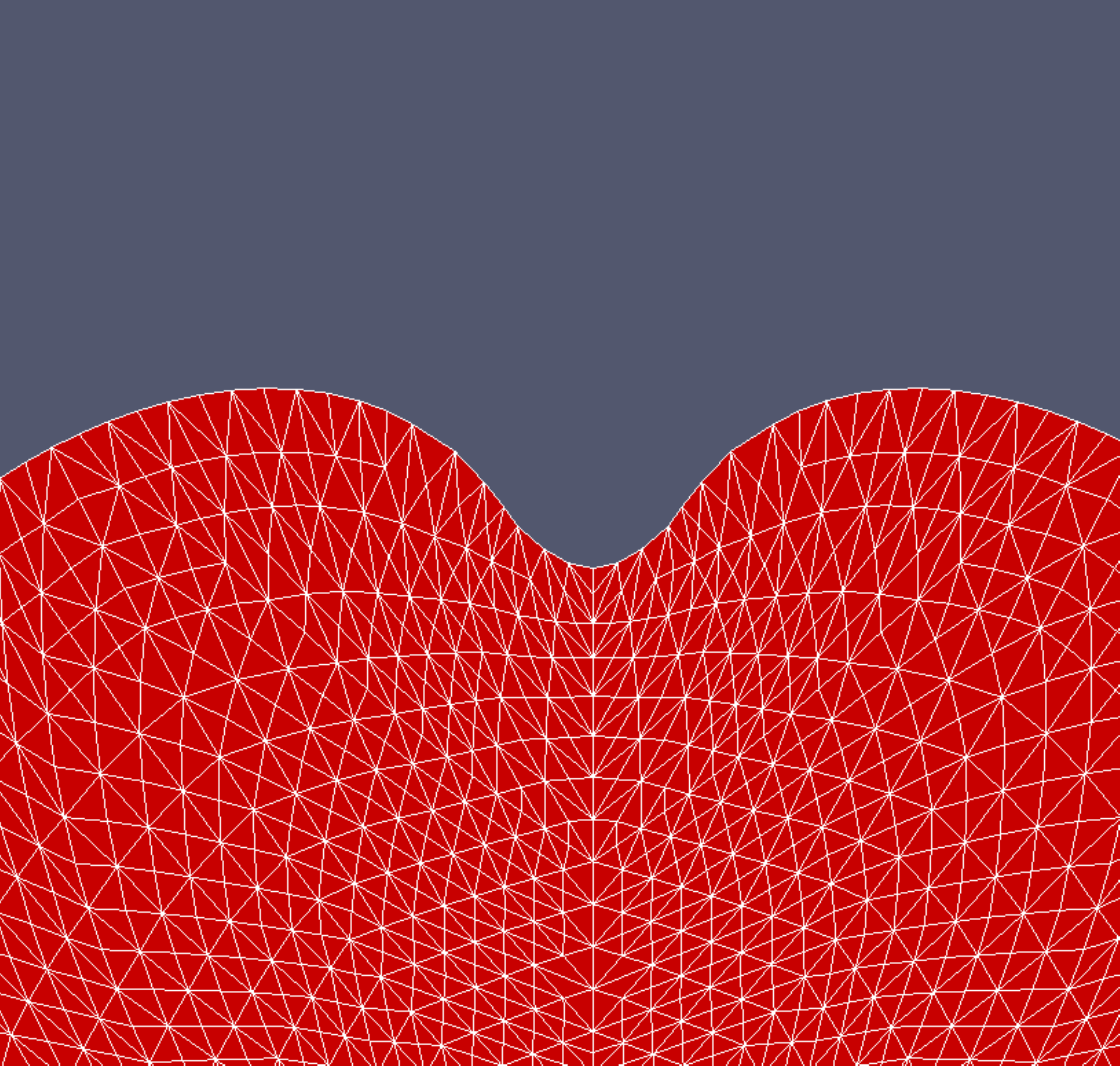}
\label{hele_shaw_with_DeT_at_time_5_20}
} 
~
\subfloat[][\centering Computational mesh at $t=5.34$ with DeTurck redistribution.]
{\includegraphics[width=0.2\textwidth]{./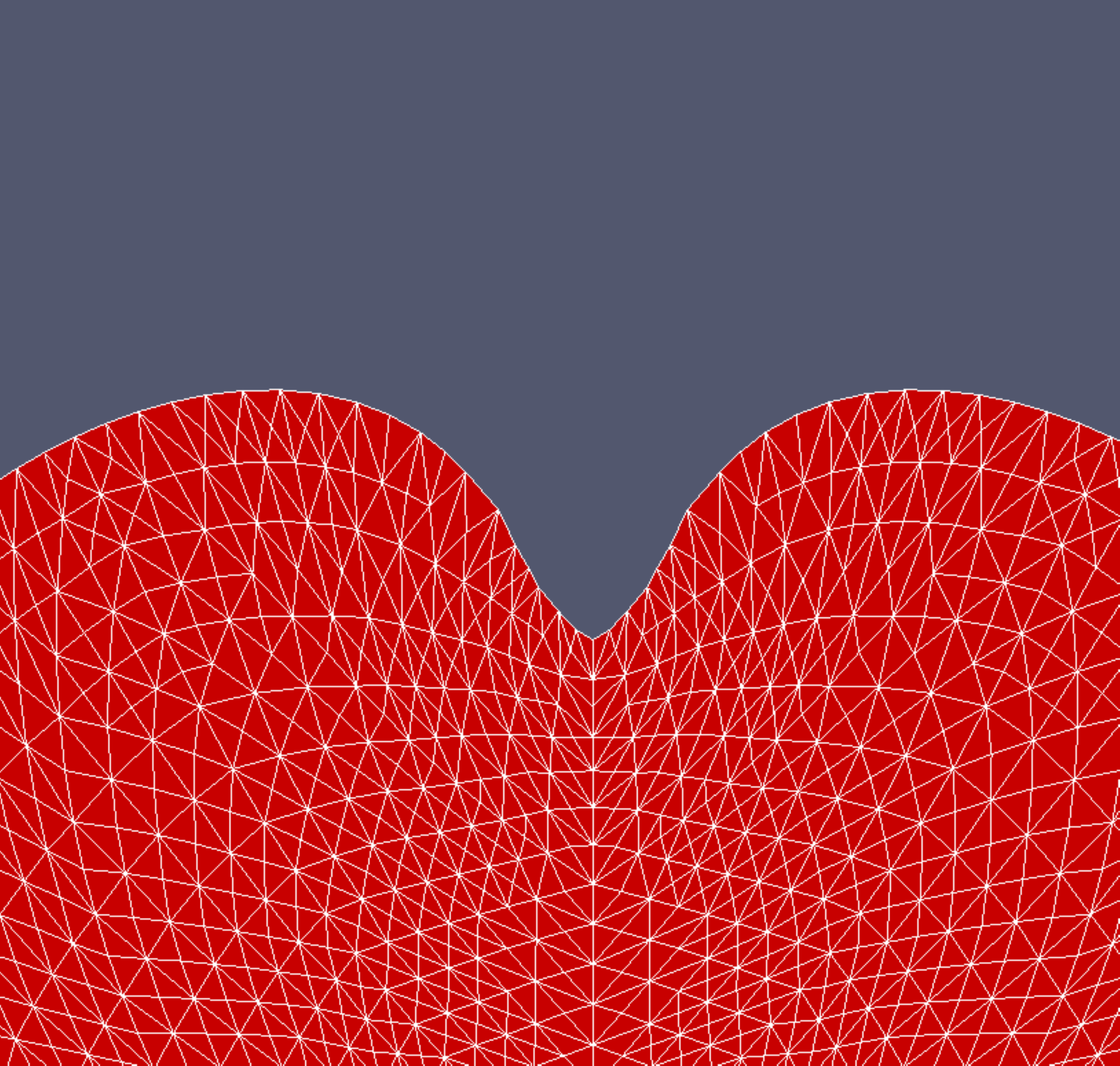}
\label{hele_shaw_with_DeT_at_time_5_34}
}
\\
\subfloat[][\centering Computational mesh at $t=4.0$ without redistribution.]
{\includegraphics[width=0.2\textwidth]{./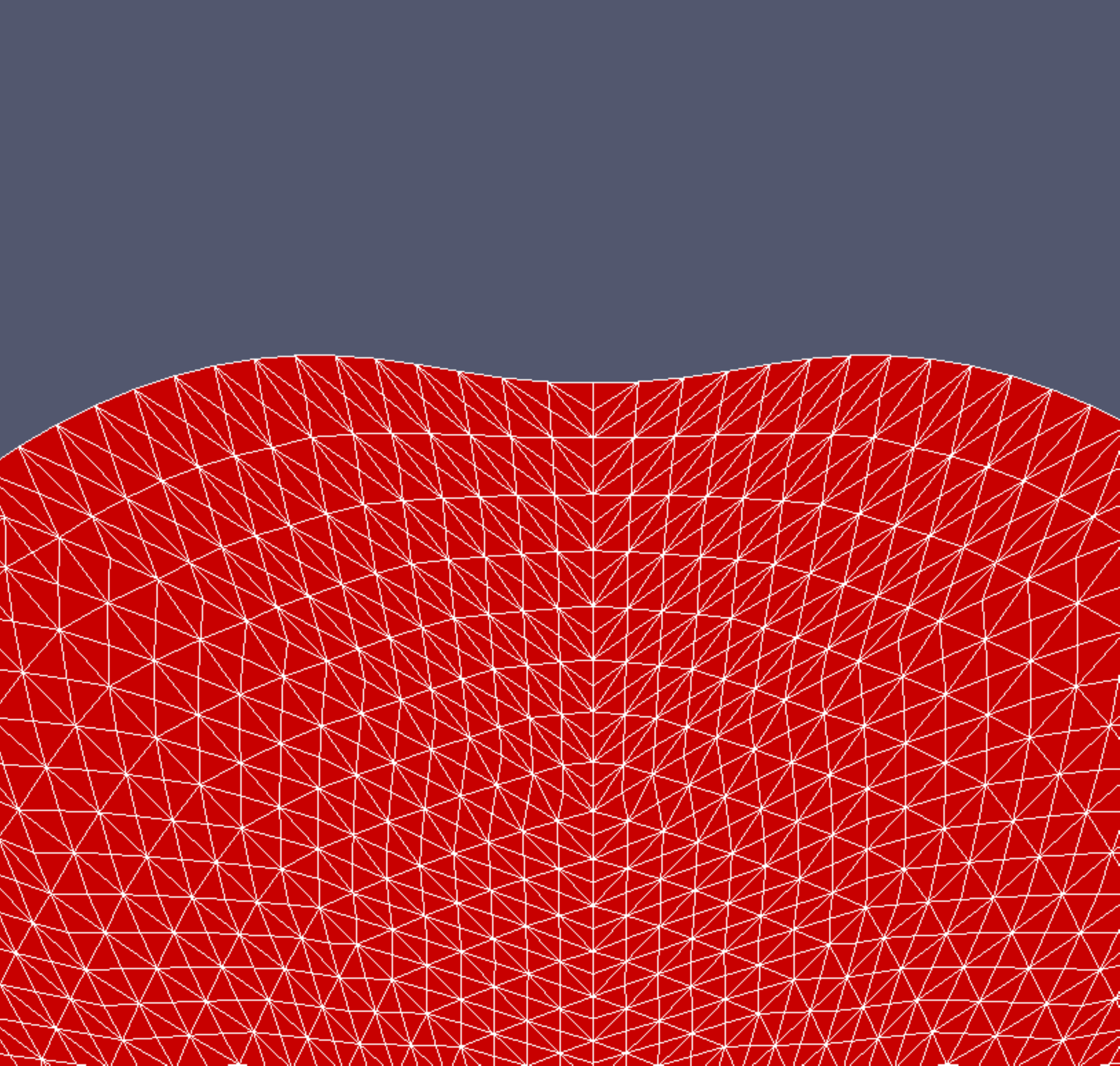}
\label{hele_shaw_without_DeT_at_time_4_00}
} 
~
\subfloat[][\centering Computational mesh at $t=5.0$ without redistribution.]
{\includegraphics[width=0.2\textwidth]{./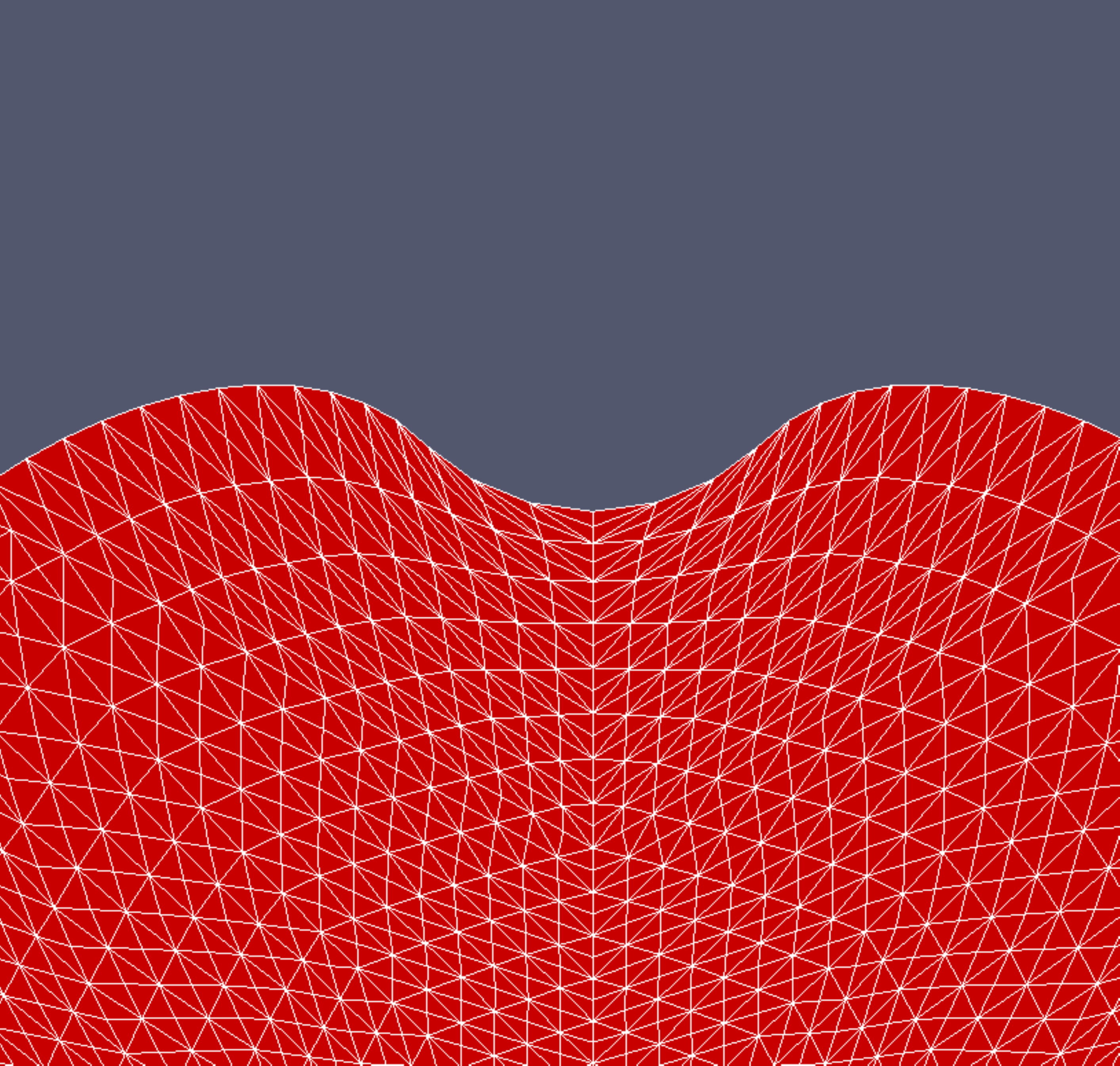}
\label{hele_shaw_without_DeT_at_time_5_00}
} 
~
\subfloat[][\centering Computational mesh at $t=5.2$ without redistribution.]
{\includegraphics[width=0.2\textwidth]{./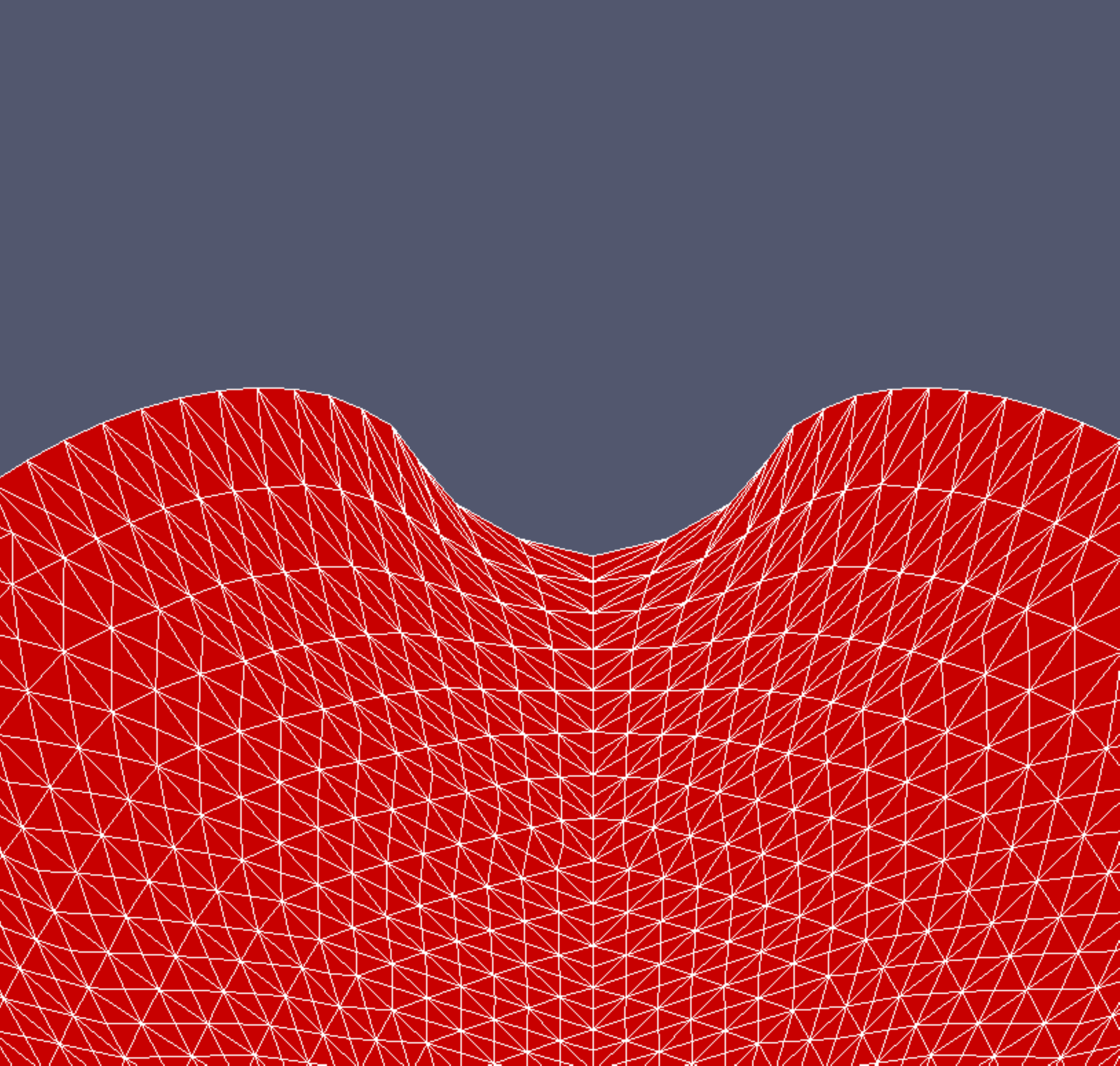}
\label{hele_shaw_without_DeT_at_time_5_20}
} 
~
\subfloat[][\centering Computational mesh at $t=5.34$ without redistribution.]
{\includegraphics[width=0.2\textwidth]{./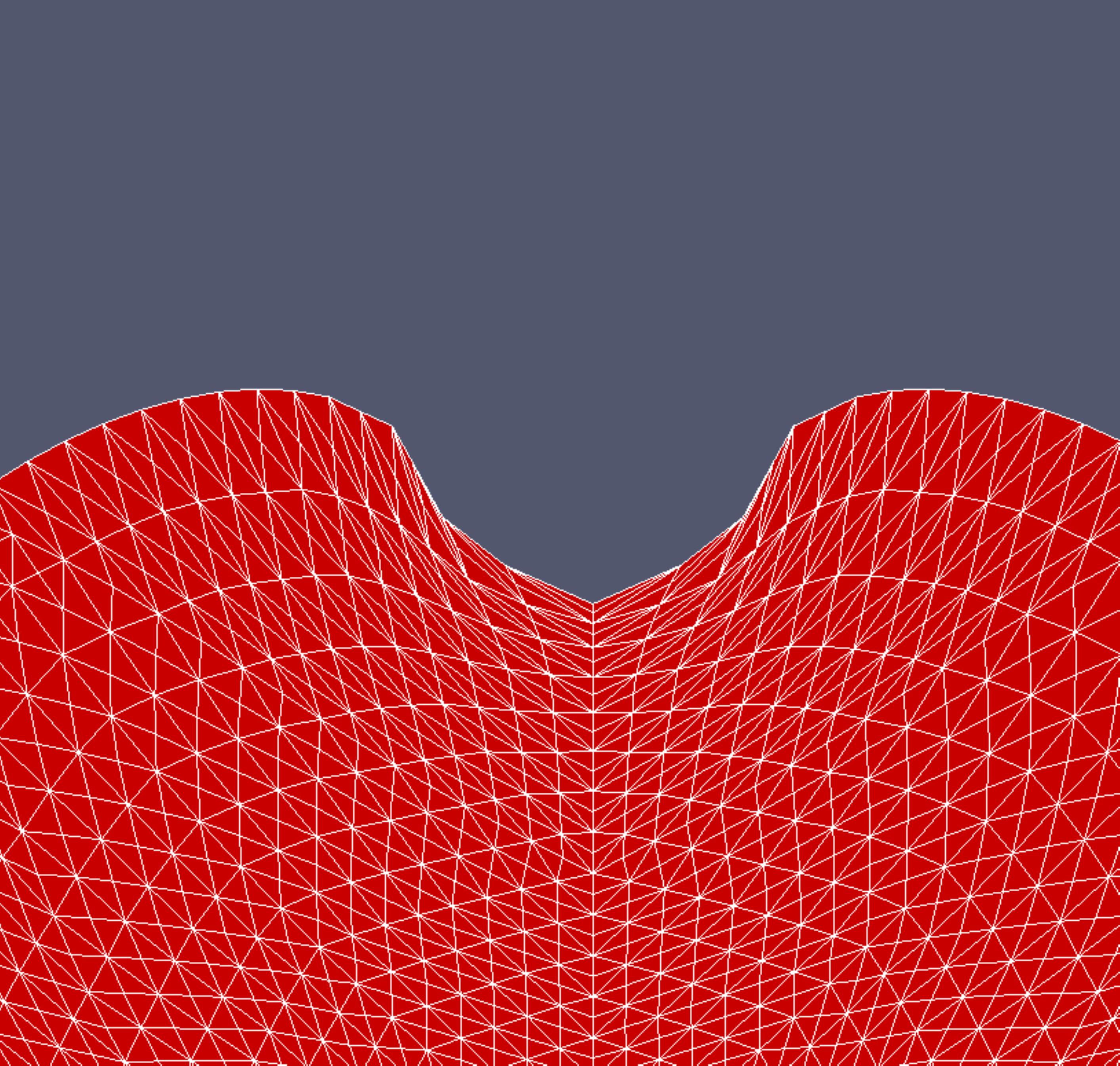}
\label{hele_shaw_without_DeT_at_time_5_34}
} 
\caption{Comparison of the mesh behaviour for a domain evolving according to the Hele-Shaw flow described in
  Example $4$. The initial shape is a unit disk. Figures \ref{hele_shaw_without_DeT_at_time_4_00} and 
  \ref{hele_shaw_without_DeT_at_time_5_34} show the results without redistribution of mesh vertices,
  whereas in Figures \ref{hele_shaw_with_DeT_at_time_4_00} to 
  \ref{hele_shaw_with_DeT_at_time_5_34} the application of Algorithm \ref{algo_DeTurck} is presented.
  In Figure \ref{hele_shaw_without_DeT_at_time_5_34} the mesh is degenerated
  (see also Figure \ref{hele_shaw_mesh_quality}) and its boundary has sharp corners.
  This observation suggests that the numerical result in Figure \ref{hele_shaw_with_DeT_at_time_5_34}
  is a much better approximation of the solution to the Hele-Shaw flow
  at time $t=5.34$. See Example $4$ for more details.
  }
\label{Hele_Shaw_figures} 
\end{center}
\end{figure}

\gdef\gplbacktext{}%
\gdef\gplfronttext{}%
\begin{figure}
\begin{center}
  \begin{picture}(5668.00,3400.00)%
    \gplgaddtomacro\gplbacktext{%
      \csname LTb\endcsname%
      \put(660,110){\makebox(0,0)[r]{\strut{} 5}}%
      \csname LTb\endcsname%
      \put(660,473){\makebox(0,0)[r]{\strut{} 10}}%
      \csname LTb\endcsname%
      \put(660,836){\makebox(0,0)[r]{\strut{} 15}}%
      \csname LTb\endcsname%
      \put(660,1199){\makebox(0,0)[r]{\strut{} 20}}%
      \csname LTb\endcsname%
      \put(660,1562){\makebox(0,0)[r]{\strut{} 25}}%
      \csname LTb\endcsname%
      \put(660,1926){\makebox(0,0)[r]{\strut{} 30}}%
      \csname LTb\endcsname%
      \put(660,2289){\makebox(0,0)[r]{\strut{} 35}}%
      \csname LTb\endcsname%
      \put(660,2652){\makebox(0,0)[r]{\strut{} 40}}%
      \csname LTb\endcsname%
      \put(660,3015){\makebox(0,0)[r]{\strut{} 45}}%
      \csname LTb\endcsname%
      \put(660,3378){\makebox(0,0)[r]{\strut{} 50}}%
      \csname LTb\endcsname%
      \put(792,-110){\makebox(0,0){\strut{} 0}}%
      \csname LTb\endcsname%
      \put(1693,-110){\makebox(0,0){\strut{} 1}}%
      \csname LTb\endcsname%
      \put(2593,-110){\makebox(0,0){\strut{} 2}}%
      \csname LTb\endcsname%
      \put(3494,-110){\makebox(0,0){\strut{} 3}}%
      \csname LTb\endcsname%
      \put(4394,-110){\makebox(0,0){\strut{} 4}}%
      \csname LTb\endcsname%
      \put(5295,-110){\makebox(0,0){\strut{} 5}}%
      \put(22,1744){\rotatebox{90}{\makebox(0,0){\strut{}$\sigma_{max}$}}}%
      \put(3223,-440){\makebox(0,0){\strut{}Time}}%
    }%
    \gplgaddtomacro\gplfronttext{%
      \csname LTb\endcsname%
      \put(4668,3205){\makebox(0,0)[r]{\strut{}with DeTurck}}%
      \csname LTb\endcsname%
      \put(4668,2985){\makebox(0,0)[r]{\strut{}without DeTurck}}%
    }%
    \gplbacktext
    \put(0,0){\includegraphics{./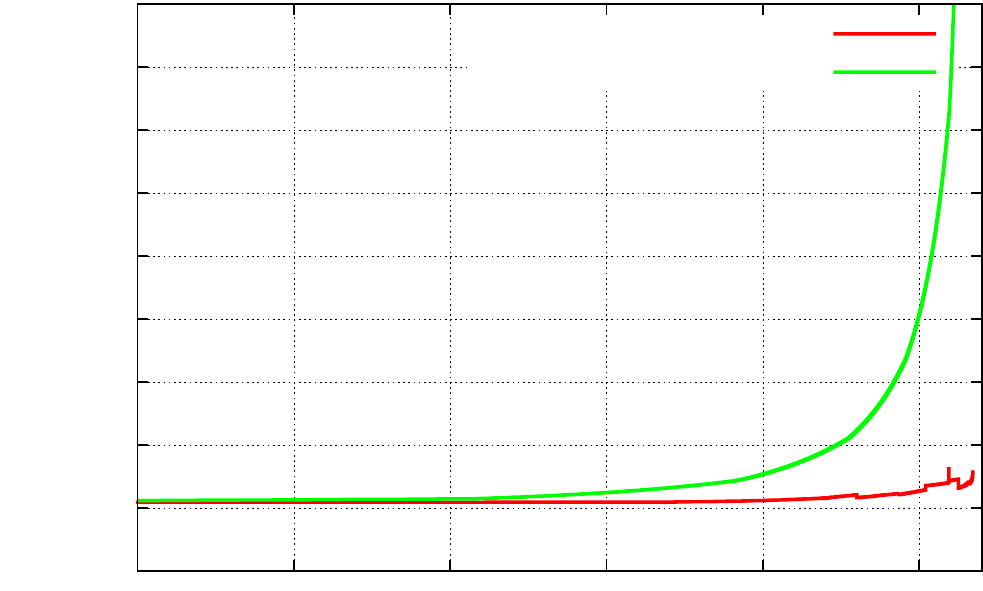}}%
    \gplfronttext
  \end{picture}%
  \vspace*{10mm}
  \caption{Mesh quality $\sigma_{max}$, see (\ref{definition_sigma_max}), for the mesh in Figure 
  \ref{Hele_Shaw_figures}. Without redistribution of vertices, the mesh degenerates when the domain 
  starts to form a cusp. This suggests that the numerical result in Figures \ref{hele_shaw_with_DeT_at_time_4_00}
  to \ref{hele_shaw_with_DeT_at_time_5_34} is a far better approximation to the solution of the Hele-Shaw flow
  than the result in Figures \ref{hele_shaw_without_DeT_at_time_4_00} to \ref{hele_shaw_without_DeT_at_time_5_34}.
  See Example $4$ for more details.  
  }
  \label{hele_shaw_mesh_quality}
\end{center}
\end{figure}

\subsection*{Example 5: The ALE ESFEM and the DeTurck trick for solving PDEs on evolving surfaces}
In the last example, we present how the method proposed in this paper can be used in the ALE ESFEM
introduced in \cite{ES12, EllVen15}. We 
consider the advection-diffusion equation
\begin{align}
	& \partial^\circ p + p \nabla_{\Gamma(t)} \cdot v  
	- \nabla_{\Gamma(t)} \cdot (D \nabla_{\Gamma(t)} p)  = f,
	\quad \textnormal{in $\Gamma(t)$,}
	\label{reaction_diffusion_equation}
\\	
   	& \nu(t) \cdot \nabla_{\Gamma(t)} p = 0, 
   	\quad \textnormal{on $\Gamma(t)$.}	
   	\label{BC_reaction_diffusion_equation}
\end{align}
Here, $D > 0$ denotes a constant scalar diffusivity. $v: \Gamma(t) \rightarrow \mathbb{R}^n$
is the velocity field of the medium in which the diffusion process takes place.
The medium is supposed to be contained in $\Gamma(t)$. More precisely, we assume that $\Gamma(t)$ also moves with velocity $v$.
We here choose $\Gamma(0) = B_{r_1}(0) \setminus B_{r_2}(0) \subset \mathbb{R}^2$ with $r_1 = 2.25$ and $r_2 = 0.25$,
and furthermore,
$$
	v(x_1,x_2) = (-7 \sin(2\pi t)(1 - \tfrac{16}{81}(x_1^2 + x_2^2)),
				   7 \cos(2\pi t)(1 - \tfrac{16}{81}(x_1^2 + x_2^2)))^T
$$ 
for all $(x_1, x_2) \in \Gamma(t)$. 
The material derivative $\partial^\circ p$ is defined by 
$$
	(\partial^\circ p) \circ x := \frac{d}{dt} (p \circ x), 
$$
where $x$ is supposed to be the embedding, whose time derivative is described by $v$; see (\ref{equation_of_motion_non-reparametrized}).
If $p$ is differentiable in an open neighbourhood of $\Gamma(t) \subset \mathbb{R}^n$, 
we obviously obtain
$$
	\partial^\circ p = p_t + v \cdot \nabla p.
$$
Similarly, the derivative $\partial^\bullet$ defined in (\ref{material_derivative_definition}) satisfies 
$$
	\partial^\bullet p =  p_t + \hat{v} \cdot \nabla p.
$$
Hence, we have
\begin{align}
	\partial^\circ p - \partial^\bullet p  = (v - \hat{v}) \cdot \nabla p
	= (v - \hat{v}) \cdot \nabla_{\Gamma(t)} p.
	\label{difference_material_derivatives}
\end{align}
Multiplying (\ref{reaction_diffusion_equation}) by a test function $\varphi(t) \in H^{1,2}(\Gamma(t))$, 
integrating and applying the transport formula, see Theorem 5.1 in \cite{DE13}, yields
\begin{align*}
	\frac{d}{dt} \int_{\Gamma(t)} p \varphi \; do 
	+ D \int_{\Gamma(t)} \nabla_{\Gamma(t)} p \cdot \nabla_{\Gamma(t)} \varphi \; do
	= \int_{\Gamma(t)} p \partial^\circ \varphi  + f \varphi \; do,
	\quad \forall \varphi \in H^{1,2}(\Gamma(t)). 
\end{align*}
Assuming that we only consider test functions $\varphi(t)$ with $\partial^\bullet \varphi =0$
and using (\ref{difference_material_derivatives}), this leads to
\begin{align*}
	\frac{d}{dt} \int_{\Gamma(t)} p \varphi \; do 
	+ D \int_{\Gamma(t)} \nabla_{\Gamma(t)} p \cdot \nabla_{\Gamma(t)} \varphi \; do
	= \int_{\Gamma(t)}  p (v - \hat{v}) \cdot \nabla_{\Gamma(t)} \varphi + f \varphi \; do,
	\quad \forall \varphi \in H^{1,2}(\Gamma(t)). 
\end{align*}
Motivated by the work in \cite{ES12}, we thus define $p_h^{m+1} \in V_h(\Gamma_h^{m+1})$ to be the solution of
\begin{align}
    & \int_{\Gamma^{m+1}_h} \tfrac{1}{\tau} I_h (p_h^{m+1} \varphi^{m+1}_h) 
    + D \nabla_{\Gamma^{m+1}_h} p^{m+1}_h \cdot \nabla_{\Gamma^{m+1}_h} \varphi_h^{m+1} 
    + I_h(p_h^{m+1} v_{DeT,h}^{m+1, \beta}) (\nabla_{\Gamma^{m+1}_h} \varphi^{m+1}_h)_{\beta} \; do 
    \nonumber
   \\ 
    &=  \int_{\Gamma^m_h} \tfrac{1}{\tau} I_h (p_h^m \varphi_h^m) \; do 
         + \int_{\Gamma^{m+1}_h}  I_h(f^{m+1} \varphi_h^{m+1}) \; do,
\label{ALE_scheme}    
\end{align}
for all $\varphi_h^{m+1} \in V_h(\Gamma^{m+1}_h)$,
where $\Gamma^{m+1}_h$ is computed according to Algorithm \ref{algo_DeTurck}
and $\varphi^{m+1}_h \in V_h(\Gamma^{m+1}_h)$ is such that it has the same coefficients
with respect to the Lagrange basis functions of $V_h(\Gamma^{m+1}_h)$ as 
$\varphi^{m}_h \in V_h(\Gamma^{m}_h)$ has with respect to the Lagrange basis functions of 
$V_h(\Gamma^{m}_h)$; see \cite{ES12} for more details. 
The vector field $v^{m+1}_{DeT,h} \in V_h(\Gamma^{m+1}_h)^n$
is defined by $v^{m+1}_{DeT,h} \circ u^{m+1}_h := \tfrac{1}{\tau} (u^{m+1}_h - \tilde{u}_h^m) - I_h v^m$. 
Here we aim to approximate the solution
\begin{align}
p(x,t) =  \cos(2 \pi t) \exp(-|x|^2)
\label{solution_ALE_heat}
\end{align}
of (\ref{reaction_diffusion_equation}) and (\ref{BC_reaction_diffusion_equation}) for the right hand side
\begin{align*}
 f(x,t) = \left( \cos(2\pi t)(2 v_0 \cdot x - 2 v \cdot x + 4D (1 - |x|^2)) 
 	 - 2 \pi \sin(2 \pi t)
 \right) exp(- |x|^2),
\end{align*}
where $v_0 := (\tfrac{112}{81}\sin(2\pi t) , - \tfrac{112}{81} \cos(2\pi t))^T$.
The numerical result of (\ref{ALE_scheme}) for the parameters $\tau = 0.001~ h_{min}^2$, $\alpha = 0.1$, $T_{adapt} = 10^{-3}$ and $D = 2.0$ is presented in Figure \ref{ALE_figures}.
\begin{figure}
\begin{center}
\subfloat[][\centering $p_h$ at $t=0.0$.]
{\includegraphics[width=0.266\textwidth]{./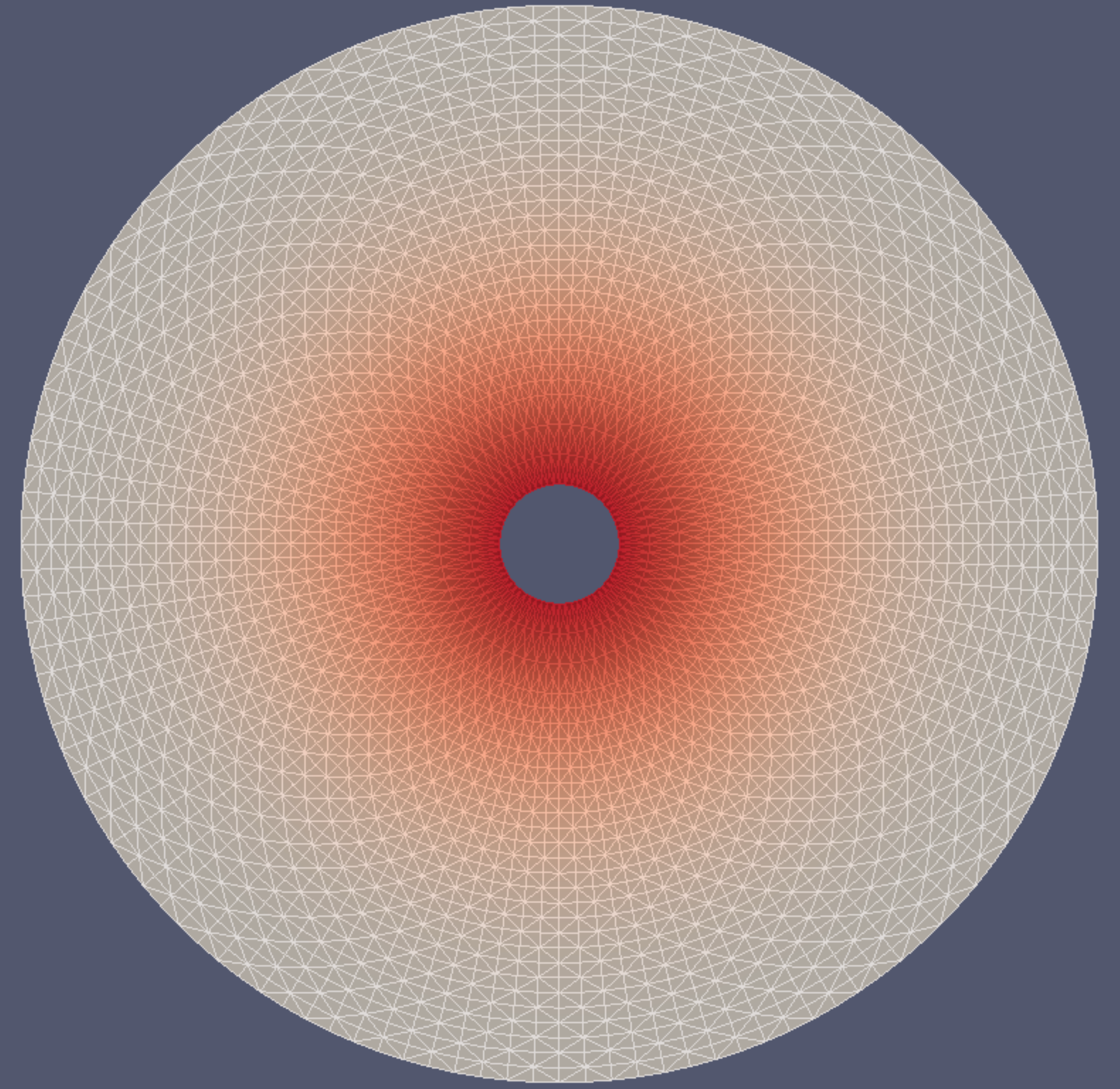}
\label{ALE_heat_at_time_0_00}
} 
~~
\subfloat[][\centering $p_h$ at $t=0.25$.]
{\includegraphics[width=0.266\textwidth]{./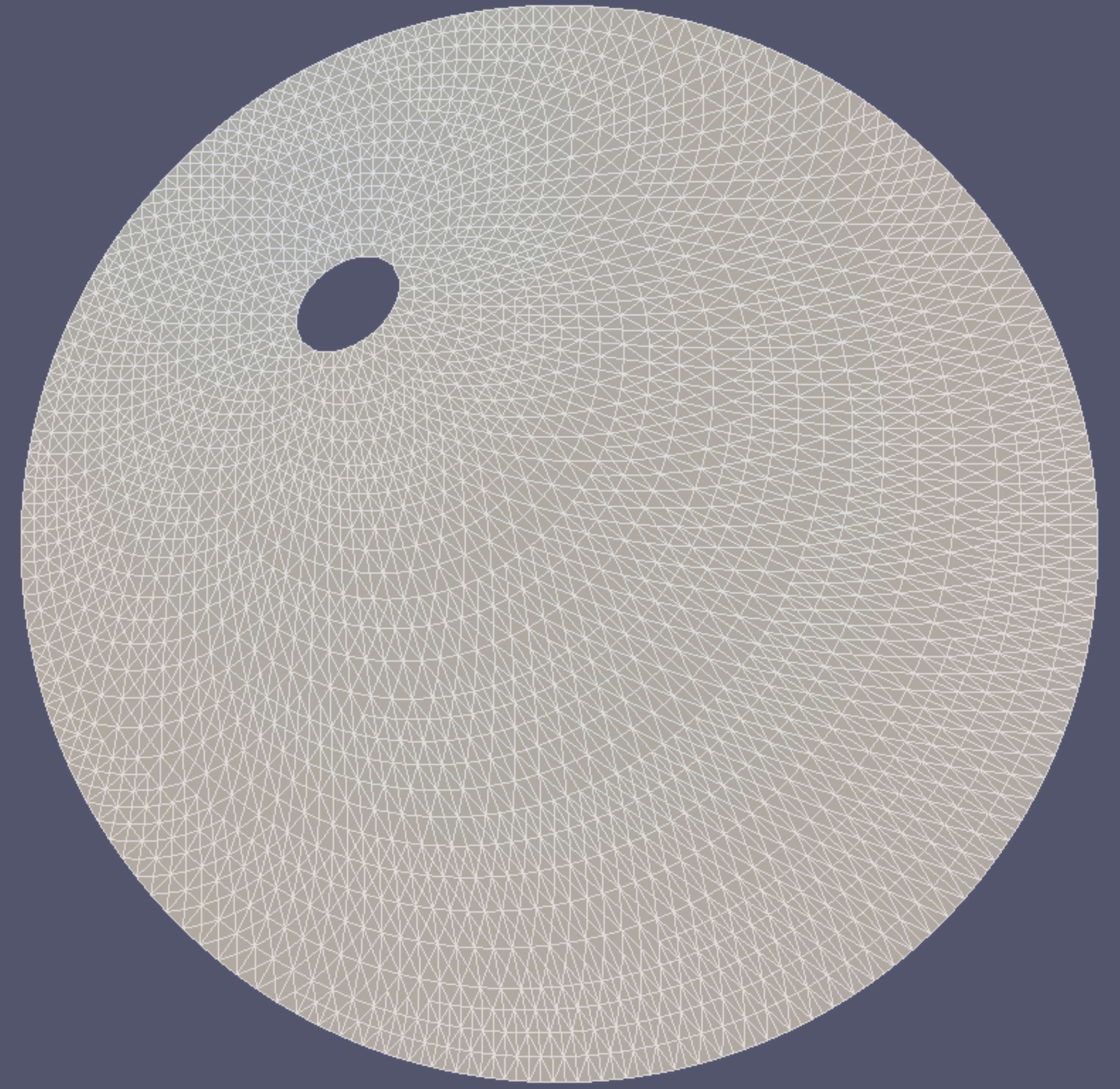}
\label{ALE_heat_at_time_0_25}
} 
~~
\subfloat[][\centering $p_h$ at $t=0.5$.]
{\includegraphics[width=0.266\textwidth]{./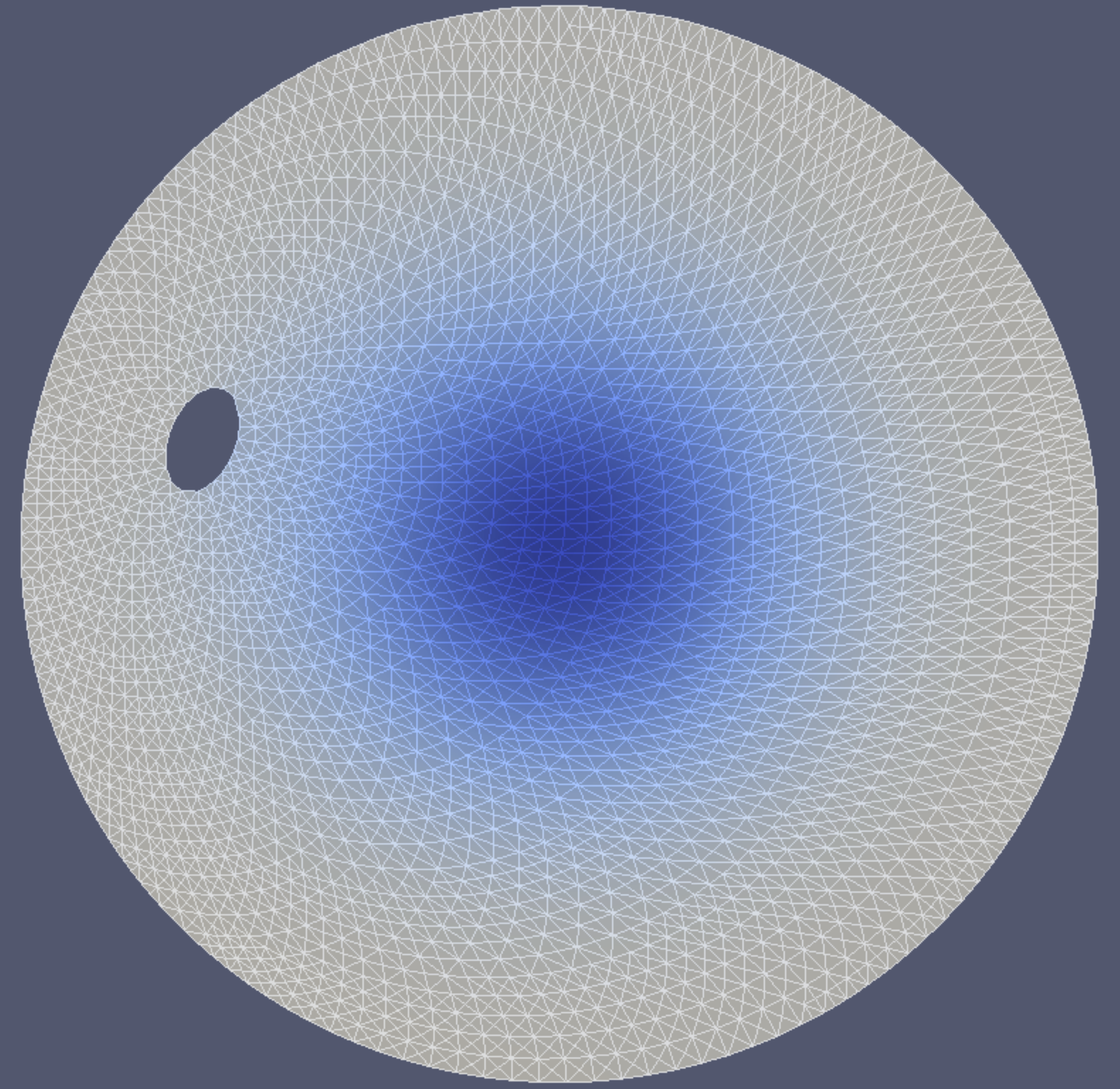}
\label{ALE_heat_at_time_0_50}
} 
\\
\subfloat[][\centering $p_h$ at $t=0.75$.]
{\includegraphics[width=0.266\textwidth]{./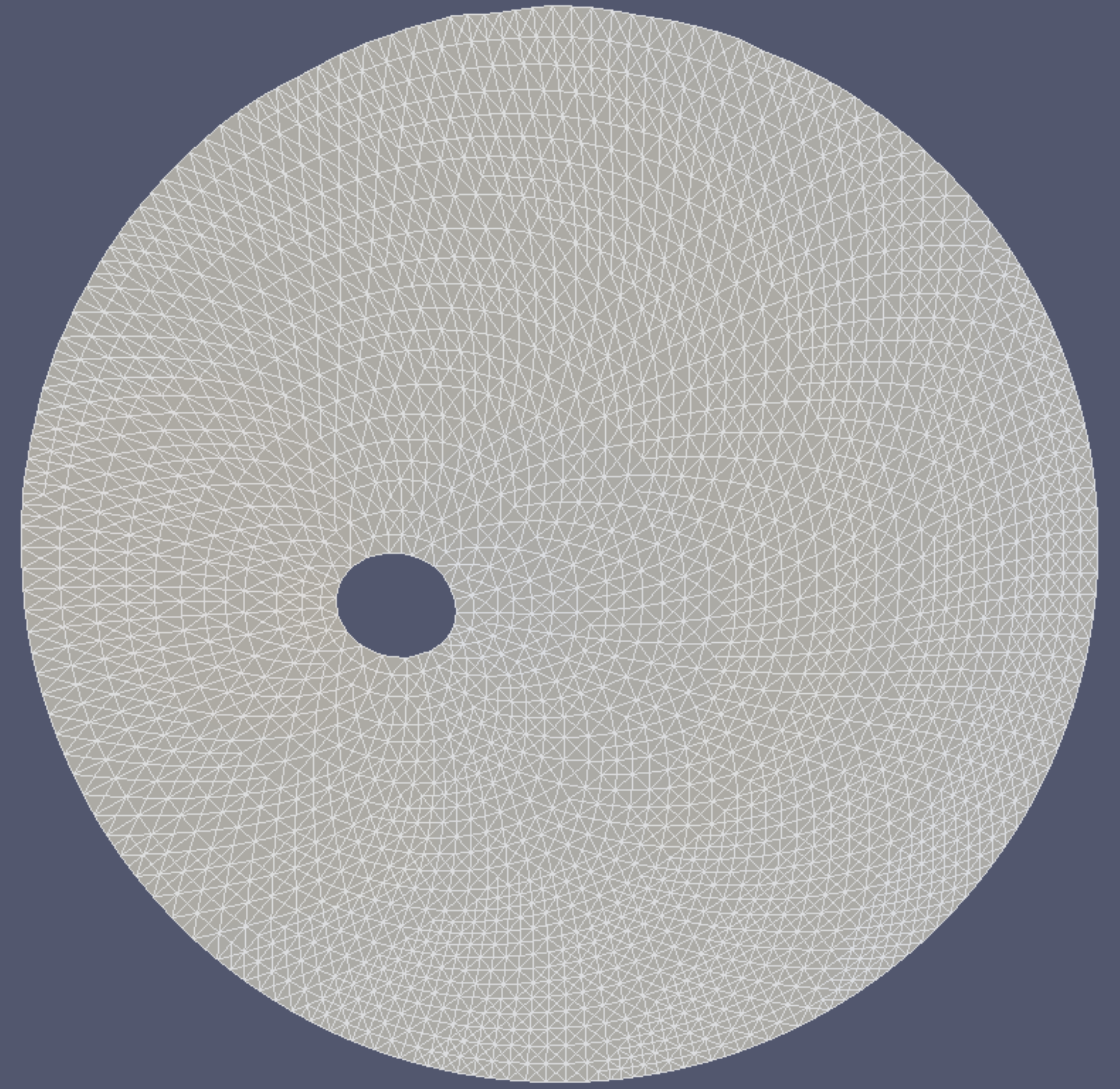}
\label{ALE_heat_at_time_0_75}
} 
~~
\subfloat[][\centering $p_h$ at $t=1.0$.]
{\includegraphics[width=0.266\textwidth]{./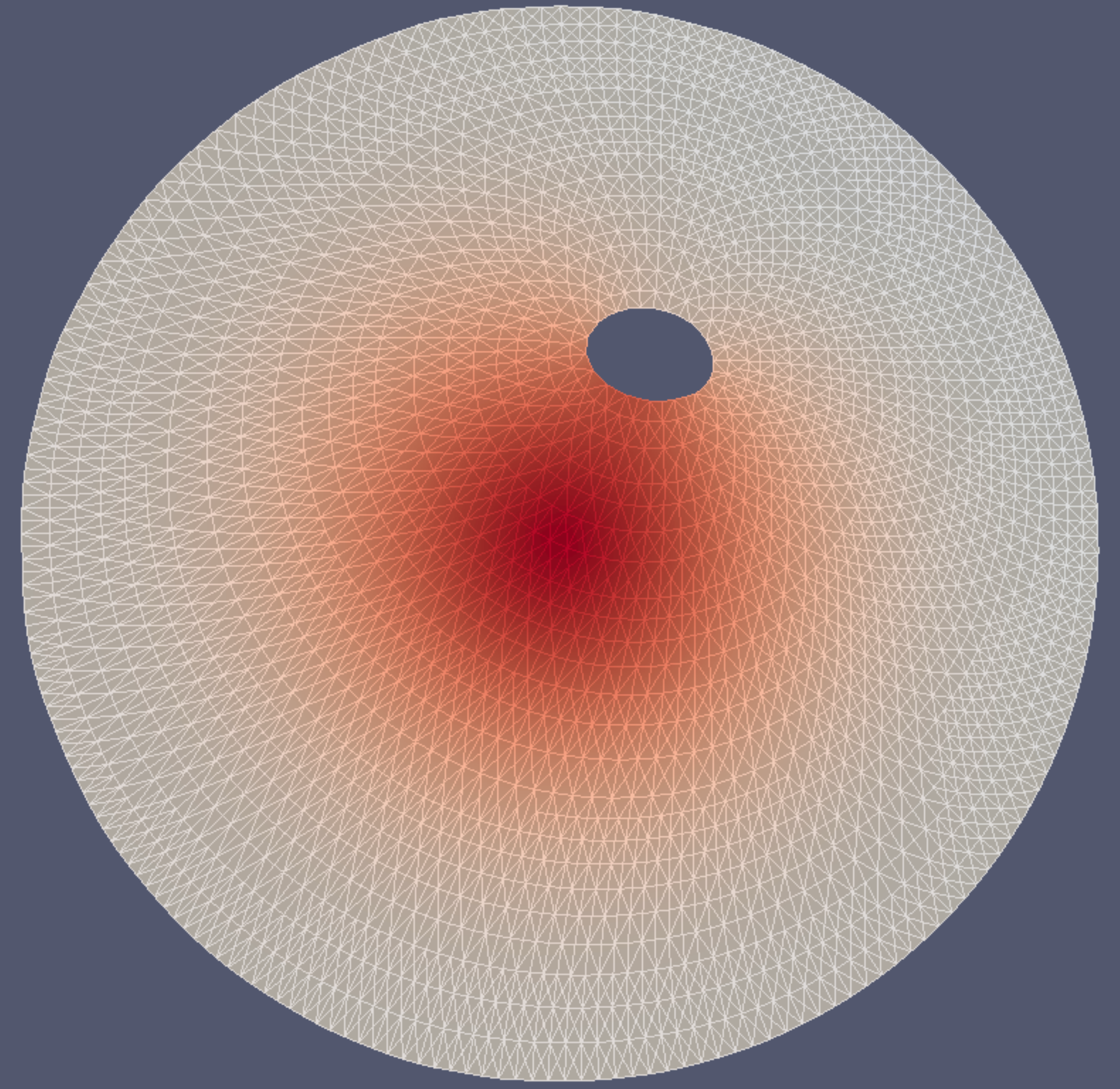}
\label{ALE_heat_at_time_1_00}
} 
\caption{Numerical approximation of the advection-diffusion equation (\ref{reaction_diffusion_equation}) 
on a moving domain $\Gamma(t)$. 
The mesh of the domain is deformed 
according to Algorithms \ref{algo_DeTurck} and \ref{algo_refinement_and_coarsening_strategy}.
The numerical solution $p_h^m$ of (\ref{ALE_scheme}) is indicated by the colour 
scheme red $\approx 1$, grey $\approx 0$ and blue $\approx -1$.
The true solution is given in (\ref{solution_ALE_heat}).
This example demonstrates how Algorithm \ref{algo_DeTurck} in combination with the ALE-method in \cite{ES12}
can be used to solve a PDE on a moving submanifold with boundary.
For more details see Example $5$.
}
\label{ALE_figures}
\end{center}
\end{figure}

\section{Discussion}
\label{section_discussion}
We think that the remeshing approach proposed in this paper has the potential to be very useful for many applications
with moving boundaries. Since it is based on the discretization of a PDE, it would, in principle, be
possible to use standard techniques of Numerical Analysis to estimate discretization errors
connected with this method. This is certainly one of the advantages of our approach.
In our numerical experiments, we observed that our scheme can improve the mesh quality
of a given mesh or preserve the mesh quality for a moving mesh
provided that the reference mesh is of sufficiently high quality.

We also observed that Algorithm \ref{algo_DeTurck} tends to deform 
$\Gamma_h^m$ to a mesh with simplices of different size but similar shape
as those of the reference mesh $\M_h$. 
This behaviour is due to the fact that Algorithm \ref{algo_DeTurck} is based on the DeTurck trick.
Under certain conditions, the harmonic map heat flow converges to a harmonic map as time tends to infinity,
see \cite{Ham75}.
For a stationary submanifold $\Gamma$ this would mean that the mesh $\Gamma_h$ is the image
of the reference mesh $\M_h$ under an approximation to the inverse of a harmonic map.
On the other hand, harmonic maps between compact orientable surfaces are known to be 
conformal maps under certain conditions, see \cite{EW76}.
It is therefore not surprising that in our experiments the triangles of the computational mesh and of the reference mesh often
seem to have similar angles. A side effect of this behaviour is that the area of the simplices of $\Gamma_h$ 
tends to decrease or increase non-homogeneously.
The easiest way to take this into account is to apply a refinement and 
coarsening strategy like in Algorithm \ref{algo_refinement_and_coarsening_strategy}.
Fortunately, as we have seen in Section \ref{section_numerical_results}, the mesh quality
is not critically affected by the mesh refinement or coarsening
-- in particular, because the refinement and coarsening procedure only changes the mesh quality locally.
Since the time step size $\tau$ of the scheme depends critically on the time scale of the remeshing procedure,
that is on $\alpha$,
it would be advantageous to have a strategy for finding the optimal parameter $\alpha$.
Here, optimal means that $\alpha$ should be as large as possible, since this enables larger time steps
$\tau$, and at the same time sufficiently small to ensure a good mesh quality for all times.
This issue remains open for future research.

\subsection*{Acknowledgements}
The second author would like to thank the Alexander von Humboldt Foundation, Germany, for their financial
support by a Feodor Lynen Research Fellowship in collaboration with the University of Warwick,
UK.

\end{document}